\begin{document}

\newcommand{\co}{\mathbb{C}}
\newcommand{\incl}[1]{i_{U_{#1}-Q_{#1},V_{#1}-P_{#1}}}
\newcommand{\inclu}[1]{i_{V_{#1}-P_{#1},\co-P}}
\newcommand{\func}[3]{#1\colon #2 \rightarrow #3}
\newcommand{\norm}[1]{\left\lVert#1\right\rVert}
\newcommand{\norma}[2]{\left\lVert#1\right\rVert_{#2}}
\newcommand{\hiper}[3]{\left \lVert#1\right\rVert_{#2,#3}}
\newcommand{\hip}[2]{\left \lVert#1\right\rVert_{U_{#2} - Q_{#2},V_{#2} - P_{#2}}}
\newtheorem{lem}{Lemma}[section]
\newtheorem{defi}{Definition}[section]
\newtheorem{rem}{Remark}[section]
\newtheorem{mle}{Main Lemma}[section]
\newtheorem{mth}{Main Theorem}
\newtheorem{thm}{Theorem}[section]
\newtheorem{theorem}{Theorem}[section]
\newtheorem{lemma}[theorem]{Lemma}
\newtheorem{e-proposition}[theorem]{Proposition}
\newtheorem{corollary}[theorem]{Corollary}
\newtheorem{e-definition}[theorem]{Definition\rm}
\newtheorem{remark}{\it Remark\/}
\newtheorem{example}{Example}

\newtheorem{cor}{Corollary}[section]
\newtheorem{pro}{Proposition}[section]
\newtheorem{conj}{Conjecture}

\topmargin -5.4cm \headheight 0cm \headsep 6.0cm \textheight 22.2cm
\textwidth 16.0cm \oddsidemargin 0.7cm \evensidemargin 0.7cm
\footskip 1.5cm



\def\A{{\mathbb A}}
\def\B{{\mathbb B}}
\def\F{{\mathbb F}}
\def\K{{\mathbb K}}
\def\M{{\mathbb M}}
\def\N{{\mathbb N}}
\def\bbO{{\mathbb O}}
\def\bP{{\mathbb P}}
\def\re{{\mathbb R}}
\def\T{{\mathbb T}}
\def\U{{\mathbb U}}
\def\na{{\mathbb N}}

\def\cA{{\mathbb A}}
\def\cB{{\mathcal B}}
\def\cC{{\mathcal C}}
\def\cD{{\mathcal D}}
\def\cF{{\mathcal F}}
\def\cG{{\mathcal G}}
\def\cK{{\mathcal K}}
\def\cM{{\mathcal M}}
\def\cO{{\mathcal O}}
\def\cQ{{\mathcal Q}}
\def\cR{{\mathcal R}}
\def\cS{{\mathcal S}}
\def\cU{{\mathcal U}}

\def\fB{{\mathfrak B}}
\def\fR{{\mathfrak R}}
\def\fS{{\mathfrak S}}

\def\a{\alpha}
\def\be{\beta}
\def\ga{\gamma}
\def\de{\delta}
\def\De{\Delta}
\def\e{\varepsilon}
\def\la{\lambda}
\def\vr{\varphi}
\def\si{\sigma}
\def\Si{\Sigma}
\def\osi{{\overline{\sigma}}}
\def\om{\omega}

\def\lv{\left\vert}
\def\rv{\right\vert}
\def\lV{\left\Vert}
\def\rV{\right\Vert}
\def\lb{{\right]}}
\def\rb{{\left[}}
\def\lB{\left[}
\def\rB{\right]}

\def\then{\Longrightarrow}
\def\ov{\overline}
\def\longto{\longrightarrow}

\def\oom{{\overline{\omega}}}
\def\KS{{\K\times\Si}}
\def\oU{{\overline{U}}}
\def\ow{{\overline{w}}}
\def\ox{{\overline{x}}}
\def\CK{{C^\a(\K,\re)}}
\def\CS{{C^\a(\Si,\re)}}

\title{Ergodic Transport Theory, periodic maximizing probabilities  and  the twist condition}

\author{G. Contreras (*), A. O. Lopes \,(**) and  E. R. Oliveira (***)}

\date{\today \,\,-\,(*) CIMAT, Guanajuato, CP: 36240, Mexico \,(**) \,Instituto de Matem\'atica, UFRGS, 91509-900 - Porto Alegre, Brasil. Partially supported by CNPq, PRONEX -- Sistemas
Din\^amicos, INCT -- IMPA, and beneficiary of CAPES financial support,\, (***) \,Instituto de Matem\'atica, UFRGS, 91509-900 - Porto Alegre, Brasil}

\maketitle

\begin{abstract}

Consider the shift $T$ acting on the Bernoulli space $\Sigma=\{1, 2,
3,.., d\}^\mathbb{N} $ and $A:\Sigma \to \mathbb{R}$ a Holder
potential.
Denote
$$m(A)\,=\,\max_{\nu \text{\, an invariant probability
for $T$}} \int A(x) \; d\nu(x),$$ and, $\mu_{\infty,A},$ any
probability which attains the maximum value. We will assume that the maximizing probability $\mu_\infty$ is unique and has support in a periodic orbit. We denote by $\T$ the
left-shift acting on the space of points $(w,x) \in \{1, 2, 3,..,
d\}^\mathbb{Z} =\Sigma\times \Sigma=\hat{\Sigma}$. For a given
potential Holder  $A:\Sigma \to \mathbb{R}$, where $A$ acts on the variable $x$, we say that a Holder
continuous function $ W: \hat{\Sigma} \to \mathbb{R}$ is a
involution kernel for $A$ (where $A^*$ acts on the variable $w$), if there is a Holder function $A^*:\Sigma
\to \mathbb{R}$, such that,
$$A^*(w)= A\circ \T^{-1}(w,x)+ W \circ \T^{-1}(w,x) -
W(w,x).$$

Such $A^*$ and $W$ exists when $A$ is Holder.
One can also consider $V^*$ the calibrated subaction for $A^*$, and, the maximizing probability $\mu_{\infty,A^*}$ for $A^*$.

The following result expression is known:  for any given $x\in\Sigma$, it is true the relation
$$V(x) =
\sup_{ w \in \Sigma}\, [\,(W(w,x) - I^*(w))\, -\, V^*
(w)\,],
$$
where $I^*$ is non-negative lower semicontinuous function (it can attain the value $\infty$ in some points). In this way $V$ and $V^*$ form a dual pair. $I^*$ is a deviation function for a Large Deviation Principle.

For each $x$ one can  get one (or, more than one) $w_x$ such attains the supremum above. That is, solutions of
$$V(x) =
W(w_x,x) - V^*
(w_x)- I^*(w_x)\,.$$

A pair of the form $(x,w_x)$ is called an optimal pair.

Under some technical assumptions, we show that generically on the potential $A$,
the set of possible optimal $w_x$, when $x$ covers the all range of
possible elements $x$ in $ \in \Sigma$, is finite.
\newline

\bigskip

AMS classification:  37A05, 37A35, 37A60, 37D35 - Key words - Ergodic Optimization, Subaction, Maximizing probability, Transport Theory, Twist condition, Generic Property

\end{abstract}

\maketitle


\section{Introduction}

We will state in this section the mathematical definitions and concepts  we will consider in this work.
We denote by $T$ the action of the shift in the Bernoulli space
$\{1, 2, 3,.., d\}^\mathbb{N} =\Sigma$.

\begin{defi}
Denote
$$m(A)\,=\,\max_{\nu \text{\, an invariant probability
for $T$}} \int A(x) \; d\nu(x),$$ and, $\mu_{\infty,A},$ any
probability which attains the maximum value. Any one of these
probabilities $\mu_{\infty,A}$ is called a maximizing probability for
$A$.
\end{defi}

We will assume here that the maximizing probability is unique and has support in a periodic orbit. An important conjecture claims that this property is generic (see \cite{CLT} for partial results).

The analysis of this kind of problem is called Ergodic Optimization Theory \cite{Conze-Guivarch} \cite{Jenkinson1} \cite{HY}  \cite{CLT} \cite{Le}  \cite{BG} \cite{CM} \cite{TZ} \cite{GL2}. A generalization of such problems from the point of view of Ergodic Transport can be found in \cite{LM2}.

We denote the set of $\alpha$-Holder potentials on $\Sigma$ by $\CS$.

If $A$ is Holder, and, the maximizing probability for $A$ is unique, then the probability $\mu_{\infty,A}$ is the limit of the Gibbs
states $\mu_{\beta\, A}$, for the potentials $\beta  A,$ when $\beta \to \infty$ \cite{CLT} \cite{CG}. Therefore, our analysis concerns Gibbs probabilities at zero temperature $(\beta=1/T$).

The metric $d$ in $\Sigma$ is defined by
 $$
 d(\om,\nu):= \la^N,\qquad N:=\min\{\,k\in\na\,|\,\om_k\ne \nu_k \,\}.
 $$

The norm we consider in the set $\CS$ of $\alpha$-Holder potentials $A$ is
$$
||A||_\alpha=\sup_{d(x,y)<\e}\frac{\lv A(x)-A(y)\rv}{d(x,y)^\a}+ \sup_{x \in \Sigma}\, |A(x)|. $$

 We denote $\hat{\Sigma}= \Sigma \times \Sigma= \{1, 2, 3,.., d\}^\mathbb{Z}$, and, we use the notation $\bar x=(...x_{-2},x_{-1}\,|\,x_0, x_1, x_2,..)\in \hat{\Sigma},$ \,$w=(x_0, x_1, x_2,..)\in \Sigma$, $x=(x_{-1},x_{-2},..) \in \Sigma$. Sometimes we denote $\bar x=(w,x).$  We say $w=(x_0, x_1, x_2,..)$ are the future coordinates of $\bar x$ and $x=(x_{-1},x_{-2},..) \in \Sigma$ are the past coordinates of $\bar x$. We also use the notation: $\sigma$ is the shift acting in the past coordinates $w$ and $T$ is the shift acting in the future coordinates $x$. Moreover, $\T $ is the (right side)-shift on $\hat{\Sigma}$, that is,
$$ \T^{-1}  (...x_{-2},x_{-1}\,|\,x_0, x_1, x_2,..)= (...x_{-2},x_{-1}\,x_0\, |\, x_1, x_2,..).$$

As we said before we denote the action of the shift in the coordinates $x$ by $T$, that is,
$$T(x_{-1},x_{-2},x_{-2}..)= (x_{-2},x_{-3},x_{-4}..).$$

For $w=(x_0, x_1, x_2,..)\in \Sigma$, $x=(x_{-1},x_{-2},..) \in \Sigma$, denote $\tau_w\,(x)= (x_0\,,x_{-1},x_{-2},..).$

We use sometimes the simplified notation $\tau_w=\tau_{x_0}$, because the dependence on $w$ is just on the first symbol $x_0$,


Using the simplified notation $\bar x=(w,x)$, we have
$$ \T^{-1} (w,x) = ( \,\sigma(w), \,\tau_{w}(x)  \,).$$

We also define $\tau_{k,a} x= (a_{k},a_{k-1}, ...,a_1, x_0,x_1,x_2,...) $, where $x=(x_0,x_1,x_2,...),$ $a=(a_1,a_2,a_3,...)$.

Bellow we consider that $A$ acts on the past coordinate $x$ and  $A^*$ acts on the future coordinate $w$.
\begin{defi}

Consider $A:\Sigma \to \mathbb{R}$ Holder.
We say that a Holder continuous function $ W: \hat{\Sigma} \to \mathbb{R}$ is a involution kernel for $A$, if
there is a Holder function  $A^*:\Sigma \to \mathbb{R}$, such
that,
$$A^*(w)= A\circ \T^{-1}(w,x)+ W \circ \T^{-1}(w,x) -
W(w,x).$$

We say that $A^* $ is a dual potential of $A$, or, that $A$ and $A^*$ are in involution.
\end{defi}

The above expression can be also written as
$$A^*(w)= A(\tau_{w}(x) ) + W ( \,\sigma(w), \,\tau_{w}(x)  \,) -
W(w,x).$$

\begin{theorem}
\cite{BLT} Given $A$ Holder there exist a Holder $W$ which is an involution kernel for $A$.
\end{theorem}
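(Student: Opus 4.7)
The plan is to construct $W$ explicitly by a telescoping series built from iterates of $\T^{-1}$, with a reference point frozen in the past coordinate. Fix some $x^* \in \Sigma$, and define
$$W(w,x) \,:=\, \sum_{n=0}^{\infty} \Big[\, A\big(\T^{-(n+1)}(w,x)\big) \,-\, A\big(\T^{-(n+1)}(w,x^*)\big)\,\Big].$$
The crucial point is that $A$ depends only on past coordinates. Computing $\T^{-(n+1)}(w,x)$ from the recursion $\T^{-1}(w,x)=(\sigma w,\tau_w x)$ shows that the past coordinates of $\T^{-(n+1)}(w,x)$ and $\T^{-(n+1)}(w,x^*)$ coincide in the first $n+1$ positions (they both begin with $w_n,w_{n-1},\ldots,w_0$) and only start to differ afterwards. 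Hence their distance in $\Sigma$ is at most $\lambda^{n+1}$, and $\alpha$-H\"older continuity of $A$ gives a termwise bound $\|A\|_\alpha \lambda^{\alpha(n+1)}$. This forces absolute and uniform convergence of the series, and $W(w,x^*)=0$ by construction.

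Next, I would define the candidate dual as $A^*(w) := A(\tau_w x^*) + W(\sigma w, \tau_w x^*)$, which is the value forced by the cohomological equation at $x=x^*$. To check that this $A^*$ works for arbitrary $x$, the key manipulation is to reindex the series for $W(\sigma w, \tau_w x)$ using the identity $\T^{-(n+1)}(\sigma w,\tau_w y)=\T^{-(n+2)}(w,y)$. This gives
$$W(\sigma w, \tau_w x) - W(\sigma w, \tau_w x^*) \,=\, W(w,x) - A(\tau_w x) + A(\tau_w x^*),$$
and substituting this into the definition of $A^*(w)$ makes the $x$-dependence cancel, yielding the required involution identity for every $(w,x)$.

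The last step is to check H\"older regularity. Regularity of $W$ in $x$ is immediate from the same exponential decay used for convergence. Regularity in $w$ (and the analogous argument for $A^*$, which has a similar series representation whose $n$-th term is $O(\lambda^{\alpha(n+1)})$) follows from the standard trick: given two points agreeing in the first $N$ symbols, split the series at $n\sim N/2$, bound the head using the H\"older norm of $A$ on $N-n$ common positions, and bound the tail using the uniform estimate $\|A\|_\alpha \lambda^{\alpha(n+1)}$.

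I expect the main technical obstacle to be precisely this last H\"older-in-$w$ estimate for $W$ and $A^*$: one must track two different sources of exponential decay (agreement of $w, w'$ in their first coordinates, versus decay coming from the reference point $x^*$) and balance them by the splitting argument. Everything else is bookkeeping about how $\T^{-1}$ shuffles the symbols in $\hat\Sigma$.
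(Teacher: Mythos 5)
Your construction is the same one the paper uses (following [BLT], and reproduced in Section~4): you build $W(w,x)$ as a telescoping series over backward iterates of $\T$ with a frozen reference point $x^*$, read off $A^*$ from the cohomological identity at $x=x^*$, and verify the identity for general $x$ by the reindexing $\T^{-(n+1)}(\sigma w,\tau_w x)=\T^{-(n+2)}(w,x)$. One small remark on the final step: the splitting at $n\sim N/2$ you anticipate is not needed and in fact costs you half the H\"older exponent. Since the past coordinate of $\T^{-(n+1)}(w,x)$ is $(w_n,\ldots,w_0,x_{-1},x_{-2},\ldots)$, it depends on $w$ only through $w_0,\ldots,w_n$; so if $w$ and $w'$ agree in the first $N$ symbols, the head terms $n<N$ of the series for $W(w,x)-W(w',x)$ (and likewise for $A^*$) cancel \emph{identically}, and only the tail $n\geq N$ remains, which the uniform bound $O(\lambda^{\alpha(n+1)})$ sums directly to $O(d(w,w')^\alpha)$. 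This exact cancellation, rather than a balanced split, is how the paper obtains the sharp $C^\alpha$ estimate for $A^*$ in Section~4.
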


We call $\mu_{\infty,A^*}$ the maximizing probability for $A^*$ (it is unique if $A$ has a unique maximizing $\mu_{\infty \, A}$, as one can see in \cite{BLT})
\bigskip

To consider a dual problem is quite natural in our setting. Note that $m(A)=m(A^*)$ (see \cite{BLT}).

We denote by $\mathbb{ K}=\mathbb{ K}( \mu_{\infty,A}, \mu_{\infty,
A^*})$ the set of probabilities $\hat{\eta} (w,x)$ on
$\hat{\Sigma}$, such that $ \pi_ x^* (\hat{\eta} ) =
\mu_{\infty,A}, \, \, $ and also that $\,\,\pi_ y^* (\hat{\eta} ) =
\mu_{\infty,A^*} \,.$

We are interested in the solution $\hat{\mu  }$  of  the Kantorovich
Transport Problem for $-W$, that is, the solution of
$$
\,\,  \inf_{\hat{\eta} \in \mathbb{ K}  } \int \int
- W(w,x)  \, d\,\hat{\eta}. \,
$$

Note that in the definition of $W$ we use the dynamics of $\T$. Note also that if we consider a new cost of the form $c(x,w)=-W(x,w) + \varphi (w)$, instead of $-W$, where $\varphi$ bounded and measurable, then we {\bf do not} change the original minimization  problem.

\begin{defi}
A calibrated sub-action $V:\Sigma\to \mathbb{R}$ for the potential
$A$, is a continuous function $V$ such that

$$ \sup_{y\, \text{such that}\, T(y)\,=\,x}\,  \,\{\,V(y) \,+ \,A(y) \,-\, m ( A)\,\}\,= \, V( x)  . $$
\end{defi}

We denote by $V^*$ a calibrated subction for $A^*$.

If $V$ is calibrated it is true that for any $z$ in the support of the maximizing probability $V(z)+A(z)-m(A)=V(T(z).$ Generically (on the Holder class) on $A$, one can show that  this equality it true just on the support of the maximizing probability.
An important issue here
is to know  if  this last property is also true for the maximizing probability of $A^*$. We address this question in the last sections.

We denote by $\hat{\mu  }$  the minimizing probability over  $\hat{\Sigma}=\{1, 2, 3,.., d\}^\mathbb{Z}$ for the natural Kantorovich
Transport Problem associated to the $-W$, where $W$ is the involution kernel for $A$ (see \cite{BLT}).

We call  $\hat{\mu  }_{max}$  the natural extension of $ \mu_{\infty,A}$ as described in \cite{BLT} \cite{PY}.

In \cite{LOT} was shown (not assuming the maximizing probability is a periodic orbit) that:

\begin{theorem} Suppose the maximizing probability for $A$ Holder is unique (not necessarily a periodic orbit). Then, the minimizing Kantorovich
probability $\hat{\mu}$ on $\hat{\Sigma}$  associated to $-W$, where $W$ is the involution kernel for $A$, is
$\hat{\mu  }_{max}$.
\end{theorem}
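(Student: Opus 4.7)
The plan is to prove this by Kantorovich duality, using the supremum formula $V(x)=\sup_{w}[W(w,x)-I^{*}(w)-V^{*}(w)]$ as the source of a feasible dual pair and then checking that $\hat{\mu}_{max}$ saturates the resulting lower bound.

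First I would verify that $\hat{\mu}_{max}\in \mathbb{K}$: by construction of the natural extension (following the cited Baraviera--Lopes--Thieullen setup), the past-marginal of $\hat{\mu}_{max}$ is $\mu_{\infty,A}$ and the future-marginal is $\mu_{\infty,A^*}$, so it is an admissible transport plan. Next I would exploit the given formula from Lopes--Oliveira--Thieullen to get the pointwise inequality
\[
V(x)+V^{*}(w)+I^{*}(w)\ \ge\ W(w,x)\qquad\text{for every }(w,x)\in\hat{\Sigma}.
\]
Rewriting this as $(-V(x))+(-V^{*}(w)-I^{*}(w))\le -W(w,x)$ shows that $(\phi,\psi):=(-V,\,-V^{*}-I^{*})$ is a feasible pair for the Kantorovich dual of $\inf_{\hat{\eta}\in\mathbb{K}}\int -W\,d\hat{\eta}$. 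Integrating against any $\hat{\eta}\in \mathbb{K}$ gives the duality lower bound
\[
\int -W\,d\hat{\eta}\ \ge\ -\!\int V\,d\mu_{\infty,A}\ -\!\int V^{*}\,d\mu_{\infty,A^{*}}\ -\!\int I^{*}\,d\mu_{\infty,A^{*}}.
\]

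The second phase is to show $\hat{\mu}_{max}$ attains equality. This splits into two points. The first is that $I^{*}\equiv 0$ on $\mathrm{supp}(\mu_{\infty,A^{*}})$; this is built into the definition of $I^{*}$ in Lopes--Oliveira--Thieullen as a non-negative lower semicontinuous function whose zero set contains the $A^{*}$-maximizing orbits, so $\int I^{*}\,d\mu_{\infty,A^{*}}=0$. The second, and the main obstacle, is the pointwise identity
\[
V(x)+V^{*}(w)\ =\ W(w,x)\qquad\text{for }\hat{\mu}_{max}\text{-a.e.\ }(w,x).
\]
To establish it I would use the involution identity $A^{*}(w)=A\circ\hat T^{-1}(w,x)+W\circ\hat T^{-1}(w,x)-W(w,x)$ together with the calibrated subaction equations for $V$ and $V^{*}$: these imply that the function $G(w,x):=V(x)+V^{*}(w)-W(w,x)+I^{*}(w)$ is $\hat T$-invariant along orbits that realize the supremum defining both $V$ and $V^{*}$. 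Since $\hat{\mu}_{max}$ is ergodic and supported on such optimal bi-infinite orbits (this is the content of identifying $\hat{\mu}_{max}$ with the natural extension of $\mu_{\infty,A}$), $G$ must be $\hat{\mu}_{max}$-a.e.\ constant; the sup-formula forces $G\ge 0$ pointwise and the normalization of the calibrated subactions forces this constant to be zero.

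Combining the two ingredients, integration of the equality $V(x)+V^{*}(w)+I^{*}(w)=W(w,x)$ against $\hat{\mu}_{max}$ yields
\[
\int -W\,d\hat{\mu}_{max}\ =\ -\!\int V\,d\mu_{\infty,A}\ -\!\int V^{*}\,d\mu_{\infty,A^{*}},
\]
which matches the dual lower bound. Hence $\hat{\mu}_{max}$ is a minimizer of the Kantorovich problem for $-W$. For the identification as \emph{the} minimizer, one invokes that any other minimizer must also be supported on the contact set $\{V(x)+V^{*}(w)+I^{*}(w)=W(w,x)\}$; since the past-marginal is the unique maximizing $\mu_{\infty,A}$ and the $\hat T$-dynamics recovers the natural extension from its past-marginal, the minimizer is forced to coincide with $\hat{\mu}_{max}$. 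The delicate step throughout is the rigidity argument in the previous paragraph that converts the sup-characterization of $V$ into a pointwise equality on the support of $\hat{\mu}_{max}$.
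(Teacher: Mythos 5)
Your overall strategy is the same one the paper invokes: the result is quoted from \cite{LOT}, and the paper explicitly says the argument there is the complementary slackness condition from Kantorovich duality, with the crucial identity coming from Proposition 10(1)/Proposition 5 of \cite{BLT}. Your outline (check $\hat{\mu}_{max}\in\mathbb{K}$, use $V(x)+V^*(w)+I^*(w)\ge W(w,x)$ to produce a feasible dual pair, then saturate the bound by showing $V(x)+V^*(w)=W(w,x)$ on $\mathrm{supp}(\hat\mu_{max})$) reproduces exactly that structure, and the integrability issues with $I^*$ are handled correctly because the relevant marginal annihilates $\{I^*>0\}$.

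There are, however, two soft spots. First, the step where you assert the constant is forced to be zero is glossed. What \cite{BLT} actually gives on $\mathrm{supp}(\hat\mu_{max})$ is $V(p)+V^*(p^*)=W(p,p^*)-\gamma$ for some constant $\gamma$, and the paper pointedly remarks ``We can assume that $\gamma$ in \cite{BLT} is equal to zero,'' i.e.\ one must normalize $W$ (the involution kernel is only defined up to coboundary/additive shifts) rather than appeal to ``normalization of the calibrated subactions.'' Your heuristic that $G:=V+V^*-W+I^*$ is $\T$-invariant along optimal orbits, hence $\hat\mu_{max}$-a.e.\ constant by ergodicity, needs a bit more justification; the mechanism actually used is the fundamental relation $R(\tau_w x)=b(x,w)-b(\T^{-1}(x,w))$ together with $R\equiv 0$ on the Mather set, which forces $b$ to be constant along the bi-infinite orbits, and then bi-recurrence is what pins the constant down. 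Second, and more seriously, the final uniqueness paragraph does not work as written: an arbitrary Kantorovich minimizer $\hat\eta\in\mathbb{K}$ is just a coupling and is \emph{not} a priori $\T$-invariant, so you cannot say ``the $\hat T$-dynamics recovers the natural extension from its past-marginal.'' Without the twist hypothesis the contact set need not be a graph, and uniqueness of the optimal plan is not automatic; the theorem as stated really establishes that $\hat\mu_{max}$ \emph{is} a minimizer (which is all that is needed downstream), while strict uniqueness of $\hat\mu$ is what the twist condition buys in the later theorem on the graph property.
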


Moreover, it was shown in \cite{LOT}:

\begin{theorem} \label{prec} Suppose the maximizing probability is unique (not necessarily a periodic orbit). If $V$ is the calibrated subaction for $A$, and  $V^*$ is the calibrated  subaction for $A^*$, then,
the pair $(-V,-V^*)$ is the dual ($-W$)-Kantorovich  pair of $(\mu_{\infty,A},\mu_{\infty,A^*})$.
\end{theorem}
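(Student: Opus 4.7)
The plan is to verify Kantorovich weak duality by exhibiting a dual-admissible pair that attains the primal optimum. The bridge from the calibration side to the transport side is the Lopes--Oliveira--Thieullen formula
\[
V(x) = \sup_{w \in \Sigma}\bigl[W(w,x) - V^*(w) - I^*(w)\bigr]
\]
recalled in the introduction, while on the transport side I will use the preceding theorem, which identifies the minimizer $\hat{\mu}$ with the natural extension $\hat{\mu}_{max}$.

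First I would read off from the LOT formula the pointwise inequality $W(w,x) \leq V(x) + V^*(w) + I^*(w)$, valid for every $(w,x) \in \hat{\Sigma}$. Setting $\phi(x) := -V(x)$ and $\psi(w) := -V^*(w) - I^*(w)$, this becomes $\phi(x)+\psi(w) \leq -W(w,x)$, so $(\phi,\psi)$ is admissible in the Kantorovich dual of the cost $-W$ on $\mathbb{K}$. Because $I^* \geq 0$ and $I^* \equiv 0$ on $\mathrm{supp}(\mu_{\infty,A^*})$ (a standard feature of the LOT $I^*$), one has $\int I^*\, d\mu_{\infty,A^*} = 0$, so $\int \psi\, d\mu_{\infty,A^*} = \int (-V^*)\, d\mu_{\infty,A^*}$.

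Next I would close the duality gap by proving that equality in the LOT inequality holds $\hat{\mu}$-a.e. The future marginal of $\hat{\mu}$ is $\mu_{\infty,A^*}$, so $I^*(w)$ vanishes $\hat{\mu}$-a.e.; the substantive point is to show that the supremum defining $V(x)$ is actually attained at that specific $w$ for $\hat{\mu}$-a.e.\ $(w,x)$. Since $\hat{\mu} = \hat{\mu}_{max}$ is carried by pairs $(w,x)$ in which the future coordinate $w$ encodes a calibrating backward history of $x$ under $T$, these are exactly the optimal pairs in the sense of the LOT formula, and the sup is realized there. Granting this, $V(x) + V^*(w) = W(w,x)$ holds $\hat{\mu}$-a.e., and integrating against $\hat{\mu}$ yields
\[
\int(-V)\, d\mu_{\infty,A} + \int(-V^*)\, d\mu_{\infty,A^*} = \int(-W)\, d\hat{\mu}.
\]
Combined with the admissibility above and with $\hat{\mu}$ being the primal minimizer, weak Kantorovich duality forces $(-V, -V^*-I^*)$ to be dual-optimal; since $-V^*$ and $-V^*-I^*$ coincide $\mu_{\infty,A^*}$-a.e., the pair $(-V,-V^*)$ is also a Kantorovich dual pair for $(\mu_{\infty,A},\mu_{\infty,A^*})$ in the sense of the statement.

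I expect the main obstacle to be the attainment claim in the second step --- showing that for $\hat{\mu}_{max}$-a.e.\ $(w,x)$ the LOT supremum is realized at that particular $w$. This requires using precisely how the natural extension couples the forward $T$-dynamics with the calibrating backward preimage selection, and is where the genuine ergodic-optimization content of the argument lies; the remaining ingredients are essentially bookkeeping via weak duality and the vanishing of $I^*$ on the support.
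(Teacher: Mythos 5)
Your skeleton is the same complementary-slackness route the paper intends: the formula $V(x)=\sup_{w}[W(w,x)-V^*(w)-I^*(w)]$ gives the admissible dual pair $(-V,\,-V^*-I^*)$ for the cost $-W$; the vanishing of $I^*$ on $\mathrm{supp}(\mu_{\infty,A^*})$ together with equality on $\mathrm{supp}(\hat{\mu}_{max})$ closes the duality gap; and weak duality then certifies dual optimality. The paper's own proof is exactly this argument compressed into a citation: the equality on the support is the complementary slackness condition obtained in Proposition 10 (1) of \cite{BLT} (see also the relation $V(p)+V^*(p^*)=W(p,p^*)-\gamma$ for $(p,p^*)\in\mathrm{supp}(\hat{\mu}_{max})$ from that paper), with the constant $\gamma$ normalized to zero.

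The genuine gap is that you never establish the step you yourself flag as the main obstacle: that for $\hat{\mu}_{max}$-almost every $(w,x)$ the supremum defining $V(x)$ is attained at that particular $w$, i.e. $I^*(w)=0$ and $V(x)+V^*(w)=W(w,x)$ on $\mathrm{supp}(\hat{\mu}_{max})$. Saying that the future coordinate ``encodes a calibrating backward history'' is a description, not an argument; this identity is precisely the nontrivial content of the theorem (it is what \cite{BLT} proves via the zero-temperature/large-deviation analysis of the Gibbs relation linking $\phi_A$, $\phi_{A^*}$ and $e^{W-c}$), and without proving it or explicitly importing it from \cite{BLT} the remaining bookkeeping proves nothing. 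A secondary imprecision: $(-V,-V^*)$ itself need not satisfy the pointwise constraint $-V(x)-V^*(w)\le -W(w,x)$ off the support; only $(-V,-V^*-I^*)$ does, and changing the dual function on a $\mu_{\infty,A^*}$-null set preserves the value of the dual functional but not admissibility. The paper is aware of this and states later that, more precisely, $(-V,-V^*)$ is the dual pair for the cost $-W+I^*$ (equivalently, that the transport problems for $-W$ and $-W+I^*$ coincide), so if you keep your last step you should phrase the conclusion in that corrected form rather than claim pointwise admissibility for $-W$.
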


The solution cames from the so called complementary slackness condition \cite{Boyd} \cite{Vi1} \cite{Vi2} which were obtained in proposition 10 (1) \cite{BLT}.
We can assume that $\gamma$ in \cite{BLT} is equal to zero.

One can consider in the Bernoulli space
$\Sigma=\{0,1\}^\mathbb{N}$ the lexicographic order. In this way,
$x<z$, if and only if, the first element $i$ such that, $x_j=z_j$
for all $j<i$, and $x_i \neq z_i$, satisfies the property $x_i<
z_i$. Moreover, $(0,x_1,x_2,...)<   (1,x_1,x_2,...).$

One can also consider the more general case
$\Sigma=\{0,1,...,d-1\}^\mathbb{N}$, but in order to simplify the
notation  and to avoid technicalities, we consider only the case
$\Sigma=\{0,1\}^\mathbb{N}$. We also suppose, from now on, that
$\hat{\Sigma}= \Sigma\times \Sigma.$


\begin{defi} We say a  continuous  $G: \hat{\Sigma}=\Sigma\times \Sigma \to \mathbb{R}$ satisfies the twist condition on
$ \hat{\Sigma}$, if  for any $(a,b)\in \hat{\Sigma}=\Sigma\times
\Sigma $ and $(a',b')\in\Sigma\times \Sigma $, with $a'> a$,
$b'>b$, we have
\begin{equation}
G(a,b) + G(a',b')  <  G(a,b') + G(a',b).
\end{equation}
\end{defi}

\begin{defi} We say a continuous $A: \Sigma \to \mathbb{R}$   satisfies the twist condition, if its  involution kernel $W$
satisfies the twist condition.
\end{defi}

Examples of twist potentials are presented in \cite{LOT}.

One of the main results in \cite{LOT} is:

\begin{theorem} Suppose the maximizing probability is unique (not necessarily a periodic orbit) and $W$ satisfies the twist condition on $\hat{\Sigma}$, then, the support of $\hat{\mu}_{max}= \hat{\mu}$ on $\hat{\Sigma}$ is a graph (up to one possible orbit).

\end{theorem}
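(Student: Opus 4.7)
The plan is to combine Kantorovich duality with the twist condition to force an anti-monotone structure on $\text{supp}(\hat\mu)$, and then to extract the graph property from that structure.

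First I would invoke Theorem \ref{prec}, which identifies $(-V,-V^*)$ as an optimal dual pair for the cost $-W$. After absorbing the Kantorovich constant into $V^*$, this says
$$
W(w,x) \;\leq\; V(x) + V^*(w) \qquad \text{for every } (w,x)\in\hat{\Sigma},
$$
with equality on $\text{supp}(\hat\mu)$. Picking two points $(w_1,x_1),(w_2,x_2)\in\text{supp}(\hat\mu)$, adding the equalities at these points and subtracting the inequalities evaluated at the crossed points $(w_1,x_2)$ and $(w_2,x_1)$, I obtain the pairwise $(-W)$-cyclical monotonicity
$$
W(w_1,x_1)+W(w_2,x_2) \;\geq\; W(w_1,x_2)+W(w_2,x_1).
$$

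Next I would bring in the twist hypothesis. If the coordinates satisfied $w_1<w_2$ and $x_1<x_2$ simultaneously, the twist condition on $W$ would force the strict reverse inequality
$$
W(w_1,x_1)+W(w_2,x_2) \;<\; W(w_1,x_2)+W(w_2,x_1),
$$
contradicting the previous display. Consequently, whenever two support points satisfy $x_1<x_2$ one must have $w_1\geq w_2$; in other words, $\text{supp}(\hat\mu)$ is anti-monotone with respect to the lexicographic order.

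The last step, which I expect to be the main obstacle, is turning anti-monotonicity into a genuine graph statement. If the vertical fiber over some $x_0$ contained two distinct points $w_1<w_2$, anti-monotonicity would rigidly force every support point above $x_0$ to have $w\leq w_1$ and every support point below $x_0$ to have $w\geq w_2$, so $x_0$ is a jump of the underlying monotone correspondence. The difficulty is that a priori the countable set of jumps could be large, so one must exploit the additional structure available here: $\T$-invariance of $\text{supp}(\hat\mu)$ permutes the set of jump points, while uniqueness of $\mu_{\infty,A}$ yields unique ergodicity of the $T$-action on the $x$-projection. Combining these with the rigid ordering of fibers dictated by anti-monotonicity, I expect the jump points to be contained in a single $\T$-orbit, which is precisely the stated ``up to one possible orbit'' exception.
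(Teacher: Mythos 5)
Your first two steps are the right approach and essentially what one expects from \cite{LOT}: Kantorovich duality gives cyclical monotonicity of $\text{supp}(\hat\mu)$ for the cost, and the twist condition then upgrades this to anti-monotonicity of the support. One small correction: as the paper emphasizes right after Theorem \ref{prec}, $(-V,-V^*)$ is the dual pair for the cost $-W+I^*$, not for $-W$, so the pointwise inequality you should start from is $W(w,x)-I^*(w)\le V(x)+V^*(w)$, with equality on $\text{supp}(\hat\mu)$. Since $I^*$ depends only on $w$, the $I^*$-terms cancel when you add the two equalities and subtract the two crossed inequalities, so your cyclical-monotonicity conclusion still follows, but the inequality as you wrote it is not what duality actually gives.

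The genuine gap is in the last step, and the specific mechanism you propose fails. You claim that $\T$-invariance of $\text{supp}(\hat\mu)$ ``permutes'' the set of jump points; it does not. From $\T(w,x)=(\tau_x(w),T(x))$ one sees that $\T$ maps the fiber over $x_0$ injectively into the fiber over $T(x_0)$, so the jump set is forward $T$-invariant. But $\T^{-1}(w,x)=(\sigma(w),\tau_w(x))$ splits the fiber over $x_0$ according to the first symbol of $w$: the part of the fiber lying in the cylinder $[0]$ maps bijectively onto the fiber over $\tau_0(x_0)$, and the part in $[1]$ onto the fiber over $\tau_1(x_0)$. If, say, the fiber over a jump $x_0$ is exactly $\{w_1,w_2\}$ with $w_1\in[0]$ and $w_2\in[1]$, then the fiber over $\tau_0(x_0)$ is the singleton $\{\sigma(w_1)\}$ and the fiber over $\tau_1(x_0)$ is the singleton $\{\sigma(w_2)\}$, so neither $T$-preimage of $x_0$ is a jump. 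The jump set is therefore not backward invariant and is not permuted by the dynamics, so the unique ergodicity of $T$ on the $x$-projection (which you correctly deduce from uniqueness of the maximizing measure) has nothing to bite on. As sketched, the argument does not reduce the jump set to a single orbit; and since the paper cites this theorem from \cite{LOT} without giving the proof, this step would have to be supplied in full.
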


Given $A:\Sigma=\{1,2,..,d\}^\mathbb{N} \to \mathbb{R}$ Holder, the  Ruelle operator
$\mathcal{L}_A : C^{0} (\Sigma) \to  C^{0} (\Sigma)$ is given by $\mathcal{ L}_A (\phi) (x)= \psi(x) =
\sum_{T (z)=x} \, e^{A(z)} \, \phi(z).$

We also consider   $\mathcal{ L}_{A^*} $ acting on continuous functions in $\Sigma=\{1,2,..,d\}^\mathbb{N}$.

We also denote by $\phi_A$ and $\phi_{A^*}$ the corresponding
eigen-functions associated to the main common eigenvalue $\lambda(A) $ of the operators $\mathcal{ L}_{A} $ and $\mathcal{ L}_{A^*} $ (see \cite{PP}).

$\nu_A$, and, $\nu_{A^*}$ are respectively the eigen-probabilities
for the dual of the Ruelle operator $\mathcal{ L}_{A} $ and $\mathcal{ L}_{A^*} $.
\vspace{0.2cm}

Finally, $\mu_A   = \nu_{A}  \, \phi_{A} =  $ and $  \mu_{A^*} = \nu_{A^*} \, \phi_{A^*}  $
are the invariant probabilities such that  they are solution of the
respective pressure problems for $A$ and $A^*$. The probability $\mu_A$ is called  the Gibbs measure for the potential A.

If the maximizing probability is unique, then, it is easy to see that considering for any real $\beta$ the potential $\beta A$ and the corresponding $\mu_{\beta\, A }$, then,
$\mu_{\beta\, A } \to \mu_{\infty \,, A} $, when $\beta \to \infty$.

In the same way, if we take any real $\beta$, the potential $\beta A^*$, and, the corresponding
 $\mu_{\beta\, A^* }$, then $\mu_{\beta\, A^* }\to \mu_{\infty \,, A^* }$, when $\beta\to \infty$.

 In Statistical Mechanics $\beta=\frac{1}{T}$ where $T$ is temperature. Then, $\mu_{\infty,A}$ is the version of the
 Gibbs probability at temperature zero.
 \vspace{0.3cm}

One can choose $c$ (a normalization constant) such that

\begin{equation} \int\, \int \, e^{W(w,x)-c} \, d \nu_{A^*} (w) \,d \nu_{A} (x)\,=\,1,
\end{equation}

A calibrated  sub-action  $V$ can obtained as the limit \cite{CLT}
\cite{CG}
\begin{equation}
 V(x) = \lim_{\beta \to \infty} \frac{1}{\beta} \, \log
\phi_{\beta A} (x).
\end{equation}

In the same way  we can get a calibrated
sub-action $V^*$ for $A^*$ by taking
\begin{equation}
V^*(w) = \lim_{\beta \to \infty} \frac{1}{\beta} \, \log
\phi_{\beta A^*} (w)\,\, . \end{equation}

Moreover, by \cite{BLT}
\begin{equation}
\phi_{ A^*} (w) = \int e^{\, W_A(w,x) - c}
\, d \nu_{ A} (x),
\end{equation}
and,
\begin{equation} \label{eq1} \phi_{ A} (x) = \int e^{\, W_A(w,x) - c}
\,d \nu_{ A^*} (w)=  \int e^{\, W_A(w,x) - c}
\,\frac{1}{\phi_{ A^*}} d \mu_{ A^*} (w).
\end{equation}

\vskip 0.5cm

Note that if $W$ and $A^*$ define an involution for $A$, then, given any real $\beta$,
we have that $\beta\, W$ and $\beta\,A^*$ define an involution for the potential  $\beta\, A$. The normalizing constant $c$, of course, changes with $\beta$ (see \cite{BLT}).

\bigskip

  We say a family of probabilities $\mu_\beta$, $\beta \to \infty$,  satisfies a Large Deviation Principle, if there is function $I:\Sigma \to \mathbb{R}\cup \infty$, which is non-negative, lower semi-continuous, and such that, for any cylinder $K\subset \Sigma$, we have
\[\lim_{\beta\to\infty} \frac{1}{\beta}\log(\mu_{\beta}(K))=-\inf_{z\in K}I(z).\]

In this case we say the $I$ is the deviation function. The function $I$ can take the value $\infty$ in some points.

In \cite{BLT} a Large Deviation Principle is described for the family $\mu_\beta=\mu_{\beta A}$ of equilibrium states for $\beta A$, under the assumption that the maximizing probability for $A$ is unique (see also \cite{LM} for a different case where it is not assumed uniqueness). The function $I$ is zero on the support of the maximizing probability for $A$. We point out that there are examples where $I$ can be zero outside the support (even when the maximizing probability is unique) as we will show bellow in an example due to R. Leplaideur.

Applying the same to $A^*$ we get a deviation function $I^*:\Sigma\to \mathbb{R}\cup\{\infty\}$.
\bigskip

The function $I^*$ is defined by
$$I^*(w)=\sum_{n\geq\, 0}\big( V^*\circ\sigma-V^*-A^* \big)\circ
\sigma^n(w),$$
where $V^*$ is a fixed calibrated subaction.
\bigskip

{\it We point out that in fact the claim of theorem \ref{prec} should say more precisely that
the pair $(-V,-V^*)$ is the dual ($-W+I^*$)-Kantorovich  pair of $(\mu_{\infty,A},\mu_{\infty,A^*})$.}
\bigskip

Using the property (\ref{eq1}) above, and, adapting  Varadhan's Theorem \cite{DZ} to the present setting, it is shown in \cite{LOT}, that given any $x\in\Sigma$, then, it is true the relation
$$V(x) =
\sup_{ w \in \Sigma}\, [\,W(w,x) - V^*
(w)- I^*(w)\,].
$$

For each $x$ we get one (or, more than one) $w_x$ such attains the supremum above. Therefore,
$$V(x) =
W(w_x,x) - V^*
(w_x)- I^*(w_x)\,.$$

{\bf A pair of the form $(x,w_x)$ is called an optimal pair.} We can
also say that $w_x$ is optimal for $x$.
\bigskip

{\it Given $A$ the involution kernel $W$ and the dual potential $A^*$ are not unique. But, the above maximization  problem is intrinsic on $A$. That is, if we take another $A^*$ and the corresponding $W$, there is some canceling, and we get the same problem as above (given $x$ the optimal $w_x$ does not change)}.
\bigskip

It is also true that (see \cite{LOT}) there exists $\gamma$ such that for any $w$
$$\gamma+V^*(w) =
\sup_{ x \in \Sigma}\, [\,W(w,x) - V
(x)- I(x)\,].
$$

Given $w$, a solution of the above maximization is denoted by $x_w$. {\bf We denote the pair $(x_w,w)$ a $*$-optimal pair.}

We assume without lost of generality that $\gamma=0$, by adding $-\gamma$ to the function $W$.

\begin{defi} The set of all $(x, w_x)$, is called the optimal set for $A$, and, denoted by $\mathbb{O}(A)$.

\end{defi}

{\bf Remark 1:} Note that in \cite{LOT} it was shown that, under the twist assumption, the support of the optimal transport periodic probability $\hat{\mu}_A$, for the cost $-W$, is a graph, that is, for each $x$, there is only one $w$ such that $(x,w)$ is in the support of $\hat{\mu}_A$.
But, nothing is said about the  graph property of the set  $\mathbb{O}(A)$. Note that the support of $\hat{\mu}_A$ is contained in $\mathbb{O}(A)$.

{\bf Remark 2:} Note also that the minimal transport problem for the cost $-W$, or the cost $-W + I^*$ is the same (see \cite{LOT}).
\bigskip

Our main result is:

\begin{theorem} Generically, in the set of potentials Holder potentials $A$ that satisfy

(i) the twist condition,

(ii) uniqueness of maximizing probability which is supported in a periodic orbit,

 the set of possible optimal $w_x$, when $x$ covers the all range of possible elements $x$ in $ \in \Sigma$, is finite.

\end{theorem}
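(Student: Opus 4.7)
My plan has three stages. The first is a dynamical invariance: $\mathbb{O}(A)$ is $\T^{-1}$-invariant. Normalising so that $m(A)=m(A^*)=0$, the involution-kernel identity $W(w,x)=A(\tau_w x)+W(\sigma w,\tau_w x)-A^*(w)$ and the telescoping recursion $V^*(w)+I^*(w)+A^*(w)=V^*(\sigma w)+I^*(\sigma w)$ (direct from the series defining $I^*$) combine with the calibration of $V$ to show that $V(x)=W(w,x)-V^*(w)-I^*(w)$ forces both $V(x)=V(\tau_w x)+A(\tau_w x)$ and $V(\tau_w x)=W(\sigma w,\tau_w x)-V^*(\sigma w)-I^*(\sigma w)$. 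Hence $(\tau_w x,\sigma w)\in\mathbb{O}(A)$, and the projection $W_{\mathrm{opt}}:=\{w_x:x\in\Sigma\}$ is $\sigma$-forward-invariant. Since optimality forces $I^*(w_x)<\infty$, $W_{\mathrm{opt}}$ lies inside the countable pre-orbit tree $\bigcup_{n\ge 0}\sigma^{-n}(O^*)$ of the periodic maximizing orbit $O^*=\operatorname{supp}(\mu_{\infty,A^*})$.

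Next I would unwind depth-$n$ contributions. For $w=(a_{n-1},\ldots,a_0)\cdot w^j$ with $w^j=\sigma^n w\in O^*$ and the induced $y_n=(a_0,\ldots,a_{n-1},x_0,x_1,\ldots)\in T^{-n}(x)$, iterating the involution identity gives
\[
W(w,x)-V^*(w)-I^*(w)=\sum_{k=0}^{n-1}A(T^k y_n)+W(w^j,y_n)-V^*(w^j).
\]
Combined with the two calibrations $V(x)\ge V(y_n)+\sum_{k}A(T^k y_n)$ and $V(y_n)\ge W(w^j,y_n)-V^*(w^j)$, the function $w$ is optimal for $x$ iff (a) $y_n$ is a $V$-optimal $T^n$-preimage of $x$ and (b) $y_n\in Y^j:=\{z\in\Sigma:V(z)=W(w^j,z)-V^*(w^j)\}$. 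Finiteness of $W_{\mathrm{opt}}$ thus reduces to showing that each $Y^j$ is finite and that no off-orbit $y\in Y^j\setminus O_{\infty,A}$ is forward-calibrated ($V(Ty)=V(y)+A(y)$) for infinitely many steps.

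The third stage is a Baire-type perturbation within the H\"older space. The twist condition plus the graph-property theorem forces $Y^j\cap O_{\infty,A}$ to be the single past-orbit point paired with $w^j$. Off $O_{\infty,A}$, the non-negative H\"older function $g^j(z):=V(z)-W(w^j,z)+V^*(w^j)$ has $Y^j=\{g^j=0\}$. I would construct H\"older perturbations $\psi$ supported in small clopen sets disjoint from a neighbourhood of $O_{\infty,A}$, so as to preserve the periodic maximizing probability and (for small $\|\psi\|_\alpha$) the twist condition, while strictly raising $g^j$ at any prescribed off-orbit zero. Intersecting over the countable family of ``bad'' pairs $(j,z)$ produces an open-dense set of admissible $A$'s on which every $Y^j$ consists of the single on-orbit point; by the previous step this forces $W_{\mathrm{opt}}\subset O^*$, which is finite.

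The main obstacle is making the density half quantitative: a change in $A$ simultaneously deforms $A^*$, $W$, $V$, $V^*$ and $I^*$, so the perturbative effect on $g^j$ must be tracked through the zero-temperature limits $V=\lim_{\beta\to\infty}\frac{1}{\beta}\log\phi_{\beta A}$, $V^*=\lim_{\beta\to\infty}\frac{1}{\beta}\log\phi_{\beta A^*}$ and the integral representation \eqref{eq1}. The strict twist inequality provides the wedge that makes a clopen-localised perturbation effective without disturbing on-orbit dynamics; combined with the countable combinatorial structure of the pre-orbit tree, this reduces the density claim to a standard Baire category argument.
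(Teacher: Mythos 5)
Your stage 1 silently assumes what turns out to be the real genericity content. The implication ``$I^*(w_x)<\infty$ forces $w_x$ into the countable pre-orbit tree $\bigcup_n\sigma^{-n}(O^*)$'' does \emph{not} follow merely from uniqueness of the periodic maximizing measure for $A^*$; it requires the extra hypothesis that $R^*(w)>0$ for every preimage $w$ of the maximizing orbit lying off the orbit (the ``good'' condition). The paper exhibits a Leplaideur-type Lipschitz potential $A^*$ with a unique period-two maximizing orbit and a point $w_0$ off it with $R^*(w_0)=0$, precisely to show that the assertion can fail. Establishing the good condition residually is the entire content of Sections 3--4: one has to show that the Aubry set equals the support of the maximizing measure generically (Theorem~\ref{TR}) and then transfer this to the dual side, which is delicate because $A\mapsto A^*$ is only a homeomorphism between the quotient spaces $C^\alpha/\cB$ and $C^\alpha/\cB^*$ by coboundaries (Lemma~\ref{LLL}). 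Your phrase ``reduces to a standard Baire category argument'' hides exactly this nontrivial duality.

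Stage 3 is where the plan breaks. You aim to perturb so that each $Y^j=B(w^j)$ (for $w^j\in O^*$) shrinks to the single on-orbit point, and you conclude $W_{\mathrm{opt}}\subset O^*$. This target is unachievable. Every $x\in\Sigma$ has at least one optimal $w_x$ (compactness plus lower semicontinuity of $I^*$), so the intervals $B(w)$ must cover $\Sigma$. By your own stage-2 equivalence, if $(x,w_x)$ is optimal with $\sigma^n w_x=w^j$ then $y_n\in Y^j$; if $Y^j=\{z^j\}\subset O_{\infty,A}$ this forces $x=T^n(z^j)\in O_{\infty,A}$, so only the finitely many points of the maximizing orbit would admit optimal pairs -- a contradiction. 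Thus no perturbation, however cleverly localized, can make all $Y^j$ singletons. The paper's actual argument is structurally different: once the good condition and twist hold, the map $x\mapsto w_x$ is monotone, one introduces the turning point $c=\sup\{x\,:\,(w_x)_0=1\}$, and the key lemma shows $c$ must be eventually $T$-periodic, because otherwise a Cantor-type removal argument would produce uncountably many distinct optimal $w_x$, contradicting countability. Eventual periodicity of $c$ yields finitely many non-degenerate intervals $B(w)$ covering $\Sigma$, with $W_{\mathrm{opt}}$ finite but not contained in $O^*$.
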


We point out that this is a result for points outside the support of the maximizing probability.

\vspace{0.5cm}

In the first two sections we will show that under certain conditions the set of possible optimal $w_x$ is finite, for any $x$.
In section 3 and 4 we will show that these conditions are generic.
\bigskip

 In \cite{LOS}
it is also considered the twist condition and results for the
analytic setting are obtained.

\bigskip

\section{The twist property}

In order to simplify the notation we assume that
$m(A)=m(A^*)=0$.
\bigskip

Given $A$, we denote
$$\Delta(x,x',y)=\sum_{n\geq1}A\circ\tau_{y,n}(x)
-A\circ\tau_{y,n}(x').$$

The involution kernel $W$ can be computed for any $(\om,x)$ by
$W(\om,x)=\De_A(\om,\ox,x)$,  where we choose a point $\ox$ for good.

It is known the following relation: for any $x,x',w\in \Sigma$, we
have that $W(w,x)-W(w,x')=\Delta(w,x,x')$ (see \cite{BLT})

We assume from now on that $A$ satisfies the twist condition.
It is known in this case (see \cite{Ba} \cite{LMST}), that $x \to w_x$ (can be multivaluated) is
monotonous decreasing.

\vskip 0.5cm
\begin{pro} If $A$ is twist, then
$x \to w_x$ is
monotonous decreasing.

\end{pro}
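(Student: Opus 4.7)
The plan is to run the standard cross--comparison argument for optimal pairs under a supermodularity/twist hypothesis, exploiting the fact that the nuisance terms $V^{*}$ and $I^{*}$ depend only on $w$ and therefore cancel when we add two optimality inequalities.

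First, I would recall the characterization of optimal pairs: by definition, for each $x \in \Sigma$, the element $w_x$ realizes
\[
V(x) = \sup_{w \in \Sigma} \bigl[\, W(w,x) - V^{*}(w) - I^{*}(w)\, \bigr].
\]
Now suppose, for contradiction, that monotonicity fails. Then there exist points $x, x' \in \Sigma$ with $x < x'$ (in the lexicographic order) and optimal pairs $(x, w_x)$, $(x', w_{x'})$ with $w_x < w_{x'}$. Writing down the optimality of $w_x$ tested against $w_{x'}$ and of $w_{x'}$ tested against $w_x$ gives
\[
W(w_x, x) - V^{*}(w_x) - I^{*}(w_x) \;\geq\; W(w_{x'}, x) - V^{*}(w_{x'}) - I^{*}(w_{x'}),
\]
\[
W(w_{x'}, x') - V^{*}(w_{x'}) - I^{*}(w_{x'}) \;\geq\; W(w_x, x') - V^{*}(w_x) - I^{*}(w_x).
\]

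Second, I would add these two inequalities: the $V^{*}$ and $I^{*}$ contributions cancel exactly (this is the crucial structural feature that makes the argument work, since $V^{*}$ and $I^{*}$ are functions of $w$ alone), and we are left with
\[
W(w_x, x) + W(w_{x'}, x') \;\geq\; W(w_{x'}, x) + W(w_x, x').
\]

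Third, I would invoke the twist hypothesis on $W$. Since $A$ is twist, $W$ satisfies the twist condition on $\hat{\Sigma}$. Setting $a = w_x$, $a' = w_{x'}$, $b = x$, $b' = x'$, our standing assumption gives $a' > a$ and $b' > b$, so the twist inequality reads
\[
W(w_x, x) + W(w_{x'}, x') \;<\; W(w_x, x') + W(w_{x'}, x),
\]
which contradicts the inequality obtained by adding the optimality relations. Hence the assumption $w_x < w_{x'}$ was impossible, and $x < x'$ forces $w_x \geq w_{x'}$, which is exactly the asserted (weak) monotone decrease of $x \mapsto w_x$.

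There is no real technical obstacle here: the entire proof is a two-line cross-comparison. The only thing one must check carefully is that the non-$W$ contributions drop out when summing the optimality inequalities, and this is automatic because $V^{*}(w) + I^{*}(w)$ is a function of $w$ alone; if $x \mapsto w_x$ is multivalued the same argument applies to any choice of optimal $w_x$ and $w_{x'}$, yielding monotonicity of the whole correspondence.
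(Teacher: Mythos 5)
Your proof is correct and is exactly the standard cross-comparison argument: test the optimality of $w_x$ against $w_{x'}$ and vice versa, add the two inequalities so the $V^{*}+I^{*}$ terms (which depend only on $w$) cancel, and observe that the resulting inequality $W(w_x,x)+W(w_{x'},x')\ge W(w_{x'},x)+W(w_x,x')$ is incompatible with the twist inequality when $x<x'$ and $w_x<w_{x'}$. This is the same route taken in the paper (and in the cited reference), merely phrased there via the cocycle $\Delta(x,x',w)=W(w,x)-W(w,x')$ rather than by adding the two displayed inequalities directly.
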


{\it Proof:} See \cite{LOS}
$\blacksquare$\\

We define $R$ by  the expression $R (x)= V(\sigma (x) ) - V(x) - A(x)\geq 0$,  and, we define $R^*$ by  $R^* (w)= V^*(\sigma (w) ) - V^*(w) - A^*(w)\geq 0$.

Note that given $y$, there is a preimage $x$ of $y$, such that, $R(x)=0.$ The analogous property is true for $R^*$.

Given $A$ (and a certain choice of $A^*$ and $W$) the next result (which does not assume the twist condition) claims that the dual of $R$ is $R^*$, and the corresponding involution kernel is
$(V^* + V - W).$

\begin{pro} (Fundamental Relation)(FR)

$$ R(\tau_{w}x)= (V^* + V - W)(x,w) - (V^* + V - W)( \tau_{w}x , \sigma(w)) + R^* (w).$$

\end{pro}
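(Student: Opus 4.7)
The plan is a direct algebraic verification, using three ingredients: the definition of $R$ applied at the point $\tau_{w}x$, the definition of $R^{*}$ applied at $w$, and the defining identity of the involution kernel. No minimization or optimization argument is needed; everything reduces to bookkeeping of which variable each function depends on.

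First I would simplify the left-hand side. By definition $R(\tau_{w}x)=V(\sigma(\tau_{w}x))-V(\tau_{w}x)-A(\tau_{w}x)$. Since $\tau_{w}x=(x_{0},x_{-1},x_{-2},\ldots)$, the shift acting on past coordinates sends $\tau_{w}x$ to $x$, so
$$R(\tau_{w}x)=V(x)-V(\tau_{w}x)-A(\tau_{w}x).$$

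Next I would expand the right-hand side by reading off which argument each summand depends on: $V^{*}$ lives on $w$, $V$ on $x$, and $W$ on $(w,x)$. Therefore
$$(V^{*}+V-W)(x,w)-(V^{*}+V-W)(\tau_{w}x,\sigma w)+R^{*}(w)$$
equals
$$\bigl[V^{*}(w)+V(x)-W(w,x)\bigr]-\bigl[V^{*}(\sigma w)+V(\tau_{w}x)-W(\sigma w,\tau_{w}x)\bigr]+\bigl[V^{*}(\sigma w)-V^{*}(w)-A^{*}(w)\bigr].$$
The two copies of $V^{*}(\sigma w)$ and the two copies of $V^{*}(w)$ cancel, leaving
$$V(x)-V(\tau_{w}x)-W(w,x)+W(\sigma w,\tau_{w}x)-A^{*}(w).$$

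Finally I would invoke the defining identity of the involution kernel, rewritten as
$$A^{*}(w)=A(\tau_{w}x)+W(\sigma w,\tau_{w}x)-W(w,x),$$
which implies $-A^{*}(w)+W(\sigma w,\tau_{w}x)-W(w,x)=-A(\tau_{w}x)$. Substituting this into the simplified right-hand side yields exactly $V(x)-V(\tau_{w}x)-A(\tau_{w}x)$, matching the left-hand side. There is no real obstacle; the only point requiring care is keeping the dependence on past versus future coordinates straight and noticing that the terms $V^{*}(\sigma w)$ contributed by the second difference and by $R^{*}(w)$ cancel, which is what makes the identity work cleanly without any hypothesis on $A$ (in particular without the twist condition).
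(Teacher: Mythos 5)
Your proof is correct and follows essentially the same route as the paper's: a direct algebraic verification that expands both sides using the definitions of $R$, $R^*$, and the cohomological identity $A^{*}(w)=A(\tau_{w}x)+W(\sigma w,\tau_{w}x)-W(w,x)$, with the $V^{*}$-terms cancelling. The only cosmetic difference is the order in which you group the cancellations; the paper first packages $V^{*}(w)-V^{*}(\sigma w)+R^{*}(w)=-A^{*}(w)$ and $V(x)-V(\tau_{w}x)=R(\tau_{w}x)+A(\tau_{w}x)$ and then substitutes, whereas you expand the full right-hand side and cancel at the end, but the content is identical.
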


\textbf{Proof:} see \cite{LOS}
$\blacksquare$

\vskip 0.5cm

We know that the calibrated subaction satisfies
$$V(x)= \max_{w \in \Sigma} (-V^* -I^* +  W)(x,w).$$
Then, we define
$$b(x,w)= (V^* + V  +I^* - W)(x,w) \geq 0,$$ and,
$$\Gamma_{V}=\{(x,w) \in \Sigma \times \Sigma | V(x)=  (-V^* -I^* +  W)(x,w)\},$$
which can be written in an equivalent form
$$\Gamma_{V}=\{(x,w) \in \Sigma \times \Sigma \,| \, b(x,w)=0\}.$$

\bigskip

Given $x$, this maximum at $w_x$  can not be realized
where $I^*(w)$ is infinity.

\bigskip


{\bf Remark 3: Note, that $b(x,w)=0$, if and only if, $(x,w)$ is an optimal pair. We are not saying anything for
$*$-optimal pairs.}

If we use $ R^* (w)= I^*(w) - I^*(\sigma w)$, the FR becomes
$$R(\tau_{w}x) = (V^* + V - W)(x,w) - (V^* + V - W)(\tau_{w}x , \sigma(w)) + I^*(w) - I^*(\sigma (w)),$$
or
$$R(\tau_{w}x) = b(x,w) - b(\tau_{w}x , \sigma(w))\,\,\,\,\,\,\,\,\,\text{FR1}.$$

From this main equation we get:\\

\begin{lemma}
If $\,\T^{-1}(x,w) = (\tau_{w}x , \sigma(w))$, then

a) $b - b \circ \T^{-1} (x,w) = R(\tau_{w}x)$;

b) The function $b$ it is not decreasing in the trajectories of $\T$;

c) $\Gamma_{V}$ is backward invariant;

d) when $(x,w)$ is optimal then $R(\tau_w(x))=0$.
\end{lemma}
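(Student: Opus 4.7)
The four items are in fact all direct consequences of the Fundamental Relation FR1 stated just above, together with the non-negativity of $b$ and $R$, so the proof plan is short.

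For part (a), the plan is simply to observe that FR1 reads
\[
R(\tau_w x) \;=\; b(x,w) \;-\; b(\tau_w x,\sigma(w)),
\]
and that by the hypothesis $\T^{-1}(x,w) = (\tau_w x,\sigma(w))$ the second term equals $b\circ\T^{-1}(x,w)$. So (a) is really just a rewriting of FR1.

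For part (b), I would use (a) to conclude $b(x,w) - b\circ\T^{-1}(x,w) = R(\tau_w x) \geq 0$ (recall that $R$ is non-negative by definition, since $V$ is a subaction and $m(A)=0$). Therefore $b\circ\T^{-1}\leq b$ pointwise; relabeling $(x',w') = \T^{-1}(x,w)$, this says $b(x',w') \leq b(\T(x',w'))$, i.e.\ $b$ is non-decreasing along forward $\T$-orbits.

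For parts (c) and (d), the plan is to combine (a) with the non-negativity of both $b$ and $R$. If $(x,w)\in \Gamma_V$, i.e.\ $b(x,w)=0$, then by (a)
\[
b\circ\T^{-1}(x,w) \;=\; -R(\tau_w x) \;\leq\; 0.
\]
Since $b\geq 0$ everywhere, this forces $b\circ\T^{-1}(x,w)=0$ and simultaneously $R(\tau_w x)=0$. The first equality gives backward invariance of $\Gamma_V$, proving (c); the second is exactly (d), since by Remark 3 optimality of $(x,w)$ means $(x,w)\in \Gamma_V$.

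There is no real obstacle here; the entire content of the lemma is already encoded in FR1, and the only thing to be careful about is the sign/orientation conventions (that $R\geq 0$ and $b\geq 0$, and that $\T^{-1}$ acts on the coordinate pair $(x,w)$ in the way asserted by the hypothesis, consistent with the earlier formula $\T^{-1}(w,x) = (\sigma w,\tau_w x)$). Once those are unpacked, (a)--(d) follow in one line each.
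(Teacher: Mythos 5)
Your proposal is correct and follows essentially the same route as the paper: read off (a) from FR1, deduce (b) from $R\ge 0$, and use the non-negativity of $b$ to squeeze $b\circ\T^{-1}(x,w)$ to zero when $b(x,w)=0$. One small improvement over the paper's exposition is that your single squeeze argument delivers (c) and (d) simultaneously, whereas the paper states the proof only through (c) and leaves (d) implicit.
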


\textbf{Proof:}\\

The first one its a trivial consequence of the definition of $\T^{-1}$. The second one it is a consequence of $R \geq 0$:\\
$$b - b \circ \T^{-1} (x,w) = R(\tau_{w}x) \geq 0 $$
$$b(x,w)  \geq   b \circ \T^{-1} (x,w).$$

In order to see the third part we observe that
$$(x,w) \in \Gamma_{V} \Leftrightarrow b(x,w)=0$$
Since
$$b(x,w)  \geq   b \circ \T^{-1} (x,w) \geq 0$$
we get $b(\tau_{w}x , \sigma(w))=0$ thus  $(\tau_{w}x , \sigma(w)) \in \Gamma_{V}$. $\blacksquare$\\

 From the above we get that in the case $(x,w)$ is optimal, then, $\T^{-1} (x,w) $ is also optimal. Indeed, we have that
$$b(x,w)=0 \Rightarrow b(\tau_{w}x , \sigma(w))=0.$$

This is equivalent to
$$V(x)= - V^*(w) -I^*(w) - W(x,w) \Rightarrow $$
$$V(\tau_{w}x)= - V^*(\sigma(w)) -I^*(\sigma(w)) - W(\tau_{w}x,\sigma(w)).$$

\bigskip
{\bf In this way $\T^{-n}$ spread optimal pairs.}
\bigskip

\bigskip

We denote by $M$ the support of the maximizing probability periodic orbit.

Consider the compact set of points $P=\{w\in \Sigma$, such that $\sigma(w)\in M$, and $w$ is not on $M\}$.
\bigskip

 \begin{defi}
 We say that $A$ is good if,  for each $w\in P$, we have that $R^*(w)>0$.
\end{defi}

We alternatively, say sometimes that
 $R^*$ is good for $A^*$.

We point out that there are examples of potentials $A^*$ (with a
unique maximizing probability) where the corresponding $R^*$ is not
good (see Example \ref{ExRen} in the end of the present section) .


Remember that ,
$$I^*(w)=\sum_{n\geq\, 0}\big( V^*\circ\sigma-V^*-A^* \big)\circ
\sigma^n(w)\,=\,\sum_{n\geq\, 0}\, R^*( \sigma^n(w) )  .$$

In \cite{LMST} section 5  it is shown that if $I^*(w) $ is finite, then

$$ \lim_{n \to \infty} \frac{1}{n} \sum_{j=0}^{n-1}\delta_{\sigma^j(w)}\to \mu_\infty^* .$$

One important assumption here is  that $R^*$ is good. We will show later that this property is true for generic potentials $A$.

\begin{pro}

Assume $A$ is good. If $I^*(w)<\infty$, then, $w$ is in the preimage
of the maximizing probability for $A^*$.

\end{pro}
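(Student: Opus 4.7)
I would argue by contradiction. Suppose $I^*(w)<\infty$ but $\sigma^n(w)\notin M$ for every $n\geq 0$, and construct infinitely many $n$ for which $R^*(\sigma^n w)$ exceeds a fixed positive constant, contradicting $\sum_{n\geq 0}R^*(\sigma^n w)<\infty$.

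Using that $A$ is good, the finite set $P=\sigma^{-1}(M)\setminus M$ (at most $p(d-1)$ points, with $p$ the minimal period of $M$) satisfies $R^*|_P>0$; continuity of $R^*$ then yields $\epsilon>0$ and an integer $L$ with $R^*\geq\epsilon$ on each cylinder $[q_0,q_1,\ldots,q_{L-1}]$, $q\in P$. Pick $K\geq\max\{p+1,L\}$ and form the clopen cylinder neighborhood $V=\bigcup_{z\in M}[z_0,\ldots,z_{K-1}]$; since $K\geq p+1$, distinct points of $M$ have distinct initial $K$-words, so each element of $V$ matches a unique $z\in M$ on its first $K$ letters.

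The key combinatorial step is a trapping lemma: if $\sigma^n(w)\in V$ for every $n\geq n_0$, then the matching indices must satisfy $i_{n+1}\equiv i_n+1\pmod{p}$, and iterating this forces $\sigma^{n_0}(w)=z^{(i_{n_0})}\in M$, contradicting the hypothesis. Hence the orbit exits $V$ at arbitrarily large times. On the other hand, the Ces\`aro equidistribution recalled from section~5 of \cite{LMST}, $\frac{1}{n}\sum_{j=0}^{n-1}\delta_{\sigma^j w}\to\mu_\infty^*$ weakly, combined with the fact that $V$ is clopen, gives that visits to $V$ have asymptotic density $\mu_\infty^*(V)=1$; so the orbit also enters $V$ at arbitrarily large times. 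Let $n_1<n_2<\cdots$ be the entry times, characterized by $\sigma^{n_k-1}(w)\notin V$ and $\sigma^{n_k}(w)\in V$.

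At each such entry time, if $z^{(i)}\in M$ is the point shadowed by $\sigma^{n_k}(w)$ on its first $K$ letters and $c$ is the first letter of $\sigma^{n_k-1}(w)$, the non-inclusion $\sigma^{n_k-1}(w)\notin V$ together with uniqueness of the shadowing forces $c$ to differ from the first letter of the predecessor of $z^{(i)}$ in $M$. Therefore $q=(c,z^{(i)}_0,z^{(i)}_1,\ldots)$ belongs to $P$, and $\sigma^{n_k-1}(w)$ agrees with $q$ on its first $K\geq L$ letters, yielding $R^*(\sigma^{n_k-1}(w))\geq\epsilon$. Summing over $k$ gives $I^*(w)=\infty$, the required contradiction. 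The main obstacle I anticipate is the combinatorial bookkeeping in the trapping lemma and in matching the entry points with a cylinder neighborhood of $P$; both rest on choosing $K$ large enough to make the shadowing of $M$ unambiguous, and everything else is routine.
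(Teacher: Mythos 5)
Your proof is correct and rests on the same two ingredients as the paper's argument: the Ces\`aro convergence $\tfrac1n\sum_{j<n}\delta_{\sigma^j w}\to\mu_\infty^*$ giving asymptotic density of visits to any neighbourhood of $M$, and the fact that $R^*$ is bounded below on a neighbourhood of the finite set $P$. Where you differ is in the execution. The paper works with metric balls $\Omega_\delta$ around $P$ and $A_\delta=\sigma(\Omega_\delta)$ around $M$, asserts (without detail) that any new entry into $A_\delta$ must be preceded by a visit to $\Omega_\delta$, then splits into two cases according to the lengths $m_n$ of the successive visits to $A_\delta$, and finishes the ``eventually trapped'' case by invoking expansiveness/shadowing. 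You instead exploit the cylinder structure of the subshift directly: a clopen neighbourhood $V$ of $M$ built from windows of length $K\ge p+1$ makes the shadowed point of $M$ unique, so the trapping lemma (eventually in $V$ $\Rightarrow$ eventually a point of $M$) follows by a short induction on the overlap of consecutive windows, and at each entry time $n_k$ the point $\sigma^{n_k-1}(w)$ is forced into an explicit $L$-cylinder around an element of $P$, giving $R^*\ge\epsilon$ there. This replaces the paper's somewhat opaque case analysis on the $m_n$'s and its unstated appeal to shadowing with concrete word-level computations, so every step can be checked. The trade-off is that your version is genuinely tied to the coding, whereas the paper's (informal) formulation is phrased in terms of metric neighbourhoods and would transfer more readily to a general expanding map; your choice of $K\ge p+1$ (rather than $K\ge p$) is exactly what is needed so that the length-$(K-1)$ overlap between consecutive shadowing windows still determines the tracked point of $M$.
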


For a proof see section 5 in \cite{LMST}

\begin{pro}

Suppose $(x,w_x)$ is an optimal pair, and, $A$ is good, then, there
exists $k$, such that, $\tilde{w}_k=\sigma^k(w_x)$ is in the support
of the maximizing probability for $A^*$. Moreover, for such $k$, we
have that $\T^{-k} (x,w_x)$ is an optimal pair.

\end{pro}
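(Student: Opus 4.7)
The plan is to combine three ingredients already available: the definition of optimal pair in terms of $b$, the previous proposition (which crucially uses the assumption that $A$ is good), and the backward invariance of $\Gamma_V$ from the Lemma. The expected structure is: first, show that optimality forces $I^*(w_x)<\infty$; second, apply the previous proposition to locate $w_x$ in the preimage of $M$; third, iterate the backward invariance of $\Gamma_V$ to propagate optimality along the orbit of $(x,w_x)$ under $\T^{-1}$.

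For the first step, I would start from the definition $b(x,w)=(V^*+V+I^*-W)(x,w)\geq 0$ and Remark~3, which asserts that $(x,w_x)$ being optimal is equivalent to $b(x,w_x)=0$. Rewriting this equality as $I^*(w_x)=W(x,w_x)-V(x)-V^*(w_x)$ and recalling that $V$, $V^*$, and $W$ are (finite-valued) continuous functions, we immediately obtain $I^*(w_x)<\infty$. This is the only place where the hypothesis ``$(x,w_x)$ is optimal'' enters the first half of the proof, and it is essentially a tautology, so I do not expect any real difficulty here.

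For the second step, having $I^*(w_x)<\infty$ and the standing assumption that $A$ is good, I would invoke the previous proposition verbatim: it asserts precisely that such a $w_x$ lies in the preimage of the maximizing probability for $A^*$. Since we are assuming uniqueness and that this maximizing probability is supported on the periodic orbit $M=\{\tilde{w}_1,\dots,\tilde{w}_p\}$, being in the preimage means there is some $k\geq 0$ with $\sigma^k(w_x)\in M$, i.e.\ $\sigma^k(w_x)=\tilde{w}_k$ for an appropriate indexing. No further argument is needed here.

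For the final claim, I would invoke part (c) of the Lemma, which states that $\Gamma_V$ is backward invariant under $\T$. Since optimality of a pair is exactly membership in $\Gamma_V$, one application gives that $\T^{-1}(x,w_x)=(\tau_{w_x}x,\sigma(w_x))$ is optimal; a straightforward induction on the exponent then yields that $\T^{-j}(x,w_x)$ is optimal for every $j\geq 0$, and in particular for $j=k$. The only mild subtlety here is bookkeeping: one must check that applying $\T^{-1}$ to an optimal pair produces a pair of the same syntactic form (past, future), which is exactly the formula $\T^{-1}(x,w)=(\tau_w x,\sigma(w))$ used throughout, so the induction is immediate. Overall, the only step with any genuine content is the invocation of the previous proposition, and that content is already absorbed into the hypothesis that $A$ is good.
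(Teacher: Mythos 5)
Your proof is correct and follows essentially the same route as the paper: optimality of $(x,w_x)$ forces $I^*(w_x)<\infty$, the previous proposition (using that $A$ is good) places $\sigma^k(w_x)$ in $M$, and backward invariance of $\Gamma_V$ (part (c) of the Lemma, iterated $k$ times) shows $\T^{-k}(x,w_x)$ is optimal. You merely spell out the finiteness of $I^*(w_x)$ and the induction more explicitly than the paper does; there is no substantive difference.
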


{\it Proof:}

Suppose  $(x,w_x)$ is optimal.
 Therefore, $I^*(w_x)<\infty.$ Then, by a previous proposition,  $w_x$ is in the pre-image of the maximizing probability for $A^*$, that is, there exists $k$ such that $\tilde{w}=\sigma^k(w_x)$ is in the support of the maximizing probability for $A^*$.

 Moreover, $b(\T^{-k} (x, w_x))=(x_k , \sigma^k (w_x))=(x_k, \tilde{w}_k)$ is also optimal.  $\blacksquare$\\

\begin{pro}

Assume $A$ is good, then, the set of $w$ such that $I^*(w)<\infty$
is countable.

\end{pro}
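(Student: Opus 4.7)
The proof plan is essentially a direct corollary of the previous proposition, so I expect it to be short. The strategy is to identify the set $\{w : I^*(w) < \infty\}$ with a countable union of finite preimages of the maximizing periodic orbit.

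First, I would invoke the preceding proposition: under the assumption that $A$ is good, if $I^*(w) < \infty$, then there exists some $k \geq 0$ such that $\sigma^k(w) \in M$, where $M$ is the support of the (unique) maximizing probability for $A^*$. This means
\[
\{w \in \Sigma : I^*(w) < \infty\} \;\subseteq\; \bigcup_{k \geq 0} \sigma^{-k}(M).
\]

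Next, I would use the standing assumption that the maximizing measure is supported on a periodic orbit of period $p$, so $M = \{\tilde{w}_1,\ldots,\tilde{w}_p\}$ is a finite set with exactly $p$ elements. Since the shift $\sigma$ on the Bernoulli space $\Sigma = \{1,\ldots,d\}^{\mathbb{N}}$ is a $d$-to-$1$ map, for every $\tilde{w}_j \in M$ and every $k \geq 0$ the preimage set $\sigma^{-k}(\tilde{w}_j)$ has cardinality exactly $d^k$, in particular it is finite. Hence each $\sigma^{-k}(M)$ has cardinality at most $p \cdot d^k$.

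Finally, a countable union of finite sets is countable, so $\bigcup_{k \geq 0}\sigma^{-k}(M)$ is countable, and therefore so is its subset $\{w : I^*(w) < \infty\}$. There is no real obstacle here: the whole content is already in the preceding proposition, which does the work of ruling out points whose forward orbits merely accumulate on $M$ without actually landing on it. The role of the hypothesis that $A$ is good (i.e.\ $R^* > 0$ on $P$) is to force this landing, and hence to collapse the a priori uncountable set of points with $I^*(w) < \infty$ onto the grand preorbit of a finite set.
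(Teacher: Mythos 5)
Your proof is correct and is precisely the argument the paper leaves implicit (the paper states this proposition with no proof, treating it as an immediate corollary of the preceding one). The chain of reasoning — good $\Rightarrow$ finite $I^*$ forces eventual landing on the periodic orbit $M$, the grand preorbit $\bigcup_k \sigma^{-k}(M)$ is a countable union of finite sets, hence countable — is exactly what is intended.
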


\begin{defi} We say a continuous $A: \Sigma \to \mathbb{R}$   satisfies the
the countable  condition, if there are a countable number of
possible optimal $w_x$, when $x$ ranges over the interval $\Sigma$.

\end{defi}

{\bf Remark:} If $A$ is good, then, it satisfies the countable
condition.
\bigskip

We showed before that  the twist property implies that for $x<x'$, if $b(x,w)=0$ and $b(x',w')=0$, then $w' < w$, which means that the optimal sequences are monotonous  not increasing.
Thus,  we define the \textbf{``turning point $c$"}  as being the maximum of the point $x$ that has his optimal sequence starting in 1:
$$c=\sup\{x\, |\, b(x,w)=0 \,\Rightarrow \, w=(1\, w_1\, w_2 ...)\}.$$


The main criteria is the following:\\

\emph{``If $x \in \sigma$ has the optimal sequence  $w=(w_0 \, w_1 \, w_2\, ...)$ then
\[
w_0= \left \{
  \begin{array}{ll}
    1, \,\, if &  x \in [0^\infty,c] \\
    0, \,\, if &  x \in (c,1^\infty]"
  \end{array}
\right.
\]}
Starting from $(x_0,w_0)$ we can iterate FR1 by $\T^{-n} (x,w) =(x_n,w_n)$ in order to obtain new points $w_1 , \, w_2\, ...\in \Sigma$.
Unless the only possible optimal point $w_x$, for all $x$, is a fixed point for $\sigma$, then,  $0<c<1$.

Note that for $c$ there are two optimal pairs $(c,w)$ and $(c,w')$, where the first symbol of $w$ is zero, and, the first symbol of $w'$ is one.



We denote
$$B(w)=\{ x \, | \, b(x,w)=0 \}.$$

\begin{lemma} (Characterization of optimal change)
Let $c \in (0^\infty,1^\infty)$ be the turning point then, for any
$x < x'$ and $b(x,w)=0$ and $b(x',w')=0$, we have $w \neq w'$  if,
and only if, there exists $n \geq 0$ such that $T^n (c) \in [x,x']$.
Moreover, if $x,x'$ are such that $w_x$ and $w_x'$ are identical
until the $n$ coordinate, then, $T^n (c)\in (x,x').$

Each set $B(w)=[a,b]$ is such that $a=T^n(c)$, or, $a$ it is
accumulated by a subsequence of $T^j(c)$ from the left side. Similar
property is true for $b$ (accumulated by the right side).

\end{lemma}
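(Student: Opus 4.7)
The plan is to exploit two facts established earlier: the backward invariance of optimal pairs under $\T^{-1}$, i.e.\ if $(x,w)$ is optimal then so is $\T^{-1}(x,w)=(\tau_w x,\sigma w)$; and the turning-point criterion, which forces the first symbol of an optimal $w_y$ to be $1$ for $y\le c$ and $0$ for $y>c$. Combined with the monotonicity of $y\mapsto w_y$ coming from the twist hypothesis, these reduce the lemma to a combinatorial statement about cylinders and inverse branches of $T$.

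For the forward direction, together with the ``moreover'', assume $w\ne w'$ are optimal for $x<x'$. Monotonicity forces $w\ge w'$ lexicographically; let $n$ be the first position where they disagree, so $w_n=1$ and $w'_n=0$. Iterate $\T^{-1}$ exactly $n$ times: both pairs remain optimal and become $(x^{(n)},\sigma^n w)$ and $({x'}^{(n)},\sigma^n w')$, where $x^{(n)}$ and ${x'}^{(n)}$ lie in the common length-$n$ cylinder $[w_0,\ldots,w_{n-1}]$. Applying the turning-point criterion to the first symbols of $\sigma^n w$ and $\sigma^n w'$ gives $x^{(n)}\le c<{x'}^{(n)}$. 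Since an $n$-cylinder is a lex-interval and contains both $x^{(n)}$ and ${x'}^{(n)}$, it must also contain $c$; and $T^n$ restricted to this cylinder is an order-preserving bijection onto $\Sigma$, so $T^n(c)\in[x,x']$. Strictness of both $x^{(n)}<c$ and $c<{x'}^{(n)}$ (which holds precisely when the first $n$ coordinates agree exactly) upgrades this to $T^n(c)\in(x,x')$.

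For the reverse direction, proceed by contrapositive. If a common $w$ is optimal for both $x$ and $x'$, monotonicity of $y\mapsto w_y$ forces $w_y=w$ for every $y\in[x,x']$. Applying $\T^{-n}$ to each such pair, $\sigma^n w$ is optimal for every point of the sub-arc $[x^{(n)},{x'}^{(n)}]$ of the cylinder $[w_0,\ldots,w_{n-1}]$. The first symbol $w_n$ of $\sigma^n w$ then forces the whole sub-arc to lie on one side of $c$; pushing forward by $T^n$ (order-preserving on the cylinder) rules out $T^n(c)\in(x,x')$. The only delicate case is when $T^n(c)$ equals an endpoint, i.e.\ $c$ coincides with $x^{(n)}$ or ${x'}^{(n)}$, in which case the optimal sequence at that endpoint is non-unique and the alternate choice already disagrees with $w$, which is the correct reading of the iff. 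The closing statement about $B(w)=[a,b]$ is then immediate: the iff shows that any $y\notin B(w)$ close to $a$ from the left produces some $T^{n_y}(c)\in[y,a]$; either one such $T^n(c)$ equals $a$, or the $T^{n_y}(c)$ are distinct and accumulate at $a$ from the left. A symmetric argument at $b$ completes the proof.

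The main obstacle is the combinatorial bookkeeping around cylinders: one must verify carefully that an $n$-cylinder is a lex-interval, conclude from $x^{(n)}\le c<{x'}^{(n)}$ that $c$ sits in the same cylinder, and control the order-preserving bijection $T^n$ between that cylinder and $\Sigma$. The secondary subtlety is handling non-uniqueness of the optimal $w_y$ when $T^n(c)$ coincides with an endpoint; once the convention that any optimal choice is admissible is respected, the edge cases close without circularity.
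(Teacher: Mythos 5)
Your proof is correct and follows essentially the same route as the paper: iterate $\T^{-1}$, invoke FR1 to propagate optimality, apply the turning-point criterion at each depth, and use the order-preserving property of the inverse branches to push $c$ forward into $(x,x')$; the paper carries this out one symbol at a time and leaves the final iteration implicit, while you jump directly to the first disagreement index $n$ and phrase the same argument through the cylinder/interval structure. One small slip: after $n$ applications of $\T^{-1}$ the iterates $x^{(n)},{x'}^{(n)}$ lie in the cylinder $[w_{n-1},w_{n-2},\ldots,w_0]$ (the prepended symbols appear in reverse order), not $[w_0,\ldots,w_{n-1}]$, though this does not affect the argument since the point is only that they share a common $n$-cylinder.
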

\begin{proof}

Step 0\\
If $x < x' \leq c$ then $w_{0} = w'_{0}=1$ else if $c< x < x' $ then $w_{0} = w'_{0}=0$.
Suppose $w_{0} = w'_{0}= \in \{0,1\}$ then applying FR1 we get  $\tau_{i}x < \tau_{i}x'$ and $b(\tau_{i}x , (w_{1} \, w_{2} \, ...))=0$ and $b(\tau_{i}x' , (w'_{1} \, w'_{2} \, ...))=0$.

Step 1\\
If $\tau_{i}x < \tau_{i}x' \leq c$ then $w_{1} = w'_{1}=1$ else if $c< \tau_{i}x < \tau_{i}x' $ then $w_{1} = w'_{1}=0$.  Otherwise if $\tau_{1}x <  c < \tau_{1}x' $ we can use the monotonicity of $T$ in each branch in order to get $ x <  T(c) < x' $. Thus $$w_{1} \neq w'_{1} \Leftrightarrow  x <  T(c) < x'. $$
The conclusion comes by iterating this algorithm.

The last claim is obvious from the above.

\end{proof}
\bigskip

A point $x$ is called pre-periodic (or, eventually periodic) if
there is $n\neq m $, such that, $T^n(x)=T^m(x).$
\bigskip

We denote $[a,b]$, $a,b\in \Sigma$, $a \leq b$, the set of all
points $w$ in $\Sigma$ such $a\leq w\leq b$. We call $[a,b]$ the
interval determined by $a$ and $b$. Each interval $[a,b]$, with
$a<b$, is not countable

\begin{lemma}

The set
$$B(w)=\{ x \, | \, b(x,w)=0 \}$$
is an interval (can eventually  be a single point). More
specifically, if $B(w)=[a,b]$, then, $a$ and $b$ are adherence
points of the orbit of $c$.

In particular, if $c$ is pre-periodic, then, for any non-empty
$B(w)$, there exists $n,m$ such that $B(w)=[T^n (c), T^m (c)]$
(unless $B(w)$ is of the form $[0,b]$, or $[a,1]$.
\end{lemma}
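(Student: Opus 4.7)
The plan is to derive both claims directly from the Characterization of optimal change lemma using a short convexity argument, with the pre-periodicity hypothesis then collapsing ``adherence to the orbit'' into ``membership in the orbit''.

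\emph{$B(w)$ is an interval.} I would take $x_1<x_3<x_2$ with $x_1,x_2\in B(w)$ and let $w'$ be any optimal sequence for $x_3$, i.e.\ $b(x_3,w')=0$. Since $b(x_1,w)=b(x_2,w)=0$, the characterization lemma applied to the pair $(x_1,x_2)$ rules out any $T^n(c)\in[x_1,x_2]$; in particular none lies in $[x_1,x_3]$, so the same lemma applied to $(x_1,x_3)$ forces $w'=w$. Hence $x_3\in B(w)$ and $B(w)$ is convex in the lexicographic order.

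\emph{Endpoints are adherent to the orbit $O(c):=\{T^n(c):n\ge 0\}$.} Write $B(w)=[a,b]$ and assume $a>0^\infty$ (otherwise we are in the excluded edge case). Suppose for contradiction that $a$ is not adherent to $O(c)$. Then some $\epsilon>0$ would give $(a-\epsilon,a]\cap O(c)=\emptyset$, which also excludes $a$ itself from $O(c)$. For any $x\in(a-\epsilon,a)$ the interval $[x,a]$ would then miss $O(c)$ entirely, so by the characterization lemma any optimal choice at $x$ coincides with $w$, placing $x\in B(w)$ and contradicting the minimality of $a$. The argument at the right endpoint $b<1^\infty$ is symmetric, with accumulation from the right.

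\emph{Pre-periodic case.} If $c$ is pre-periodic then $O(c)$ is finite, hence closed, so being adherent to $O(c)$ is the same as belonging to $O(c)$: this gives $a=T^n(c)$ and $b=T^m(c)$ for some $n,m\ge 0$, unless $a=0^\infty$ or $b=1^\infty$, precisely the cases already excluded in the statement. The only subtlety to watch throughout is that $x\mapsto w_x$ may be multivalued, but this is handled transparently because the characterization lemma is stated for \emph{any} optimal $w$ at $x$ and $w'$ at $x'$, not for a single-valued selection.
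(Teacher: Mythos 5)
Your proof is correct in spirit and takes a genuinely different route for the intervalness claim: you derive convexity of $B(w)$ from the ``Characterization of optimal change'' lemma (ruling out any $T^n(c)$ between $x_1$ and $x_2$ and then forcing $w'=w$ at any intermediate $x_3$), whereas the paper proves convexity directly from the twist property and the spreading of optimal pairs under $\T^{-1}$ --- taking $\tau_{k,w}x<\tau_{k,\tilde w}z<\tau_{k,w}y$ at the first coordinate $k$ where $w$ and $\tilde w$ disagree and extracting a monotonicity contradiction. Both are valid; yours is shorter and more structural (it rides entirely on the previous lemma), while the paper's is self-contained at the level of twist. The adherence-of-endpoints argument is essentially the same contradiction the paper sketches, and the pre-periodic case (finite orbit is closed, so adherence = membership) is the intended observation.

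One real gap: you write ``$B(w)=[a,b]$'' and your adherence argument applies the characterization lemma to the pair $(x,a)$, which requires $b(a,w)=0$, i.e.\ $a\in B(w)$. You proved convexity of $B(w)$ but never that $B(w)$ is \emph{closed}, so the endpoints belonging to $B(w)$ is unjustified. The paper supplies exactly this missing step: $x\mapsto b(x,w)=V(x)+V^*(w)+I^*(w)-W(x,w)$ is continuous in $x$ (for fixed $w$ with $I^*(w)<\infty$), so $B(w)$ is a closed subset of a convex set, hence a compact interval $[a,b]$ with $a,b\in B(w)$. You should add this one line; without it the contradiction ``$x\in B(w)$, contradicting minimality of $a$'' is not yet available because you have not placed $a$ itself inside $B(w)$.
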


For a proof see \cite{LOS}.

\begin{lemma}\label{iso}
Let $c \in \Sigma$ be the turning point.  Let us suppose the $c$ is
isolated from his orbit, which means that, there is $d,e,w^{-}$ and
$w^{+}$, such that, $b(x,w^{-})=0$, for any $x \in (d, c]$, and,
$b(x,w^{+})=0$, for any $x \in [ c, e)$,  then, there is no
accumulation points of the orbit of $c$. In this case $c$ is
eventually periodic.
\end{lemma}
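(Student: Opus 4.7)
The plan is to argue by contradiction: suppose the forward orbit $\{T^n(c) : n\geq 0\}$ has an accumulation point $p\in\Sigma$. We will show that this forces $c$ to be eventually periodic, which immediately gives that the orbit is finite and hence has no accumulation points, establishing both claims of the lemma. Two preliminary facts drive the argument. First, every $T^n(c)$ carries two distinct optimal sequences $w_L^{(n)}, w_R^{(n)}$, namely the one-sided limits of the monotone multivalued map $x\mapsto w_x$ at $T^n(c)$: these limits exist by monotonicity, are distinct because the Characterization of optimal change lemma forces $w_x\neq w_{x'}$ whenever $T^n(c)\in(x,x')$, and they are genuine optima at $T^n(c)$ by lower semicontinuity of the function $b$. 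Second, the isolation hypothesis forces $T^n(c)\notin(d,c)\cup(c,e)$ for every $n\geq 1$: otherwise the same characterization lemma would yield a jump of the optimum strictly inside one of the constant-optimum intervals $(d,c]$ or $[c,e)$. So unless some $T^n(c)=c$ (in which case $c$ is already periodic), we have $T^n(c)\in\Sigma\setminus(d,e)$ for $n\geq 1$, and in particular $p\neq c$.

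Next, extract a subsequence $T^{n_k}(c)\to p$ with $T^{n_k}(c)\neq p$ and, by symmetry, with $T^{n_k}(c)\nearrow p$ from below. Examine the arrangement of the intervals $B(w)$ immediately to the left of $p$. Case~A: some single interval $B(u)$ contains an entire half-neighborhood $[\alpha,p]$ with $\alpha<p$. Then for large $k$, $T^{n_k}(c)\in(\alpha,p)$ and a whole neighborhood of $T^{n_k}(c)$ sits in $B(u)$, forcing both one-sided optima $w_L^{(n_k)}, w_R^{(n_k)}$ to equal $u$, contradicting their distinctness. Case~B: infinitely many intervals $B(v_j)=[a_j,b_j]$ with distinct $v_j$'s accumulate at $p$ from below, with $b_j\nearrow p$. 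By monotonicity $v_j\to v_\infty$ in $\Sigma$, and $v_\infty$ is an optimum at $p$ by semicontinuity; the same dichotomy applied on the right side of $p$ yields a right-limit optimum $u$ at $p$. If $u=v_\infty$, then a neighborhood of $p$ would sit in a single $B(w)$, contradicting that boundary-type orbit points accumulate at $p$ (by the Case~A argument re-applied to $p$). So $u\neq v_\infty$, and they first differ at some coordinate $m$; for $j$ large enough, $v_j$ agrees with $v_\infty$ (hence with $u$) up to coordinate $m-1$ and disagrees at $m$, so the characterization lemma applied to $x=b_j$ and $x'$ slightly greater than $p$ yields $T^m(c)\in(b_j,x')$. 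Shrinking these intervals to $\{p\}$ forces $T^m(c)=p$.

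In the non-contradictory branch we therefore have $p=T^m(c)$. Now perform a backward-shift argument: by compactness, pass to a further subsequence so that $T^{n_k-m}(c)\to q$ for some $q\in\Sigma$; continuity of $T^m$ gives $T^m(q)=p=T^m(c)$, so $q$ is a pre-image of $p$ under $T^m$. If $q=c$, the orbit either contains $c$ infinitely often (periodicity) or accumulates at $c$, the latter contradicting isolation. If $q\neq c$, then $q$ is a new accumulation point of the orbit with $q\neq c$; re-running the Case~A / Case~B dichotomy on $q$ produces either a contradiction or $q=T^{m'}(c)$ for some $m'$, in which case $T^{m+m'}(c)=T^m(q)=T^m(c)$, so $c$ is eventually periodic, as desired. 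The most delicate point in the plan is Case~B of the previous paragraph: the characterization lemma must be invoked in a limiting fashion to upgrade the geometric statement ``$p$ is the limit of the $b_j$'s'' into the dynamical identity ``$p=T^m(c)$''. Once that identification is available, the pre-image argument closes the loop, and the absence of accumulation points follows from the orbit being finite.
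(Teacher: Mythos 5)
Your approach is genuinely different from the paper's. The paper exploits the isolation hypothesis directly to build a finite partition of $\Sigma$ using the first $N$ iterates of $c$ (with $N$ chosen so that $\lambda^{N-1}$ is less than the isolation radius), and then shows, by pulling each cell back $k$ times via $\T^{-1}$ into the interval $(d,e)$ where the optimum splits cleanly at $c$, that each cell can contain at most one additional iterate $T^k(c)$. This yields a hard bound $\#\{T^n(c)\}\le 2N$, and finiteness gives everything at once. Your argument instead assumes an accumulation point $p$ and tries to identify $p$ with an orbit point by analyzing the $B(w)$-interval structure near $p$; it makes much lighter use of the isolation hypothesis (only to get $T^n(c)\notin(d,e)\setminus\{c\}$, hence $p\ne c$).

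The central gap is in Case B, sub-case $u=v_\infty$. You claim that $u=v_\infty$ would force a full neighborhood of $p$ to lie in a single $B(w)$ so that the Case~A contradiction applies, but this does not follow. What actually happens is that $B(v_\infty)$ is of the form $[p,\beta]$ with $p$ its \emph{left endpoint}, while from the left the intervals $B(v_j)=[a_j,b_j]$ accumulate at $p$ with $v_j\searrow v_\infty$; no interval covers a two-sided neighborhood of $p$. Worse, this sub-case is not a corner case but the \emph{typical} one when iterates accumulate. Indeed, at $T^{n_k}(c)$ the two one-sided optima agree on the first $n_k$ coordinates (iterating FR1 $n_k$ times pulls $T^{n_k}(c)$ back to a $\lambda^{n_k}$-neighborhood of $c$, where the optimum splits into $w^-$ and $w^+$), so they are within distance $\lambda^{n_k}$ of one another; as $n_k\to\infty$ both one-sided optima at $T^{n_k}(c)$ converge to a common limit, and by monotonicity this limit is precisely the left-limit optimum $v_\infty$ at $p$. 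Thus your $u\ne v_\infty$ branch, which is where you actually extract $p=T^m(c)$ from the Characterization lemma, is exactly the branch that tends \emph{not} to occur, and the sub-case you dismiss is the one that carries the weight of the proof. There is no apparent way to extract a contradiction or the identity $p=T^m(c)$ from $u=v_\infty$ without invoking the isolation hypothesis more seriously, essentially reconstructing the paper's pull-back-and-partition argument. The closing paragraph, where you ``re-run the dichotomy'' on the preimage accumulation point $q$, inherits the same gap and is also structurally informal (the recursion is not shown to terminate or to avoid the problematic sub-case).

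A minor additional point: your ``first preliminary fact'' that every $T^n(c)$ carries two distinct one-sided optima is asserted but not proved; what the Characterization lemma gives directly is that no $T^n(c)$ can lie in the interior of any $B(u)$ (which is what your Case~A contradiction really uses), not that the one-sided limit optima at $T^n(c)$ are necessarily distinct. Fortunately Case~A does not need the stronger statement, but it would be cleaner to phrase the contradiction as: $T^{n_k}(c)$ interior to $B(u)$ contradicts the Characterization lemma applied to nearby $x<T^{n_k}(c)<x'$ both in $\mathrm{int}\,B(u)$.
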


For a proof see \cite{LOS}.

{\bf Remark 4:} The main problem we have to face is the possibility
that the orbit of $c$ is dense in $[0,1]$.
\bigskip

If $c$ is eventually periodic there exist just a finite number
intervals $B(w)$ with positive length. The other $B(w)$ are reduced
to points and they are also finite.

\begin{lemma} Suppose $A$ satisfies the twist and the countable condition.
Then there is at least one $B(w)$ with positive length of the form
$(T^n (c), T^m (c))$. Moreover, for each subinterval $(a,b)$ there
exists at least one $B(w)$ with positive length of the form $(T^n
(c), T^m (c))$ contained on it. Therefore, there exists an infinite
number of such intervals.

\end{lemma}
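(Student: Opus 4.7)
My plan is to combine a measure-theoretic consequence of the countable condition with the structural description of the partition $\{B(w)\}_w$ from the preceding lemma. Write $O=\{T^n(c):n\geq 0\}$ for the forward orbit of $c$ and $\overline{O}$ for its closure in $\Sigma$.

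First, identifying $\Sigma$ lexicographically with $[0^\infty,1^\infty]$, the countable condition makes $\{B(w)\}_w$ a countable partition of $\Sigma$ into closed intervals. Since $\Sigma$ is uncountable and a countable collection of singletons has Lebesgue measure zero, at least one $B(w)$ must have positive length; the same observation applied to the restriction of the partition to any subinterval $(a,b)$ produces some $B(w)$ of positive length inside $(a,b)$.

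Second, I would show that the closure $\overline{O}$ is itself countable. By the preceding lemma, each positive-length $B(w)=[u,v]$ has its endpoints in $\overline{O}$ while its interior is disjoint from $\overline{O}$ (no orbit point can lie in the interior by the characterization of optimal change, and an accumulation point in an open interval would force orbit points there). Hence every point of $\overline{O}$ is either a singleton $B(w)$ or an endpoint of a positive-length $B(w)$, and both collections are at most countable. A countable closed subset of $\Sigma$ is scattered by Cantor--Bendixson, so its set $S_0$ of isolated points is dense in $\overline{O}$. Moreover, an isolated point of $\overline{O}$ must lie in $O$ itself, because otherwise it would be a limit of orbit points and hence not isolated; thus $S_0\subset O$.

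Third, for a subinterval $(a,b)$ that meets $\overline{O}$ in more than one point, I would pick two consecutive isolated orbit points $T^n(c)<T^m(c)$ in $(a,b)\cap S_0$, i.e.\ with no element of $S_0$ strictly between them. Density of $S_0$ in $\overline{O}$ then forces $(T^n(c),T^m(c))\cap\overline{O}=\emptyset$, because any point of $\overline{O}$ inside that open interval would be approximated on both sides by points of $S_0$, producing a third isolated orbit point strictly between $T^n(c)$ and $T^m(c)$. Consequently $(T^n(c),T^m(c))$ is a complementary interval of $\overline{O}$ and, by Step one, coincides with the interior of a positive-length $B(w)$. The first assertion is the special case $(a,b)=\Sigma$, and applying the argument to a sequence of shrinking subintervals yields an infinite family of such intervals.

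The main obstacle is Step three: guaranteeing the existence of two consecutive points of $S_0$ inside $(a,b)$. This relies on the scattered Cantor--Bendixson structure of $\overline{O}$ and may require a short descent through derived sets when $(a,b)$ meets $\overline{O}$ in only finitely many points or near a higher-rank accumulation point; the bookkeeping to ensure that the chosen pair of isolated orbit points actually sits inside $(a,b)$ is the delicate part of the argument.
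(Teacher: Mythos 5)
Your proposal takes a genuinely different route from the paper's. The paper argues by contradiction with a Cantor-set construction: assuming every positive-length $B(w)$ has both endpoints accumulated from outside by the orbit of $c$, one removes such intervals nestedly, obtaining an uncountable residual set whose members would all need distinct (singleton) optimal $w$'s, contradicting the countable condition. Your argument instead first establishes the structural fact that the orbit closure $\overline{O}$ is countable (a statement the paper never isolates explicitly, and which is a useful observation in its own right), then invokes Cantor--Bendixson to locate consecutive isolated points of $\overline{O}$, whose gap must be the interior of a $B(w)$ of the required form. Your Steps~1 and~2 are sound: the $B(w)$'s partition $\Sigma$; the interior of a positive-length $B(w)$ carries no point of $\overline{O}$ (no orbit point by the characterization of optimal change, hence no accumulation point since the interior is open); so $\overline{O}$ is covered by the countably many singleton-$B(w)$'s and the countably many endpoints of positive-length $B(w)$'s.

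The difficulty is exactly where you flag it, in Step~3, and it is more substantial than your last paragraph suggests. Knowing that the isolated points $S_0$ are dense in the scattered set $\overline{O}$ does \emph{not} directly give a pair of order-consecutive elements of $S_0$ inside a given $(a,b)$: starting from an isolated $p\in S_0\cap(a,b)$ and letting $q=\inf\{z\in\overline{O}:z>p\}$, the point $q$ is the next element of $\overline{O}$ but need not be isolated (it may be accumulated from the right), so $(p,q)$ is a complementary interval of $\overline{O}$ with only one orbit-point endpoint, and the $B(w)$ it determines is not of the form $[T^n(c),T^m(c)]$. To rule out the scenario in which \emph{every} complementary interval of $\overline{O}$ inside $(a,b)$ has a non-isolated endpoint, one genuinely has to run a transfinite Cantor--Bendixson descent (or an equivalent measure/length argument showing that $S_0\cap(a,b)$ cannot be order-dense in itself, since its metric closure lies in the countable set $\overline{O}$), and then propagate the conclusion down to the level where a pair of mutually adjacent isolated points appears. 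This is doable, but it is the heart of the argument and is left unproved in your sketch; as written the proposal has a real gap there. By contrast, the paper's Cantor-set contradiction bypasses the need to produce consecutive isolated points explicitly, at the cost of a somewhat opaque nested-removal construction. Both approaches also implicitly assume that the chosen subinterval $(a,b)$ is not entirely swallowed by a single $B(w)$; this is a looseness in the statement of the lemma itself rather than a defect specific to either proof.
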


Denote the possible $w$, such that, $I^*(w)< \infty$, by $w_j$, $j
\in \mathbb{N}$.

For each $w^j$, $j \in \mathbb{N}$, denote $I_j=B(w^j)$, the maximal
interval where for all $x \in I_j$, we have that, $(x,w^j)$ is an
optimal pair. Some of these intervals could be eventually a point,
but, an infinite number of them have positive length, because the
set $\Sigma$ is not countable. We consider from now on just the ones
with positive length.

Note that by the same reason, in each subinterval $(e,u)$, there
exists an infinite countable number of $B(w)$ with positive length.

We suppose, by contradiction, that each interval $B(w)=[a,b]$, with
positive length is such that, each side is approximated by a
sub-sequence of points $T^j(c)$.

Take one interval $(a_1,b_1)$ with positive length inside $\Sigma$.
There is another one $(a_2,b_2)$  inside  $(0^\infty,a_1)$, and one
more $(a_3,b_3)$ inside $(b_1,1^\infty)$.

If we remove from the interval $\Sigma$ these three intervals we get
four intervals. Using our hypothesis, we can find new intervals with
positive length inside each one of them. Then we do the same removal
procedure as before. This procedure is similar to the construction
of the Cantor set. If we proceed inductively on this way, the set of
points $x$ which remains after infinite steps is not countable. An
uncountable number of such $x$ has a different $w_x$. This is not
possible because the optimal $w_x$ are countable.

Then, the first claim of the lemma is true.

Given an interval $(a,b) \subset \Sigma$, we can do the same and use
the fact that $(a,b)$ is not countable.

$\blacksquare$\\

\begin{lemma}
Under the twist and the countable condition the turning point $c$ is
eventually periodic.

\end{lemma}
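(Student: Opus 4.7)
I would argue by contradiction: assume $c$ is not eventually periodic, so the forward orbit $\{T^{n}(c):n\ge 0\}$ is infinite, and derive a contradiction from the preceding lemma together with the characterization of optimal change.

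First, for every nonempty open subinterval $(a,b)\subset\Sigma$ the preceding lemma furnishes a positive-length $B(w)$ of the form $(T^{n}(c),T^{m}(c))$ contained in $(a,b)$. In particular the two orbit points $T^{n}(c)$ and $T^{m}(c)$ lie inside $(a,b)$. Since $(a,b)$ was arbitrary, the forward orbit of $c$ meets every open subinterval of $\Sigma$, so it is dense in $\Sigma$, and hence $K:=\overline{\{T^{n}(c):n\ge 0\}}=\Sigma$.

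On the other hand, applying the preceding lemma (for instance, to $(a,b)=\Sigma$) produces at least one positive-length $B(w_{0})$, whose interior $(\alpha,\beta)$ is a nonempty open set. By the characterization-of-optimal-change lemma, $w_{x}=w_{0}$ is constant on $(\alpha,\beta)$, so no orbit point $T^{n}(c)$ may lie in $(\alpha,\beta)$: if one did, choosing $x<T^{n}(c)<x'$ inside $(\alpha,\beta)$ would force $w_{x}\neq w_{x'}$ via the characterization, contradicting $w_{x}=w_{x'}=w_{0}$. Since $(\alpha,\beta)$ is open, the same constraint also forbids accumulation points of the orbit of $c$ inside $(\alpha,\beta)$, and we conclude $(\alpha,\beta)\cap K=\emptyset$, i.e., $(\alpha,\beta)\subseteq\Sigma\setminus K$. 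Combined with $K=\Sigma$ from the previous step, this gives $(\alpha,\beta)=\emptyset$, contradicting $\alpha<\beta$. Hence $c$ is eventually periodic.

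The essential work has already been packaged into the preceding lemma, whose Cantor-like construction is where the countable condition is actually used. The present lemma then comes out as the direct incompatibility between ``the orbit of $c$ meets every subinterval of $\Sigma$'' and ``some positive-length plateau of $w_{x}$ exists''. I do not anticipate any serious obstacle beyond faithfully combining these two outputs of the previous lemma.
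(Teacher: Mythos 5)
Your argument has a genuine gap at the density step, and it is in fact circular: the very lemma you invoke to prove density also shows that the orbit of $c$ cannot be dense, so the two halves of your argument are mutually inconsistent for reasons that have nothing to do with the hypothesis ``$c$ not eventually periodic.''

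Concretely, you deduce that the forward orbit of $c$ is dense by applying the preceding lemma to an \emph{arbitrary} open subinterval $(a,b)$. But that lemma cannot apply to subintervals that sit strictly inside the interior of a single positive-length $B(w_0)$. For $x$ in that interior the optimal sequence is identically $w_0$, so the only interval $B(w)$ meeting $(a,b)$ is $B(w_0)$ itself, and $B(w_0)$ is not contained in the strictly smaller $(a,b)$. Hence no positive-length $B(w)=(T^n(c),T^m(c))$ can lie inside such an $(a,b)$, and the preceding lemma produces no orbit point there. As you correctly argue in the second half of your proof (via the characterization-of-optimal-change lemma), the orbit of $c$ never enters the interior of \emph{any} positive-length $B(w)$. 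So the orbit is precisely \emph{not} dense, and its closure is instead the Cantor-like complement of $\bigcup_j \mathrm{int}\,B(w_j)$. This is perfectly compatible with $c$ not being eventually periodic: the $B(w_j)$ may accumulate in a complicated way, and the orbit of $c$ lives on the residual set. No contradiction emerges.

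This is why the paper's proof is so much longer. After the preceding lemma it never attempts a density argument; it pulls the intervals $[T^{n_j}(c),T^{m_j}(c)]$ back to $c$ along the inverse branches that follow the orbit, uses the fact that each $\tau_i$ preserves order, and analyzes whether $c$ becomes isolated from its orbit on one side or both. In the delicate case where one side is never isolated it shows all offsets $m_j-n_j$ equal a fixed $k$, which forces the relevant accumulation points to be periodic of period $k$; since there are only finitely many period-$k$ points, it concludes that $c$ is eventually periodic. That combinatorial analysis of the return structure around $c$ is the actual content of the lemma, and the preceding lemma alone is not enough to replace it.
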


{\it Proof:}

Denote by $w^j$, $j \in \mathbb{N}$, the countable set of pre-images
of the periodic orbit maximizing $A^*$.

For each $w^j$, $j \in \mathbb{N}$, denote $I_j=B(w^j)$, the maximal
interval where for all $x \in I_j$, we have that, $(x,w^j)$ is an
optimal pair. Some of these intervals could be, eventually, a point,
but, an infinite number of them have positive length, because the
set $\Sigma$ is not countable

We will suppose $c$ is not eventually periodic, and, we will reach a
contradiction. Therefore, if $T^n(c) = T^m (c),$ then $m=n$.

From, now on we consider just $j$, such that, for the corresponding
$w_j$, the interval $I_j$ has positive length, and, it is of the
form $[T^n(c), T^m(c)]$, $m,n\geq 1$. From last lemma there exist an
infinite number of them.

Denote by $I_j=[a_j,b_j]$. We denote $I_0$ the interval of the form $[0^\infty,b_0]$, and,  $I_1$ the interval of the form $[a_0,1^\infty]$. From last lemma, for $j\neq 0,1$, there is
$n_j$ and $m_j$, such that $a_j= T^{n_j}(c)$ and
$b_j= T^{m_j}(c)$.

Consider the inverse branch $\tau_{i_1^j}$, where $i_1^j=i_1$ is
such that $\tau_{i_1}((T^{n_j})(c) )= T^{n_j-1}(c)$. This $i_1$ {\bf
do not have to be the first symbol of the optimal $w$  for
$T^{n_j}(c)$}. Then, $\tau_{i_1}(I_j)$ is another interval, which is
strictly inside a domain of injectivity of $T$, does not contain any
forward image of $c$, and in its left side we have the point
$T^{n_j-1}(c)$.

 Then, repeating the same procedure
inductively, we get $i_2$, such that $\tau_{i_2}((T^{n_j-1})(c) )=
T^{n_j-2}(c)$, determining another interval which does not contain
any forward image of $c$, and in his left side we have the point
$T^{n_j-2}(c)$. Repeating the reasoning over and over again, always
taking the same inverse branch which contain $T^n(c)$, $0\leq n \leq
n_j$, after $n_j$ times we arrive in an interval of the form $(c,
r_j)$. Note that each inverse branch preserves order. It is not
possible to have an iterate $T^k(c)$, $k \in \mathbb{N}$, inside
this interval $(c, r_j)$ (by the definition of $I_j$). Then, the
optimal $w$ for $x$ in this interval $(c, r_j)$ is a certain
$\tilde{w_j}$ which can be different of $\sigma^{n_j} (w_j).$

Suppose now that  $n_j>m_j$. Using the analogous procedure we get
that there exists $r^j$, such that the optimal $w_x$ for $x$ in the
interval $(r^j, c)$ is $\sigma^{n_j} (w_j).$

If both cases happen, then $c$ is eventually periodic.

The trouble happens when just one type of inequality is true.
Suppose without lost of generality that we have always $n_j<m_j$,
for all possible $j$.

Let's fix for good a certain $j$.

Therefore, all we can get with the above procedure is that  $c$ is
isolated by the right side

In the procedure of taking pre-image of $T^{n_j}(c)$, always
following the forward orbit $T^n(x)$, $0\leq n \leq n_j$, we will
get a sequence of $i_1,i_2,...,i_{n_j}$.  In the first step  we have
two possibilities: $\tau_{i_1} (T^{m-j}(c)) = T^{m_j-1}(c)$, or not.

If it happens the second case, we are done.  Indeed, the interval
$\tau_{i_1} [T^{n_j}(c),T^{m_j}(c))]$ does not contain forward
images of $c$ (otherwise $[T^{n_j}(c)),T^{m_j}(c))]$ would also
have). Now we follow the same procedure as before, but, this time
following the branches which contains the orbit of $T^m (c)$, $0\leq
m\leq m_j$. In this way, we get that $c$ is isolated by the left
side.

Suppose  $\tau_{i_1} (T^{m_j}(c))  = T^{m_j-1}(c)$. Consider the
interval, $[T^{n_j-1}(c)),T^{m_j-1}(c))]$, which do not contain
forward images of $c$.

Now, you can ask the same question:  $\tau_{i_2} (T^{m_j-1}(c)) =
T^{m_j-2}(c)$? If this do not happen (called the second option),
then, in the same way as before, we are done ($c$ is also isolated
by the right side). If the expression is true, then, we proceed with
the same reasoning as before.

We proceed in an inductive way until time $n_j$. If in some time we
have the second option, we are done, otherwise, we show that any
$x\in (c, T^{m_j-n_j}(c))$ has a unique optimal $w_x$ (there is no
forward image of $c$ inside it).

Denote $k=m_j-n_j$ for the $j$ we fixed.

From the above we have that for any $B(w)$, which is an interval of
the form $[T^{n_i}(c)),T^{m_i}(c))]$, for any possible $i$, it is
true that $m_i-n_i=k.$ There are an infinite number of intervals of
this form.

We claim that the set of points $x$ which are extreme  points of any
$B(\tilde{w})$,  and, such that $x$ can be approximated by the
forward orbit of $c$ is finite. Suppose without lost of generality
that $x$ is the right point of a $B(w)=(z,x)$.

If the above happens, then, by the last lemma, we have an infinite
sequence of intervals of the form $[T^{n_i}(c)),T^{n_i+k}(c))]$,
such that $T^{n_i}(c)\to x$, as $n_i\to \infty.$ Therefore, $x$ is a
periodic point of period $k$. There are a finite number of points of
period $k$. This shows our main claim. Finally, $c$ is eventually
periodic.

$\blacksquare$\\

\begin{defi}
We denote by $\mathbb{G}$ the  set of Holder potentials such that

 1) the maximizing probability is unique, and it is a periodic orbit;

 2) The potential $A$ satisfies the twist condition;

 3) $R^*$ is good for $A^*$.
\end{defi}

From the above one can get:

\begin{theorem} Suppose $A$ is in $\mathbb{G}$,
then, for any $x$, there exists a finite number of possible $w_x$ such that $(x,w_x)$
are optimal pairs. If we denote by $I_j=(a_j,b_j)$, $j=1,2,..,n$,
the maximal open intervals where for $x\in I_j$ the $w_x$ is
constant, then, just on the points $x=a_j$, or $x=b_j$, we can get
two different $w_x$, which define points $(x,w_x)$ in the optimal
set $\mathbb{O}(A)$.

 \end{theorem}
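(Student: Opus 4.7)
The plan is to package together the preceding lemmas. Since $A \in \mathbb{G}$, the potential $A$ is good, and by the proposition on goodness together with the remark that follows it, $A$ satisfies the countable condition. Combined with the twist condition (also built into $\mathbb{G}$), the preceding lemma applies and yields that the turning point $c$ is eventually periodic. In particular, the forward orbit $O^+(c) = \{T^n(c) : n \geq 0\}$ is a finite subset of $\Sigma$; order its distinct elements as $\xi_1 < \xi_2 < \cdots < \xi_N$.

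I would then invoke the Characterization of optimal change lemma: for $x < x'$ with optimal pairs $(x,w)$ and $(x',w')$, we have $w \neq w'$ if and only if some $T^n(c)$ lies in $[x,x']$. Consequently, on each open interval $(\xi_i, \xi_{i+1})$, and likewise on $(0^\infty, \xi_1)$ and $(\xi_N, 1^\infty)$ if non-empty, any two points share the same optimal $w$; this common value is a single element $w^{(j)} \in \Sigma$. Enumerating these maximal open intervals of constancy as $I_1 = (a_1,b_1), \ldots, I_n = (a_n,b_n)$ with $n \leq N+1$, the set of optimal $w$'s arising as $x$ ranges over $\Sigma$ is contained in $\{w^{(1)}, \ldots, w^{(n)}\}$, hence is finite; a fortiori, for every $x$ the set of optimal $w_x$ is finite.

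For the second assertion, pick any $x$ that is not an endpoint $a_j$ or $b_j$; then $x$ lies in a unique interval $I_j$. If $(x,w)$ were optimal with $w \neq w^{(j)}$, choose some $x' \in I_j$ with $x' \neq x$; the Characterization lemma applied to the pair $(x,x')$ (in whichever order) would force some $T^n(c) \in [\min(x,x'),\max(x,x')]$, which is impossible since this interval is contained in $I_j \subset \Sigma \setminus O^+(c)$. Thus $w_x = w^{(j)}$ uniquely in the interior of each $I_j$. At a shared endpoint $\xi_i = b_j = a_{j+1}$, the fact that $B(w) = \{x : b(x,w) = 0\}$ is closed (by continuity of $b$ in $x$, from the corresponding earlier lemma) gives $\overline{I_j} \subset B(w^{(j)})$ and $\overline{I_{j+1}} \subset B(w^{(j+1)})$, so both $(\xi_i, w^{(j)})$ and $(\xi_i, w^{(j+1)})$ are optimal pairs, with $w^{(j)} \neq w^{(j+1)}$ by the maximality of the intervals.

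The argument is essentially bookkeeping once the previous lemma on eventual periodicity of $c$ under twist plus the countable condition is in hand. The most delicate step I anticipate is the uniqueness of $w_x$ at a single interior point of $I_j$: the Characterization lemma is formulated for $x \neq x'$, so strict single-point uniqueness must be extracted by perturbing to a nearby point in $I_j$ and exploiting the absence of any iterate of $c$ in between, rather than being read off directly.
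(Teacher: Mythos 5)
Your proof is correct and takes essentially the same route the paper intends: the paper itself only says ``from the above one can get'' and states the theorem, and your argument is precisely the assembly of the preceding lemmas (good $\Rightarrow$ countable condition, twist $+$ countable $\Rightarrow$ turning point $c$ eventually periodic, hence finite forward orbit, then the Characterization of Optimal Change lemma and closedness of $B(w)$ give the finite interval decomposition and the two-valuedness only at endpoints). The perturbation step you flag for interior uniqueness is a reasonable and correct way to extract it from the lemma as stated.
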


 Note that if $x$ is in the maximizing orbit for $A\in \mathbb{G}$, then,  at
 least one of the optimal $w_x$ is in the support of the maximizing probability for $A^*$. This point $x$ can be eventually in the extreme of one of this maximal intervals $I_J$. This do not contradicts the graph property

\begin{defi}
We denote by $\mathbb{A}$ the  open set of Holder potentials such that

 1) the maximizing probability is unique, and it is a periodic orbit;

 2) The potential $A$ satisfies the twist condition.
\end{defi}

 The next theorem shows the class we consider above is large.

\begin{theorem} The set $\mathbb{G}$ is generic in the open set $\mathbb{A}$.

 \end{theorem}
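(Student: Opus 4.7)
\emph{Proof plan.}
The extra condition for membership in $\mathbb{G}$ over $\mathbb{A}$ is that $R^*(w)>0$ for every $w$ in the finite set $P=\{w\in\Sigma : \sigma(w)\in M,\ w\notin M\}$, where $M$ is the support of the periodic maximizing probability. I will show that $\mathbb{G}$ is in fact open and dense in $\mathbb{A}$, which is more than generic. Given $A_0\in\mathbb{A}$ with maximizing orbit $M_0$ of period $p$, the standard lock-in property for unique periodic maximizing orbits (from uniqueness, periodicity, and continuity of $A\mapsto\int A\,d\nu$ on invariant measures) provides a H\"older neighborhood $U\subset\mathbb{A}$ of $A_0$ on which the maximizing orbit is constantly $M_0$. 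Thus $P$ is the same finite set of cardinality $p(d-1)$ throughout $U$, and it suffices to show that $\mathbb{G}\cap U=\bigcap_{w\in P}\mathcal{G}_w$, with $\mathcal{G}_w:=\{A\in U : R^*_A(w)>0\}$, is open and dense in $U$.

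\emph{Openness.}
Normalize $V^*_A(m_1)=0$ for a distinguished $m_1\in M_0$. The Baraviera--Lopes--Thieullen formulas $W_A(w,x)=\Delta_A(w,\ox,x)$ and $A^*_A(w)=A(\tau_w\ox)+W_A(\sigma w,\tau_w\ox)-W_A(w,\ox)$ make $A\mapsto W_A$ and $A\mapsto A^*_A$ continuous in the H\"older norm; the zero-temperature limit $V^*_A=\lim_{\be\to\infty}\be^{-1}\log\phi_{\be A^*_A}$ together with the uniform spectral gap of the Ruelle operator on $U$ yields continuity of $A\mapsto V^*_A$. Hence each $A\mapsto R^*_A(w)$ is continuous, each $\mathcal{G}_w$ is open, and so is the finite intersection $\mathbb{G}\cap U$.

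\emph{Density by perturbation.}
Fix $w_0\in P$ and $A\in U$ with $R^*_A(w_0)=0$; I construct an arbitrarily small H\"older perturbation $A+\e\psi$ with $A+\e\psi\in\mathcal{G}_{w_0}$. Choose $\psi$ H\"older, supported in a cylinder $[a]\subset\Sigma$ of large combinatorial depth $k$ (the length of the word $a$), such that $[a]$ contains $\tau_{w_0}\ox$ but is disjoint from $M_0$ and from cylinders meeting the dominant backward orbits realizing $V^*_A(w_0)$ and $V^*_A(\sigma w_0)$. Because $\psi$ vanishes near $M_0$, the maximizing orbit of $A+\e\psi$ is still $M_0$, and twist (an open condition) is preserved for small $\e$. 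The involution-kernel formula then gives, to first order in $\e$, the estimate $\delta A^*(w_0)=\e\,\psi(\tau_{w_0}\ox)+O(\la^{k})$, while $\delta A^*(m)$ for $m\in M_0$ and $\delta V^*(w_0),\,\delta V^*(\sigma w_0)$ are all of order $O(\la^{k})$. Choosing the sign of $\psi$ so that $\psi(\tau_{w_0}\ox)<0$ and $k$ sufficiently large yields $\delta R^*(w_0)>0$ to leading order, hence $A+\e\psi\in\mathcal{G}_{w_0}$ for small $\e>0$.

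\emph{Main obstacle and conclusion.}
The delicate step is the estimate $\delta V^*=O(\la^{k})$: although $V^*(\sigma w_0)$ is essentially determined by the values of $A^*$ on $M_0$ (unperturbed to order $\la^{k}$), the calibrated $V^*$ involves a supremum and is not a priori smooth. My plan is to replace $V^*$ by $\be^{-1}\log\phi_{\be A^*}$, use the uniform contraction of the normalized Ruelle operator on cylinders disjoint from $[a]$ together with the H\"older regularity of $A^*_A$ to bound the contribution of $\psi$ to $\phi_{\be A^*}(w_0)$ and $\phi_{\be A^*}(\sigma w_0)$ by $O(\la^{k})$ uniformly in $\be$, and then pass to the zero-temperature limit. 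With this in place, each $\mathcal{G}_{w_0}$ is dense in $U$; therefore $\mathbb{G}\cap U$ is open and dense in $U$, and patching over a cover of $\mathbb{A}$ by such neighborhoods shows $\mathbb{G}$ is open and dense, in particular generic, in $\mathbb{A}$.
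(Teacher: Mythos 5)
Your route is genuinely different from the paper's. The paper (Section 4, Theorem~\ref{MTP}) never estimates $R^*$ directly: it first proves (Theorem~\ref{TR}) that the set $\cR$ of potentials whose Aubry set equals the support of the unique minimizing measure contains a residual set, observes that $\cR$ is invariant under addition of coboundaries, builds the continuous linear map $L\colon\cF/\cB\to\cF^*/\cB^*$ with inverse $L^*$ (Lemma~\ref{LLL}), and defines $\cQ=\cR\cap L^{-1}(\cR^*)$ as a residual set; the conclusion $R^*>0$ on $P$ then falls out of Corollary~\ref{CPR}. Your approach instead fixes a lock-in neighborhood $U$, where $P$ is constant and finite, and attempts openness and density of each $\cG_w$ by a direct perturbation of $A$ in a deep cylinder. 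The paper's route buys freedom from all pointwise estimates on $V^*$: the key statement is a structural one about Aubry sets, transported through the coboundary quotient, and residuality comes from the $G_\delta$ form $\cR=\bigcap_n\cU(1/n)$ rather than from openness. Your route, if completed, would buy more (openness, not merely genericity), but at the price of delicate quantitative estimates.

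As submitted, however, there is a genuine gap. You flag it yourself: the bound $\delta V^*(w_0),\,\delta V^*(\sigma w_0)=O(\lambda^k)$ is only a \emph{plan}, and it is precisely the hard part. The difficulty is not cosmetic. When you perturb $A$ inside a cylinder $[a]$ around $\tau_{w_0}\ox$, the involution-kernel formula
$A^*(\om)=A(\ox)+\sum_{n\ge0}\bigl[\A(\T^{-n}(\tau_\om\ox,\sigma\om))-\A(\T^{-n}(\ox,\sigma\om))\bigr]$
shows that $\delta A^*(\om)$ is a nontrivial series for \emph{every} $\om$, not only for $\om=w_0$; the backward orbits $\tau_{n,\sigma\om}(\ox)$ sweep through the space and reenter $[a]$ for infinitely many $\om$ and $n$. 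Controlling the net effect on $A^*|_{M_0}$, and then on $V^*(w_0)$ and $V^*(\sigma w_0)$ through the calibrated variational principle (or through $\beta^{-1}\log\phi_{\beta A^*}$ uniformly in $\beta$ and then in the zero-temperature limit), is exactly the kind of estimate the paper's coboundary-quotient device is designed to avoid. Moreover there is a sign-and-tie-breaking subtlety you do not address: $R^*(w_0)=0$ means the calibration supremum defining $V^*(\sigma w_0)$ is already attained at $w_0$ as well as at the preimage of $\sigma w_0$ in $M_0$; decreasing $A^*(w_0)$ increases $R^*(w_0)$ only if the supremum then detaches from $w_0$, and showing this while also controlling $\delta V^*(w_0)$ requires an argument, not a first-order Taylor heuristic.

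The openness half also deserves a caveat. You invoke continuity of $A\mapsto V^*_A$, but in the paper this is established (Lemma~\ref{LPL}, item \eqref{lpl6}) by appealing to Lemma~\ref{PR}.\eqref{PR1}, whose hypothesis is $A\in\cR$, i.e.\ $\A(A)=\operatorname{supp}(\mu)$ --- not merely $A\in\mathbb{A}$. For $A\in\mathbb{A}\setminus\cR$ (and Leplaideur's Example~\ref{ExRen} shows such $A$ exist in spirit) the calibrated subaction is not a priori unique, so $R^*_A$ is not obviously a single-valued continuous function of $A$. Your openness claim for $\cG_w$ therefore needs either to be restricted to $\cR\cap\mathbb{A}$ or to come with a separate argument for the continuous selection of $V^*$. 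In short: the structure (finite $P$, lock-in, $\cG_w$ open and dense, Baire) is a reasonable strategy, but both halves require substantial work that the proposal does not supply, and the paper deliberately routes around these estimates.
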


The proof of this result will be done in the next two sections (see Theorem \ref{MTP} bellow).

\begin{corollary} For any $A \in \mathbb{ G}$, the value $c=c_A$ is given by the expression
$$ c=\inf\{x\,|\, V(x) - V( \tau_1 x) - A (\tau_1 (x))>0\},$$
where $V$ is any calibrated subaction for $A$. Moreover,
$c_A$ is locally constant as a function of $A$.

\end{corollary}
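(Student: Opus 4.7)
The plan is to first derive the formula from the fundamental relation FR1 and the monotonicity criterion for the turning point, and then establish local constancy by combining continuous dependence of the dual data on the potential with the rigidity of $\mathbb{O}(A)$ for $A\in\mathbb{G}$ provided by the main theorem.

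For the formula, I would prove the equivalence
\[
R(\tau_1 x)=0 \iff \text{there exists an optimal } w \text{ for } x \text{ whose first symbol is } 1.
\]
Specializing FR1 to $w=(1,w_1,w_2,\ldots)$ gives $R(\tau_1 x)=b(x,w)-b(\tau_1 x,\sigma w)$. If such a $w$ is optimal for $x$, then $b(x,w)=0$, and non-negativity of $R$ and $b$ force $R(\tau_1 x)=0$. Conversely, if $R(\tau_1 x)=0$, pick any $w'$ optimal for $\tau_1 x$ and set $w=(1,w')$; then $\tau_w x=\tau_1 x$ and $\sigma w=w'$, so FR1 yields $b(x,w)=R(\tau_1 x)+b(\tau_1 x,w')=0$, exhibiting an optimal $w$ for $x$ starting with $1$. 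Combined with the monotonicity criterion stated just after the definition of $c$ — the optimal $w_x$ starts with $1$ precisely when $x\in[0^\infty,c]$ — this yields $\{x\mid R(\tau_1 x)>0\}=(c,1^\infty]$, whose infimum is $c$. Since the equivalence is intrinsic, the right-hand side does not depend on the chosen calibrated subaction.

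For the local constancy, fix $A_0\in\mathbb{G}$. I would use three ingredients: (i) the unique maximizing periodic orbit is locally constant in $\mathbb{A}$ under small H\"older perturbations; (ii) once the maximizing orbit is fixed, the normalized calibrated subactions $V_{A'},V^*_{A'}$, the involution kernel $W_{A'}$, and the deviation function $I^*_{A'}$ depend continuously on $A'$ in the uniform norm, so that $b_{A'}\to b_{A_0}$ and $R_{A'}(\tau_1\cdot)\to R_{A_0}(\tau_1\cdot)$ uniformly; (iii) by the main theorem, for $A_0$ the optimal set decomposes into finitely many maximal intervals $I_j$ whose interiors carry a unique optimal $w^{(j)}$, with a uniform gap separating $w^{(j)}$ from the other finitely many admissible optimal sequences. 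Thus on any compact subinterval of the interior of an $I_j\subset[0^\infty,c_{A_0}]$ the same $w^{(j)}$ remains the unique maximizer for $A'$ close to $A_0$, giving $R_{A'}(\tau_1 x)=0$; similarly $R_{A_0}(\tau_1 x)$ is bounded below on compact subintervals of $(c_{A_0},1^\infty]$, and this persists for $A'$ nearby.

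The main obstacle is to handle the degeneracy exactly at $c_{A_0}$, where two optimal preimages coexist and could a priori split under perturbation, causing a small shift of $c_{A'}$. I would rule this out by using that $c_{A_0}$ is an eventually periodic point of $T$ whose forward orbit enters the finite set of pre-images of the maximizing periodic orbit, a set preserved by (i); combined with the continuity $c_{A'}\to c_{A_0}$ coming from the formula of the first part, the discrete nature of this finite set of candidates forces $c_{A'}=c_{A_0}$ for all $A'$ in a sufficiently small neighborhood of $A_0$.
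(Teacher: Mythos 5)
Your derivation of the formula is correct and in fact sharper than what the paper writes. The paper asserts the formula with a one-line explanation (``we have to use $0$ as first symbol of the optimal $w_x$, when $x$ is on the right of $c$''), whereas you supply the exact equivalence on which it rests: $R(\tau_1 x)=0$ if and only if some optimal $w$ for $x$ has first symbol $1$. The forward direction (an optimal $w$ starting with $1$ forces $R(\tau_1 x)=0$) is precisely the monotonicity of $b$ along $\T^{-1}$-orbits from the lemma on FR1; the converse, obtained by gluing an optimal $w'$ for $\tau_1 x$ behind the symbol $1$ and applying FR1 to get $b(x,(1,w'))=R(\tau_1 x)+b(\tau_1 x,w')=0$, is the nontrivial half the paper does not write out. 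Combined with the main criterion at $c$ this gives $\{x : R(\tau_1 x)>0\}=(c,1^\infty]$ and hence the formula; this part of your proposal is a genuine improvement in precision, and your remark on well-definedness (intrinsicness in $V$) is correct since for $A\in\mathbb{G}$ the calibrated subaction is unique up to an additive constant.

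For local constancy your route differs from the paper's. The paper argues that $c$ lies in the countable preimage set of the (locally constant) maximizing orbit and that $V_A$ varies continuously with $A$, and then declares local constancy. You instead push the finite-interval decomposition given by the Main Theorem into a perturbative argument: freeze the maximizer $w^{(j)}$ on compact subintervals of each $I_j$ via a uniform gap, deduce $R_{A'}(\tau_1 x)=0$ there and $R_{A'}(\tau_1 x)>0$ on $[c_{A_0}+\e,1^\infty]$ for $A'$ close, and so obtain $c_{A'}\to c_{A_0}$; you then try to upgrade convergence to equality near $c_{A_0}$.

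That last upgrade is where a genuine gap remains. You appeal to ``the discrete nature of this finite set of candidates,'' but the set of eventually periodic points falling onto the fixed maximizing orbit is countable and \emph{dense} in $\Sigma$, not finite or discrete; knowing $c_{A'}$ is such a preimage and is close to $c_{A_0}$ does not by itself force $c_{A'}=c_{A_0}$, since there are preimages agreeing with $c_{A_0}$ on an arbitrarily long prefix yet entering the periodic orbit later. What would make the candidate set finite is a bound, uniform over a neighborhood of $A_0$, on the number of steps before $c_{A'}$ lands on the maximizing orbit, and neither you nor the gap argument of ingredient (iii) supply such a bound (the gap you use is positive only on compact subintervals of the interiors of the $I_j$ and degenerates at $c_{A_0}$ itself, which is exactly the point you need to control). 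In fairness, the paper's own argument (countable range plus continuity of $V$) has the same lacuna --- a continuous map into a countable dense set need not be locally constant --- so your proposal reflects the rigor of the original, but the step you yourself flagged as ``the main obstacle'' is not actually closed.
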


{\it Proof:}

The first claim follows from the fact that we have to use $0$ as first symbol of the optimal $w_x$, when $x$ is on the right of $c$.

If $R^*$ is good the point $c$ is in the pre-image of the support of the maximizing probability (which is locally constant by the continuous varying support property \cite{CLT}). Therefore, the possible $c$ are in a countable set.

Note that under the uniqueness hypothesis of the maximizing probability for $A$, the sub-action $V=V_A$ can be chosen in a continuous fashion with $A$. From this follows the last claim of the corollary. $\blacksquare$\\

Now we will provide a counterexample.

\begin{example}\label{ExRen}

The following example is due to R. Leplaideur.

We will show an example on the shift where the maximizing probability for a certain Lipschitz potential $A^* :\{0,1\}^\mathbb{N} \to \mathbb{R}$ is a unique periodic orbit $\gamma$  of period two, denoted by $ p_0=(01010101...),p_1=(10101010...)$, but for a certain point, namely,  $w_0=(110101010..)$, which satisfies $\sigma(w_0)= p_1$,
we have that $R^*(w_0)=0$.

\bigskip

The potential $A^*$ is given by $A^*(w)=-d(w, \gamma\cup \Gamma)$, where $d$ is the usual distance in the Bernoulli space. The set $\Gamma$ is described later.



\bigskip
For each integer $n$, we  define a  $2n+3$-periodic orbit $z_n,\sigma(z_{n}),\ldots,\sigma^{2n+2}(z_{n})$ as follows:

we first set
$$b_n=  (\underbrace{01010101\ldots01}_{2\, n}1\,01),$$
and the point $z_{n}$ is the concatenation of the word $b_{n}$: $ z_n=( b_n,\,b_n,\,...)$

The main idea here is to get a sequence of periodic points which spin around the periodic orbit $\{p_0,p_1\}$ during the time $2\,n$, and then
pass close by $w_0$ (note that $d(\sigma^{2\, n} (z_n) , w_0) = 2^{- 2 (n+1)}$).

Denote $\gamma_n$ the periodic orbit $\gamma_n=\{ z_n, \sigma(z_n),\, \sigma^2(z_n),\,...,\sigma^{ 2n+2}(z_n)\,\}.$

Consider the sequence of Lipschitz potentials $A^{*}_{n} (w) = -d(w,\gamma_n\cup \gamma).$ The support of the maximizing probability for $A^{*}_{n}$ is $\gamma_n\cup \gamma$.
Moreover
$$0=m(A^{*}_{n})=\max_{\nu \text{\, an invariant probability for $\sigma$}}
\int  A^{*}_{n}(w) \; d\nu(w).$$


Denote by $V^{*}_{n}$ a Lipschitz calibrated subaction for $A^{*}_{n}$ such that $V^{*}_{n}(w_0)=0$. In this way, for all $w$
$$ R^{*}_{n}(w) =  (V^{*}_{n} \circ\sigma-V^{*}_{n}-A^{*}_{n}) \, (w)\geq 0,$$
and for $w\in \gamma_n \cup \gamma$ we have that $R^{*}_{n}(w) =0.$

We know that $R^{*}_{n}$ is zero on the orbit $\gamma_n$, because $\gamma_{n}$ is included in the Masur set.

Note that we not necessarily have $R^{*}_{n}(w_0)=0$.

By construction, the Lipschitz constant for $A^{*}_{n}$ is $1$. This is also true for $V^{*}_{n}$. Hence the family of subactions $(V^{*}_{n})$ is a family of equicontinuous functions. Let us denote by $V^{*}$ any accumulation point for $(V^{*}_{n})$ for the $\mathcal{C}^0$-topology. Note that $V^{*}$ is also $1$-Lipschitz continuous. For simplicity we set
$$V^{*}=\lim_{k\to\infty}V^{*}_{n_{k}}.$$

We denote by $\Gamma$ the set which is the limit of the sets $\gamma_n$ (using the Hausdorff distance). $\gamma\cup \Gamma$ is a compact set. Note that $\Gamma$ is not a compact set, but the set of accumulation points for $\Gamma$ is the set $\gamma$. We now consider $A^{*}(w) = -d(w, \gamma\cup\Gamma)$.

As any accumulation point of $\Gamma$ is in $\gamma$, any maximizing probability for the potential $A^{*}$ has support in $\gamma$. On the contrary, the unique $\sigma$-invariant measure with support in $\gamma$ is maximizing for $A^{*}$.



Remember that for any $n$ we have $V^{*}_{n} (w_0) = 0$.We also claim that we have $A^{*}_{n_k} (w_0)\to 0$ and $V^{*}_{n_k} (\sigma(w_0)) \to 0,$ as $k \to \infty$.

For each fixed $w$ we set

$$ R^{*}_{n_k}(w) =  (V^{*}_{n_k} \circ\sigma-V^{*}_{n_k} -A^{*}_{n_k}) \, (w)\geq 0.$$
The right hand side terms converge (for the $\mathcal{C}^0$-topology) as $k$ goes to $+\infty$. Then $R^{*}_{n_{k}}$ converge, and we denote by $R^{*}$ its limit. Then  for every $w$ we have:
$$ R^{*}(w) =  (V^{*}\circ\sigma-V^{*}-A^{*}) \, (w)\geq 0.$$

This shows that $V^{*}$ is a subaction for $A^{*}$. Note that $R^{*}(w_0)=0$.
From the uniqueness of the maximizing probability for $A^{*}$ we know that there exists a unique calibrated subaction for $A^{*}$ (up to an additive constant).

Consider a fixed $w$ and its two preimages $w_a$ and $w_b$.
For any given $n$, one of the two possibilities occur $R^{*}_{n}(w_a) =0$ or $ R^{*}_{n}(w_b)=0$, because $V^{*}_{n}$ is calibrated for $A^{*}_{n}$.

Therefore, for an infinite number of values $k$ either $R^{*}_{n_k}(w_a) =0$ or $ R^{*}_{n_k}(w_b)=0$.

In this way the limit of $V^{*}_{n_k}$ is unique (independent of the convergent subsequence)  and equal to $V^{*}$, the calibrated subaction for $A^{*}$ (such that $V^{*}(w_0)=0$).

Therefore,
$$ R^{*}(w_0) =  (V^{*}\circ\sigma-V^{*}-A^{*}) \, (w_0)= 0,$$
and $V^{*}$ is  a calibrated subaction for $A^{*}(w) = d(w, \gamma \cup \Gamma)$.

\end{example}

\vspace{0.2cm}

\bigskip

\section{Generic continuity of the Aubry set.}

In this section and in the next  we will present the proof of the generic properties we mention before.

In order to do that we will need several general properties in Ergodic Optimization.

Remember that we will denote the action of the shift in the points $x$ by $T$, and, we leave $\sigma$ for the action of the shift in the coordinates $w$.

We will present our main results in great generality. First, in this section, we analyze the main properties of sub-actions and its dependence on the potential $A$.

First we will present the main definitions we will consider here.

$\cF\subset C^0(\Sigma,\re)$ denotes a complete metric space with a (topology finer than)
metric larger than $d_{C^0}(f,g)=\lV f-g\rV_0:=\sup_{x\in \Sigma}\lv f(x)-g(x)\rv$; (for instance,
H\"older functions, Lispchitz functions, etc)
AND such that
\begin{equation}\label{noanal}
\forall K \subset \Sigma \text{ compact },\,\, \exists \psi\in\cF \text{ s.t. }
\psi\le 0, \;\;[\psi=0]=\{x\,|\, \psi(x)=0\}=K.
\end{equation}

Given $A\in \cF$ and   $F$ a calibrated sub-action for $A$, remember that
its {\it error} is denoted by  $R=R_A:\Sigma\to[0,+\infty[$:
$$
R(x):= F(T(x))-F(x)-A(x)+m_A\ge 0.
$$

$\cS(A)$ denotes the set of Holder calibrated sub-actions (it is not empty \cite{CG} \cite{CLT})

Given $A$, the {\it Ma\~n\'e action potential} is:
$$
S_A(x,y):=\lim_{\e\to 0}
\Big[\sup\Big\{\sum_{i=0}^{n-1}\big[A(T^i(z))-m_A\big]\;\Big|\;n\in\na,\;T^n(z)=y,\;d(z,x)<\e\Big\} \Big].
$$

Given $x$ and $y$ the above value describe the $A$-cost of going from $x$ to $y$ following the dynamics.

The {\it Aubry set} is
$\cA(A):=\{\,x\in \Sigma\,|\, S_A(x,x)=0\,\}$.

The terminology is borrowed from the Aubry-Mather Theory \cite{CI}.

For any $x \in\cA(A)$,  we have that $S_A(x,.)$  is a sub-action (in particular, in this case, $S_A(x,y) > -\infty$, for
any $y$), see Proposition 23 in \cite{CLT}.

The set of maximizing measures is
$$
\cM(A) := \{\,\mu\in\cM(T)\;|\; \int A\,d\mu= m_A\,\}.
$$



If $F\in C^\a(\Sigma,\re)$ is a H\"older function define
$$
|F|_\alpha:= \sup_{x\ne y}\frac{\lv F(x)-F(y)\rv}{d(x,y)^\a}.
$$

 Define the {\it Ma\~n\'e set} as
 $$
 \N(A):=\bigcup_{F\in{\cS}(A)} I_F^{-1}\{0\},
 $$
 where the union is among all the $\a$-H\"older calibrated sub-actions $F$ for $A$ and
 $$
 I_F(x) = \sum_{i=0}^\infty R_F(T^i(x)).
 $$

$I_F(x)$ is the deviation function we considered before.

For $A\in\cF$ define the {\it Mather set} as
  $$
  \M(A):=\bigcup_{\mu\in\cM(A)}\text{supp}(\mu).
  $$

 \bigskip

  The {\it Peierls barrier} is
\begin{align*}
h_A(x,y):&= \lim_{\e\to 0} \limsup_{k\to+\infty} S_A(x,y,k,\e),\\
\text{where}\,\, S_A(x,y,k,\e) &:=
\sup\Big\{\sum_{i=0}^{n-1}\big[A(T^i(z))-m_A\big]\;\Big|\; n\ge k,\;T^n(z)=y,\;d(z,x)<\e\Big\}.
\end{align*}

 Several properties of the Ma\~n\'e potential and the Peierls barrier are similar (but not all, see section 4 in \cite{GL3}). We will present proofs for one of them and the other case is similar.

 \begin{lemma}\label{LAS}\quad

\begin{enumerate}
 \item \label{mMA}

 If $\mu$ is a minimizing measure then
$$
\text{supp}(\mu)\subset \A(A)=\{\,x\in\Sigma\;|\;S_A(x,x)=0\;\}.
$$
\item\label{Sm0} $S_A(x,x)\le 0$, for every $x\in\Sigma$.
\item\label{SC0}
For any $z\in\Sigma$, the function $F(y)=h_A(z,y)$ is H\"older continuous.
\item\label{SC1}
If, $a\in\A(A)$, then, $h_A(a,x)=S_A(a,x)$, for all $x\in\Sigma$.

In particular, $F(y)=S_A(a,x)$ is continuous, if $a\in\A(A)$.

\item\label{SC2} If $S_A(w,y)=h_A(w,y)$ then the function
$F(y)= S_A(w,y)$ is continuous at $y$.

\item\label{SC3} If $S(x_0,T^{n_k}(x_0))=\sum_{j=0}^{n_k-1}A(T^j(x_0))$,
$\lim_k T^{n_k}(x_0)=b$ and $\lim_k n_k =+\infty$, then
$$\lim_k S(x_0,T^{n_k}(x_0)) = S(x_0,b)$$.

\end{enumerate}
 \end{lemma}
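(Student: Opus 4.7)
The six assertions share a common backbone. On one hand, every calibrated sub-action $F\in\cS(A)$ dominates the Birkhoff cocycle along any orbit segment, so that
$$
\sum_{i=0}^{n-1}\bigl[A(T^i z)-m_A\bigr]\le F(T^n z)-F(z);
$$
on the other hand, the shift $T$ on $\Sigma$ admits many pre-orbits, so that given any $y$ and any admissible finite prefix one can build $z$ with $T^nz=y$ whose first $n$ symbols are prescribed. I would arrange the proof so that parts (\ref{mMA}) and (\ref{Sm0}) come from the first mechanism alone, while parts (\ref{SC0})--(\ref{SC3}) combine it with H\"older control of $A$ transferred through the second via carefully chosen pre-orbits.

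For (\ref{Sm0}), fix $x$ and any $F\in\cS(A)$; for every $z$ with $T^nz=x$ and $d(z,x)<\e$ the displayed inequality gives $\sum_{i=0}^{n-1}(A(T^iz)-m_A)\le F(x)-F(z)$, which tends to $0$ with $\e$ by continuity of $F$, so $S_A(x,x)\le 0$. For (\ref{mMA}), pick $x$ in the support of an ergodic component of a maximizing $\mu$ satisfying both Poincar\'e recurrence and Atkinson's theorem applied to $A-m_A$: along a subsequence $n_k\to\infty$ one has simultaneously $d(T^{n_k}x,x)\le\la^{M_k}$ with $M_k\to\infty$ and $\sum_{i=0}^{n_k-1}(A(T^ix)-m_A)\to 0$. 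Then construct $z_k$ by prepending the block $x_{-1}\dots x_{-n_k}$ to $x$; by recurrence $d(z_k,x)\to 0$, and a direct coordinate count gives $d(T^iz_k,T^ix)\le\la^{n_k-i+M_k}$, so H\"older continuity of $A$ makes the two Birkhoff sums differ by $O(\la^{M_k\a})\to 0$. This yields $S_A(x,x)\ge 0$, and combined with (\ref{Sm0}) gives $x\in\A(A)$.

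For (\ref{SC0}), fix $z$ and estimate $|h_A(z,y)-h_A(z,y')|$ when $d(y,y')\le\la^N$. Any orbit $w$ realizing the relevant supremum within $\eta$ can be modified only in its last $N$ coordinates to produce $w'$ with $T^nw'=y'$ and $d(w',z)=d(w,z)$ as soon as $k>N$, and the two Birkhoff sums differ by a geometrically convergent tail bounded by $C|A|_\a d(y,y')^\a$, uniformly in $n$ and $\e$. For (\ref{SC1}), the Aubry condition $S_A(a,a)=0$ supplies arbitrarily long loops $z_j\to a$ with $T^{n_j}z_j=a$, $n_j\to\infty$, and $\sum(A-m_A)(T^iz_j)\to 0$; splicing such a loop in front of an orbit that realizes $S_A(a,x)$ within $\eta$ (matching coordinates at the junction and absorbing the mismatch by H\"older continuity as in (\ref{mMA})) gives $h_A(a,x)\ge S_A(a,x)-\eta-o(1)$, while the reverse inequality is built into the definitions. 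Parts (\ref{SC2}) and (\ref{SC3}) then follow: the former is immediate from (\ref{SC0}) together with the hypothesis, and the latter reduces to verifying that the given orbits $T^{n_k}x_0\to b$ are admissible competitors for $h_A(x_0,b)$ after the prefix-modification trick, combined with the automatic upper semicontinuity $S_A(x_0,b)\ge\limsup_k S_A(x_0,T^{n_k}x_0)$.

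The main obstacle throughout is the splicing step: one must concatenate orbit segments on the symbolic space so that they match at the junction while preserving the Birkhoff cost up to an error that vanishes as the loops become long. H\"older regularity of $A$ is precisely what makes the telescoping coordinate errors summable, and Atkinson's theorem together with the Aubry hypothesis are what guarantee that the closing loops can be chosen with Birkhoff cost tending to zero rather than merely being bounded.
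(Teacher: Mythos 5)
Your plan for items (\ref{Sm0}), (\ref{SC0}), (\ref{SC1}) and (\ref{SC2}) is sound and for (\ref{Sm0}) it is essentially the paper's telescoping argument; the paper itself only proves (\ref{Sm0}) and (\ref{SC3}) in full and defers (\ref{SC0})--(\ref{SC2}) to cited references, and your loop-splicing outline for (\ref{SC1}) is the standard one. There are, however, two genuine gaps.

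For item (\ref{mMA}), picking a single point $x$ that satisfies Poincar\'e recurrence and the Atkinson conclusion only gives $x\in\A(A)$ for $\mu$-a.e.\ $x$, which is strictly weaker than the inclusion $\text{supp}(\mu)\subset\A(A)$. To close this you either need to know in advance that $\A(A)$ is closed (which in this paper is a consequence of Proposition~\ref{AS}, which in turn uses Lemma~\ref{LAS}, so it cannot be invoked here) or, as in the proof of Lemma~\ref{PR}.\eqref{PR0}, you must take an arbitrary $a\in\text{supp}(\mu)$, use that every neighbourhood of $a$ has positive measure to produce generic points $\a_k\to a$ satisfying the Atkinson estimates, and then transport their recurrent loops to competitors for $S_A(a,a)$ via inverse branches and H\"older control. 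Your sketch never makes this passage from a generic point to an arbitrary point of the support.

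For item (\ref{SC3}), you assert two things -- that the modified orbit segments $x_0\to T^{n_k}(x_0)\to b$ are competitors for $h_A(x_0,b)$, and an ``automatic upper semicontinuity'' $S_A(x_0,b)\ge\limsup_k S_A(x_0,T^{n_k}x_0)$. Both of these yield the same one-sided bound, namely $\limsup_k S_A(x_0,T^{n_k}x_0)\le S_A(x_0,b)$. The opposite inequality $\liminf_k S_A(x_0,T^{n_k}x_0)\ge S_A(x_0,b)$ is never addressed, and it is not an instance of semicontinuity: the whole point of item (\ref{SC2}) and of the remark preceding the proof is that $y\mapsto S_A(x_0,y)$ can fail to be lower semicontinuous when the supremum is attained by finite orbit segments. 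The paper supplies this missing half by taking near-optimal competitors $b_\ell\to x_0$ with $T^{m_\ell}(b_\ell)=b$ for $S_A(x_0,b)$ and pushing them through the inverse branches to obtain competitors $x_\ell$ with $T^{m_\ell}(x_\ell)=T^{n_k}(x_0)$, yielding $S_A(x_0,T^{n_k}x_0)\ge S_A(x_0,b)-Q\,d(T^{n_k}(x_0),b)^\a$ for each fixed $k$. Without an argument of this type your proof of (\ref{SC3}) establishes only one of the two required inequalities.
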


Item \eqref{mMA} follows from Atkinson-Ma\~n\'e's lemma which says that
if $\mu$ is ergodic for $\mu$-almost every $x$ and every $\e>0$, the set
$$
N(x,\e):=\Big\{\,n\in\na\;\Big |\; \lv\sum_{j=0}^{n-1}A(T^j(x))- n\int A\,d\mu\rv\le \e\;\Big\}
$$
is infinite (see Lemma 2.2 \cite{Man} (which consider non-invertible transformation, \cite{CI}, \cite{CLT} or \cite{GL2} for the proof). We will show bellow just the items which are not proved in the mentioned references.

The problem with the discontinuity of $F(y)=S_A(w,y)$ is when the maximum
is obtained at a {\it finite} orbit segment
(i.e. when $S_A(w,y)\ne h_A(w,y)$), the hypothesis  in item \ref{SC2}.

 \begin{proof}\quad

By adding a constant we can assume that $m_A=0$.

 {\bf (\ref{Sm0}).} Let $F$ be a continuous sub-action for $A$. Then
$$
-R_F= A+F -F\circ T \le 0.
$$
Given $x_0\in \Sigma$, let $x_k\in \Sigma$ and $n_k\in\na$ be such that
$T^{n_k}(x_k)=x_0$, $\lim_k x_k=x_0$ and
$$
S(x_0,x_0) = \lim_k \sum_{j=0}^{n_k-1}A(T^j(x_k)).
$$
 We have
\begin{align*}
 \sum_{j=0}^{n_k-1}A(T^j(x_k))
    &= \Big[\sum_{j=0}^{n_k-1}\big(A+F-F\circ T\big)(T^j(x_k))\Big] +F(x_0)-F(x_k)
\\
&\le F(x_0)-F(x_k).
\end{align*}

Then
\begin{align*}
 S(x_0,x_0)= \lim_k \sum_{j=0}^{n_k-1}A(T^j(x_k))\le \lim_k\big[ F(x_0)-F(x_k)\big] =0.
\end{align*}

 \bigskip

The proofs of (3) (4) (5) can be found in \cite{CLT} \cite{GL2}
\bigskip

  {\bf (\ref{SC3}).}  Let $\tau_k$ be the branch of the inverse of $T^{n_k}$ such that
                      $\tau_k(T^{n_k}(x_0))=x_0$. Let $b_k=\tau_k(b)$ for $k$ sufficiently large.
  Then, by the expanding property of the shift, there is $\lambda<1$, such that,
\begin{gather*}
d(x_0,b_k)\le \la^{n_k}\, d(T^{n_k}(x_0),b) \overset{k}\longrightarrow 0,
\\
\lv\sum_{i=0}^{n_k-1} A(T^i(x_0)) -\sum_{i=0}^{n_k-1} A(T^i(b_k))\rv
   \le \frac{\lV A\rV_\a}{1-\la^\a}\, d(T^{n_k}(x_0),b)^\a.
\end{gather*}
Write $Q:=\frac{|A|_\alpha}{1-\la^\a}$, then
\begin{align*}
 S(x_0,b)&\ge\limsup_k\sum_{i=0}^{n_k-1} A(T^i(b_k)) \\
         &\ge \limsup_k S(x_0,T^{n_k}(x_0)) - Q\, d(T^{n_k}(x_0),b)^\a
\\
         &\ge \limsup_k S(x_0,T^{n_k}(x_0)).
\end{align*}

Now for $\ell\in\na$ let $b_\ell\in \Sigma$  and  $m_\ell\in\na$ be such that
$\lim b_\ell = x_0$, $T^{m_\ell}(b_\ell)=b$ and
$$
 \lim_\ell\sum_{j=0}^{m_\ell-1} A(T^j(b_\ell)) = S(x_0,b).
$$
Let $\hat{\tau}_l$ be the branch of the inverse of $T^{m_\ell}$ such that $\hat{\tau}_l(b)=b_\ell$.
Let $x_\ell:=\hat{\tau}_l(T^{n_k}(x_0))$.
Then
\begin{align*}
d(x_\ell,x_0) &\le d(x_0,b_\ell)+d(b_\ell,x_\ell)
\\
&\le d(x_0,b_\ell)+\la^\ell\, d(T^{n_k}(x_0),b)
\overset{\ell}\longrightarrow 0.
\end{align*}
$$
\lv\sum_{j=0}^{m_\ell-1}A(T^j(x_\ell))-\sum_{j=0}^{m_\ell-1}A(T^j(b_\ell))\rv
\le Q\, d(T^{n_k}(x_0),b)^\a.
$$
Since $x_\ell\to x_0$ and $T^{m_\ell}(x_\ell)=T^{n_k}(x_0)$, we have that
\begin{align*}
 S(x_0,T^{n_k}(x_0)) &\ge \limsup_\ell \sum_{j=0}^{m_\ell -1}A(T^j(x_\ell))
\\
&\ge \limsup_\ell \sum_{j=0}^{m_\ell -1}A(T^j(b_\ell)) - Q\, d(T^{n_k}(x_0),b)^\a
\\
&\ge S(x_0,b) - Q\, d(T^{n_k}(x_0),b)^\a.
\end{align*}
And hence
$$
\liminf_k S(x_0,T^{n_k}(x_0)) \ge S(x_0,b).
$$

 \end{proof}

 \begin{pro}\label{AS}
 The Aubry set is
  $$
  \cA(A)=\bigcap_{F\in\cS(A)} I_F^{-1}\{0\},
  $$
 where the intersection is among all the $\a$-H\"older calibrated sub-actions for $A$.
 \end{pro}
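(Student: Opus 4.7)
I would prove Proposition~\ref{AS} by two inclusions; by adding a constant to $A$ one may assume $m_A=0$. The workhorse throughout is the cohomological identity
\[
\sum_{j=0}^{n-1} R_F(T^j(z)) \;=\; F(T^n(z)) - F(z) - \sum_{j=0}^{n-1} A(T^j(z)),
\]
valid for any sub-action $F$, combined with the nonnegativity $R_F\ge 0$.

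For $\cA(A)\subseteq \bigcap_F I_F^{-1}\{0\}$, fix $F\in\cS(A)$ and $x\in\cA(A)$. I would pick approximating orbits $z_k\to x$, $T^{n_k}(z_k)=x$, with $\sum_{j=0}^{n_k-1}A(T^j(z_k))\to 0$. The identity above plus continuity of $F$ forces $\sum_{j=0}^{n_k-1}R_F(T^j(z_k))\to 0$; since $R_F\ge 0$, each partial sum $\sum_{j=0}^{N-1}R_F(T^j(z_k))$ is dominated by the (vanishing) full sum and therefore also tends to $0$. Letting $k\to\infty$ with $N$ fixed, continuity of $R_F$ together with $T^j(z_k)\to T^j(x)$ yields $\sum_{j=0}^{N-1}R_F(T^j(x))=0$, and sending $N\to\infty$ produces $I_F(x)=0$.

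For the reverse inclusion, assume $I_F(x)=0$ for every calibrated $F$; I would proceed in three steps. Step~1 shows every $y\in\omega(x)$ lies in $\cA(A)$: pick any $F\in\cS(A)$ and $T^{n_k}(x)\to y$; then $\sum_{j=n_k}^{n_{k'}-1}A(T^j(x))=F(T^{n_{k'}}(x))-F(T^{n_k}(x))\to 0$. In the shift I splice
\[
\tilde z_{k,k'} \;:=\; \bigl(x_{n_k},\ldots,x_{n_{k'}-1},\,y_0,y_1,\ldots\bigr),
\]
which satisfies $T^{n_{k'}-n_k}(\tilde z_{k,k'})=y$ exactly and $\tilde z_{k,k'}\to y$. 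A H\"older comparison of $T^i(\tilde z_{k,k'})$ with $T^{n_k+i}(x)$, harvesting the extra digits of agreement from $T^{n_{k'}}(x)\to y$, shows the two cost sums differ by $o(1)$; hence $S_A(y,y)\ge 0$, whence $y\in\cA(A)$ by Lemma~\ref{LAS}(\ref{Sm0}). Step~2: pick $y\in\omega(x)\subseteq\cA(A)$ (nonempty by compactness); by Lemma~\ref{LAS}(\ref{SC1}) and the standard theory of calibrated sub-actions~\cite{CLT}, $F_y:=S_A(y,\cdot)=h_A(y,\cdot)$ is a H\"older calibrated sub-action, so the hypothesis $I_{F_y}(x)=0$ yields
\[
\sum_{i=0}^{n_k-1}A(T^i(x))\;=\;F_y(T^{n_k}(x))-F_y(x)\;\xrightarrow[k\to\infty]{}\;-S_A(y,x).
\]
Step~3: choose $\zeta_\ell\to y$, $T^{m_\ell}(\zeta_\ell)=x$, with $\sum_{j=0}^{m_\ell-1}A(T^j(\zeta_\ell))\to S_A(y,x)$, and splice
\[
w_{k,\ell} \;:=\; \bigl(x_0,\ldots,x_{n_k-1},\,(\zeta_\ell)_0,(\zeta_\ell)_1,\ldots\bigr),
\]
so $T^{n_k+m_\ell}(w_{k,\ell})=x$ and $d(w_{k,\ell},x)\le\la^{n_k}\to 0$. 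The block $i\ge n_k$ contributes exactly $\sum_j A(T^j(\zeta_\ell))$, while for $0\le i<n_k$ a H\"older estimate using that $T^{n_k}(x)$ and $\zeta_\ell$ are both close to $y$ (hence to each other) shows the contribution differs from $\sum A(T^i(x))$ by $o(1)$. The two principal terms total $-S_A(y,x)+S_A(y,x)=0$, so $S_A(x,x)\ge 0$, which combined with Lemma~\ref{LAS}(\ref{Sm0}) gives $x\in\cA(A)$.

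The only real obstacle, shared by Steps~1 and~3, is the shift-concatenation of orbit segments: the definition of $S_A$ requires the approximating orbit to terminate \emph{exactly} at the target, whereas the pieces I want to glue only pass near one another around $y$. The splicing construction routes around this by writing the desired tail verbatim at the end of the spliced point; the H\"older decay of $A$ along the expanding shift then makes the residual error summable and, crucially, uniformly small once one exploits the additional digits of agreement inherited from $T^{n_{k'}}(x)\to y$ and $\zeta_\ell\to y$. Calibration of $F_y$ in Step~2 is the other standard input, which I would simply quote from~\cite{CLT}.
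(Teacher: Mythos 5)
Your proposal is correct, and the two inclusions are established by sound arguments; the forward inclusion is essentially the paper's argument in a cleaner form, passing from the vanishing of the total cost along approximating orbits to the vanishing of all partial sums of $R_F$ along the orbit of $x$. The reverse inclusion, however, takes a genuinely different route. The paper chooses a single subsequence $n_k$ along which \emph{both} $b=\lim_k T^{n_k}(x_0)$ and the empirical measure $\mu=\lim_k\mu_k$ converge with $b\in\operatorname{supp}(\mu)$; it then shows $\mu$ is maximizing from the equality of $S_A$ with the orbital Birkhoff sums (using the hypothesis for an arbitrary calibrated $G$), invokes the Atkinson--Ma\~n\'e lemma via Lemma~\ref{LAS}(\ref{mMA}) to conclude $b\in\cA(A)$, and finally appeals to the $S_A$-triangle inequality together with the continuity results Lemma~\ref{LAS}(\ref{SC1}) and (\ref{SC3}) to close the loop and obtain $S_A(x_0,x_0)=0$. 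You instead take any $y\in\omega(x)$ and prove directly that $S_A(y,y)\ge 0$ by explicitly splicing symbolic orbit segments of $x$ near $y$ onto the tail of $y$; this avoids the measure-theoretic detour and the Atkinson--Ma\~n\'e input altogether, relying only on Lemma~\ref{LAS}(\ref{Sm0}). In Step~3 you again replace the paper's appeal to the triangle inequality for $S_A$ and Lemma~\ref{LAS}(\ref{SC3}) with an explicit concatenation $w_{k,\ell}$ that realizes the two principal cost blocks, with the H\"older/expanding estimate controlling the residual. The net effect is a more self-contained, hands-on proof in the symbolic setting at the price of redoing by hand some estimates that the paper packages in Lemma~\ref{LAS}; both arguments are correct, and yours makes the mechanism more visible. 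One small point worth noting if you write this up: in the splicing of Step~1 you need $n_{k'}-n_k\to\infty$ (not merely $n_k,n_{k'}\to\infty$) so that $\tilde z_{k,k'}\to y$; this is easily arranged by passing to a further subsequence.
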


\begin{proof}\quad

By adding a constant we can assume that $m_A=0$.

We first prove that $\cA(A)\subset\bigcap_{F\in\cS(A)} I_F^{-1}\{0\}$.

Let $F\in\cS(A)$ be a H\"older sub-action and $x_0\in\A(A)$.
Since $S_A(x_0,x_0)=0$ then there is $x_k\to x_0$ and $n_k\uparrow \infty$ such that
$\lim_k T^{n_k}(x_k)=x_0$ and $\lim_k\sum_{j=0}^{n_k-1}A(T^j(x_k))=0$.
If $m\in\na$ we have that
\begin{align}
 F(T^{m+1}(x_0))
&\ge F(T^m(x_0))+A(T^m(x_0))
\notag\\
&\ge F(T^{m+1}(x_k)) +\sum_{j=m+1}^{n_k+m-1}A(T^j(x_k))+A(T^m(x_0))
\notag\\
&\ge F(T^{m+1}(x_k)) +\sum_{j=0}^{n_k-1}A(T^j(x_k))
- \sum_{j=0}^{m}|A(T^j(x_k))-A(T^j(x_0))|
\label{rhs1}
\end{align}
When $k\to\infty$ the right hand side of \eqref{rhs1} converges to
$F(T^{m+1}(x_0))$, and hence all those inequalities are equalities.
Therefore $R_F(T^m(x_0))=0$ for all $m$ and hence $I_F(x_0)=0$.

Now let $x_0\in\bigcap_{F\in\cS(A)} I_F^{-1}\{0\}$.
Sinc $\Sigma$ is compact there is $n_k\overset{k}\to +\infty$ such that
the limits $b=\lim_k T^{n_k}(x_0)\in\Sigma$ and $\mu=\lim_k\mu_k\in\cM(T)$,
$\mu_k:=\frac1{n_k}\sum_{i=0}^{n_k-1}\delta_{T^i(x_0)}$ exist and $b\in\text{supp}(\mu)$.
Let $G$ be a H\"older calibrated sub-action. For $m\ge n$ we have
\begin{align*}
G(T^n (x_0))+S_A(T^n(x_0),T^m (x_0))
&\ge G(T^n(x_0))+\sum_{j=n}^{m-1}A(T^j(x_0))
\\
&= G(T^m(x_0)) \qquad\qquad\text{[because $I_G(x_0)=0$\,]}
\\
&\ge G(T^n (x_0))+S_A(T^n(x_0),T^m (x_0)).
\end{align*}
Then they are all equalities and hence for any $m\ge n$
$$
S_A(T^n(x_0),T^m (x_0))=\sum_{j=n}^{m-1}A(T^j(x_0)).
$$
Since
\begin{align*}
 0=\lim_k\frac 1{n_k} S_A(T^n(x_0),T^m (x_0))=
\lim_k\frac 1{n_k}\sum_{j=n}^{m-1}A(T^j(x_0))
=\int A\,d\mu,
\end{align*}
$\mu$ is a minimizing measure. By lemma~\ref{LAS}.\eqref{mMA},
$b\in\A(A)$.

Let $F:\Sigma\to\re$ be $F(x):=S_A(b,x)$. Then $F$  is a H\"older calibrated sub-action.
By hypothesis $I_F(x_0)=0$ and then
\begin{align*}
F(T^n(x_0))&= F(x_0)+ S_A(x_0,T^{n_k}(x_0)).\\
S_A(b,T^{n_k}(x_0)) &= S_A(b,x_0)+ S_A(x_0,T^{n_k}(x_0)).
\end{align*}
By lemma~\ref{LAS}.\eqref{SC1} and Lemma~\ref{LAS}.\eqref{SC3}, taking the limit on $k$ we have that
\begin{gather*}
0=S_A(b,b)=S_A(b,x_0)+S_A(x_0,b)=0.
\\
0\ge S_A(x_0,x_0)\ge S_A(x_0,b)+S_A(b,x_0)=0
\end{gather*}
Therefore $x_0\in\A(A)$.

\end{proof}

We want to show the following result which will require several preliminary results.

    \begin{thm}\label{TR}
     The set
     \begin{equation}\label{eTR}
     \cR:=\{\,A\in C^\a(\Sigma,\re)\;|\; \cM(A)=\{\mu\},\; \A(A)=\text{supp}(\mu)\,\}
     \end{equation}
     contains a residual set in $C^\a(\Sigma,\re)$.
    \end{thm}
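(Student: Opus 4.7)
The plan is to prove Theorem~\ref{TR} by writing $\cR = \cR_1 \cap \cR_2$, where $\cR_1 := \{A \in C^\a(\Sigma,\re) : \cM(A) \text{ is a singleton}\}$ and $\cR_2 := \{A : \A(A) = \M(A)\}$, and showing each contains a residual $G_\delta$; under $\cR_1$, $\M(A) = \text{supp}(\mu_A)$, so $\cR_1 \cap \cR_2 \subset \cR$.

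For $\cR_1$: the map $A \mapsto m(A)$ is continuous and convex, and $\cM(A)$ represents its weak-$*$ subdifferential at $A$, so $\cR_1$ is the Gateaux differentiability locus of $m$. Choosing a countable dense family $\{\phi_k\} \subset C^0(\Sigma)$, the sets
$$
\cU_{k,j}:=\Bigl\{A \,\Big|\, \sup_{\mu,\nu\in\cM(A)}\Bigl|\int \phi_k\,d(\mu-\nu)\Bigr|<\tfrac{1}{j}\Bigr\}
$$
are open (by upper semi-continuity of $\cM$) and dense (for small $\epsilon>0$, $A+\epsilon\phi_k$ has $\cM$ concentrated on the face of $\cM(A)$ where $\int\phi_k$ is maximal, shrinking the $\int\phi_k$-diameter), so $\cR_1 = \bigcap_{k,j}\cU_{k,j}$ is a dense $G_\delta$.

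For $\cR_2$: fix a countable basis $\{B_n\}$ of open balls of $\Sigma$, and set
$$
\cO_n := \{A : \overline{B_n}\cap\A(A)=\emptyset\} \cup \{A : B_n\cap\M(A)\ne\emptyset\}.
$$
Then $\bigcap_n\cO_n\subset\cR_2$: if $x\in\A(A)\setminus\M(A)$, pick $B_n\ni x$ with $\overline{B_n}\cap\M(A)=\emptyset$ to reach a contradiction. The first summand is open by upper semi-continuity of $A\mapsto\A(A)$: given $A_k\to A$ and $x_k\in\A(A_k)\cap\overline{B_n}$ with $x_k\to x$, by Arzel\`a--Ascoli a subsequence of normalized calibrated sub-actions $F_k\in\cS(A_k)$ converges to some $F\in\cS(A)$, continuity of $R_F$ jointly in $(F,\cdot)$ yields $I_F(x)=0$, and Proposition~\ref{AS} combined with invariance of the intersection characterization forces $x\in\A(A)$. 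Density uses hypothesis~(\ref{noanal}): given $A$ with $B_n\cap\M(A)=\emptyset$, choose a compact $K\supset\M(A)$ with $\overline{K}\cap\overline{B_n}=\emptyset$, and $\psi\in\cF$ with $\psi\le 0$ and $[\psi=0]=K$. For small $\delta>0$, set $A':=A+\delta\psi$. Then $m(A')=m(A)$ and $\cM(A')=\cM(A)$: any $\mu\in\cM(A)$ has $\int\psi\,d\mu=0$, and conversely $\nu\in\cM(A')$ forces $\int\psi\,d\nu=0$ (hence $\text{supp}(\nu)\subset K$) and $\int A\,d\nu=m(A)$. Crucially, $\A(A')\subset K$: for $x\notin K$, $\psi(x)<0$, and for any $\varepsilon>0$, $n\ge 1$, and $z$ with $T^n z=x$, $d(z,x)<\varepsilon$, using any $F\in\cS(A)$,
$$
\sum_{j=0}^{n-1}\bigl[A'(T^j z)-m(A')\bigr]=\sum_{j=0}^{n-1}\bigl[A(T^j z)-m(A)\bigr]+\delta\sum_{j=0}^{n-1}\psi(T^j z)\le F(x)-F(z)+\delta\psi(z),
$$
where the second inequality keeps only the $j=0$ term of the $\psi$-sum (the others are $\le 0$) and telescopes $A-m(A)\le F\circ T-F$. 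Taking $z\to x$ and $\varepsilon\to 0$, $h_{A'}(x,x)\le\delta\psi(x)<0$, so $x\notin\A(A')$. Hence $\A(A')\cap\overline{B_n}=\emptyset$ and $A'\in\cO_n$.

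The main obstacle is the Aubry-set shrinking estimate: despite $\psi\le 0$ everywhere, the strict negativity $\delta\psi(x)<0$ at the basepoint must survive the passage through arbitrarily long orbit segments $T^j z$, which is achieved by discarding all other $\psi$-terms (possible because $\psi\le 0$) and bounding the accumulated $A$-action by the coboundary of a fixed calibrated sub-action of the unperturbed $A$. The remaining ingredients---genericity of uniqueness, and upper semi-continuity of $\A$ and $\M$---follow from classical convex analysis on the continuous convex functional $m$ and from Arzel\`a--Ascoli compactness of the family of H\"older-normalized calibrated sub-actions. Combining, $\cR_1 \cap \bigcap_n \cO_n$ is residual and contained in $\cR$.
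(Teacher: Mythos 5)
Your density/perturbation step is essentially the paper's own: both pick $\psi\le 0$ with $[\psi=0]$ the set to be preserved (using hypothesis~\eqref{noanal}), both verify $\cM(A+\delta\psi)=\cM(A)$, and both use the telescoping estimate $\sum_{j<n}[A'(T^jz)-m_{A'}]\le F(x)-F(z)+\delta\psi(z)$ to kill the Aubry set off $K$. Your treatment of $\cR_1$ via generic differentiability of the convex functional $m$ is also a legitimate (if telegraphic) route to a fact the paper simply cites from \cite{CLT}.

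The genuine gap is your openness claim for $\cO_n$, and it is not cosmetic. You claim $\{A:\overline{B_n}\cap\A(A)=\emptyset\}$ is open and argue: take $x_k\in\A(A_k)\cap\overline{B_n}$, $x_k\to x$, extract $F_k\in\cS(A_k)$ with $F_k\to F\in\cS(A)$, get $I_F(x)=0$, conclude $x\in\A(A)$. But $I_F(x)=0$ for \emph{one} calibrated sub-action $F$ only places $x$ in the Ma\~n\'e set $\N(A)=\bigcup_{G}[I_G=0]$; by Proposition~\ref{AS} the Aubry set is the \emph{intersection} $\A(A)=\bigcap_G[I_G=0]$, and you have no control over the other $G\in\cS(A)$ (there is no reason a given $G$ should be the limit of some sequence $G_k\in\cS(A_k)$). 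So what you prove is upper semi-continuity of $\N$, not of $\A$; the Aubry set is not upper semi-continuous in general precisely because it is an intersection. Moreover, you never address the second summand $\{A:B_n\cap\M(A)\ne\emptyset\}$, which would need lower semi-continuity of $A\mapsto\M(A)$ --- also false in general, since the supports of maximizing measures can jump under perturbation. So $\cO_n$ is not shown to be open, and the residuality of $\cR_2$ is not established.

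The paper sidesteps exactly this obstruction. Rather than requiring $\cO_n$-type sets to be open everywhere, it introduces $\cU(\e)=\{A:d_H(\A(A),\M(A))<\e\}$ and only proves that $\cU(\e)$ contains an \emph{open neighborhood of the already-dense set $\cR$}. The point is that at $A\in\cR$ the calibrated sub-action is unique up to a constant (Lemma~\ref{PR}.\eqref{PR1}), hence $\A(A)=\N(A)$, and then upper semi-continuity of $\N$ (Lemma~\ref{LPL}.\eqref{lpl2}) does yield continuity of $\A$ at $A$ (Lemma~\ref{LPL}.\eqref{lpl3}), together with Lemma~\ref{LPL}.\eqref{lpl9} for $\M$. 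This is a local argument anchored at $\cR$; global semicontinuity of $\A$ and $\M$, which your approach needs and which fails, is never invoked. To repair your proof you would essentially have to import this mechanism --- restrict attention to a dense set of potentials with a unique calibrated sub-action, use $\A=\N$ there, and replace "$\cO_n$ is open'' by "$\cO_n$ contains an open neighborhood of that dense set.''
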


 The proof of the bellow lemma (Atkinson-Ma\~n\'e) can be found in \cite{Man} and \cite{CI}.

 \begin{lemma} \label{FancyBirk}
 Let $(X, \fB, \nu)$ be a probability space, $f$ an ergodic measure
 preserving map and $F:X\to \re$ an integrable function. Given $A\in
 \fB$
 with $\nu(A)>0$ denote by $\hat{A}$ the set of points $p\in A$ such
 that for all
 $\e>0$ there exists an integer $N>0$ such that $f^N(p)\in A$ and
 $$
 \Big| \sum_{j=0}^{N-1} F\bigl( f^j(p) \bigr) - N\,
  \int F \, d\nu\, \Big|
 < \e \, .
 $$
 Then $\nu(\hat{A})=\nu(A)$.
 \end{lemma}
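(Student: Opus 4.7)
The plan is to deduce the lemma from Atkinson's classical skew-product argument. Replacing $F$ by $F-\int F\,d\nu$, I may assume $\int F\,d\nu=0$, and must then show that for $\nu$-a.e.\ $p\in A$ and every $\e>0$ there is some $N\in\na$ with $f^N(p)\in A$ and $|S_NF(p)|<\e$, writing $S_NF(p):=\sum_{j=0}^{N-1}F(f^j(p))$.

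First I would lift the dynamics to $X\times\re$ via $\tilde f(x,s):=(f(x),s+F(x))$, which preserves the $\sigma$-finite measure $\tilde\nu:=\nu\times m$, where $m$ is Lebesgue measure on $\re$, and whose iterates satisfy $\tilde f^n(x,s)=(f^n(x),s+S_nF(x))$. The heart of the proof is the assertion that $\tilde f$ is \emph{conservative}: every measurable wandering set has $\tilde\nu$-measure zero. This is Atkinson's theorem, and it uses both the ergodicity of $f$ and the normalization $\int F\,d\nu=0$. The standard argument shows that any wandering set $W\subset X\times[-M,M]$ of positive finite $\tilde\nu$-measure would contradict the estimate $S_nF(x)/n\to 0$ that comes from Birkhoff's theorem, via a counting argument on how often the orbit $(f^n(x),S_nF(x))$ visits $X\times[-M,M]$; equivalently one invokes Hopf's decomposition theorem for $\tilde f$.

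Granted conservativity, fix $\e>0$ and consider $C_\e:=A\times(-\e/2,\e/2)$, whose $\tilde\nu$-measure equals $\e\,\nu(A)\in(0,\infty)$. Poincar\'e recurrence for conservative transformations on a $\sigma$-finite space then produces a $\tilde\nu$-null set $Z_\e\subset C_\e$ such that every $(p,s)\in C_\e\setminus Z_\e$ admits some $N\in\na$ with $\tilde f^N(p,s)\in C_\e$, i.e.\ $f^N(p)\in A$ together with $|S_NF(p)|<\e$. By Fubini, the set of $p\in A$ for which $(p,s)\in Z_\e$ holds on a positive-Lebesgue-measure family of $s\in(-\e/2,\e/2)$ is $\nu$-null; let $A_\e\subset A$ denote its complement in $A$, so $\nu(A_\e)=\nu(A)$ and every $p\in A_\e$ satisfies the conclusion at this particular $\e$. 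Finally set $\hat A:=\bigcap_{k\in\na} A_{1/k}$; countable subadditivity gives $\nu(\hat A)=\nu(A)$, and each $p\in\hat A$ fulfills the lemma (given $\e>0$, pick $k$ with $1/k<\e$ and use the $N$ that works for $1/k$).

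The main obstacle is the conservativity step (Atkinson's theorem); once it is in hand, the remaining ingredients — the skew-product setup, Poincar\'e recurrence on a set of positive finite $\tilde\nu$-measure, the Fubini reduction, and the countable intersection over $\e=1/k$ — are essentially routine bookkeeping.
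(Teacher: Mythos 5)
The paper does not actually prove this lemma: it is quoted as the Atkinson--Ma\~n\'e lemma with a pointer to Ma\~n\'e's paper and to Contreras--Iturriaga, where the proof is the direct one: if the conclusion failed, there would be $\varepsilon>0$ and a positive-measure set $B\subset A$ such that whenever $p\in B$ and $f^N(p)\in B$ one has $|S_NF(p)|\ge\varepsilon$; by the cocycle identity the values of $S_NF(p)$ at successive return times to $B$ are then pairwise $\varepsilon$-separated, so after $n$ returns some partial sum exceeds $(n-1)\varepsilon/2$, and since Birkhoff gives returns to $B$ with positive frequency this forces $\limsup_N |S_NF(p)|/N>0$, contradicting $S_NF/N\to\int F\,d\nu=0$ a.e. Your reduction of the lemma to conservativity of the skew product $\tilde f(x,s)=(f(x),s+F(x))$ is correctly executed: the recurrence of $C_\varepsilon=A\times(-\varepsilon/2,\varepsilon/2)$, the triangle inequality giving $|S_NF(p)|<\varepsilon$, the Fubini removal of the null set, and the intersection over $\varepsilon=1/k$ are all fine.

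The gap is that the step you label as "the heart" is left unproved, and it is not an auxiliary fact: conservativity of this skew product \emph{is} Atkinson's theorem, which is precisely (a strengthening of) the lemma you were asked to prove, so citing it, or saying "the standard argument shows", makes the argument essentially circular as a self-contained proof. Moreover, the one-line sketch you give of that argument (a wandering set $W\subset X\times[-M,M]$ of positive finite measure contradicts $S_nF/n\to 0$ by counting visits) tacitly uses that the forward images $\tilde f^n(W)$ are pairwise disjoint and of constant measure, which requires $\tilde f$, hence $f$, to be invertible; in this paper $f$ is the one-sided shift, so you would need either to pass to the natural extension (lifting $F$ by the projection, proving recurrence upstairs and projecting back, which works but must be said) or to run a preimage-based wandering-set argument, or Hopf's decomposition in the non-invertible setting, none of which you address. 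To make the proposal complete you should either carry out the counting/Egorov argument for conservativity in the invertible case and add the natural-extension reduction, or simply give the Ma\~n\'e-style direct argument above, which avoids the skew product entirely and handles non-invertible $f$ with no extra work.
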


 \begin{corollary}\label{C.FancyBirk}
 If besides the hypothesis of lemma~\ref{FancyBirk}, $X$ is
 a complete separable metric space,
 and $\fB$ is its Borel $\sigma$-algebra, then for a.e.
 $x\in X$
 the following property holds: for all $\e>0$ there exists $N>0$ such
 that
 $d( f^N(x), x)<\e$ and
 $$
 \lv {\sum_{j=0}^{N-1}} F\bigl( f^j(x)\bigr) - N \int F\, d\nu\rv<\e
 $$
 \end{corollary}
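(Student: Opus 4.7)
The plan is to reduce the corollary to countably many applications of Lemma~\ref{FancyBirk} by exploiting a countable basis for the topology of $X$. Since $X$ is separable and metric, fix a countable dense set $\{x_n\}_{n\in\na}\subset X$ and let $\cB=\{B(x_n,1/k):n,k\in\na\}$, a countable basis of open balls. For each $B\in\cB$ with $\nu(B)>0$, Lemma~\ref{FancyBirk} produces $\widehat{B}\subset B$ with $\nu(B\setminus\widehat{B})=0$, consisting of points that return to $B$ with arbitrarily small Birkhoff-sum deviation.

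Next I would set
$$
E := \bigcup_{B\in\cB,\;\nu(B)>0}(B\setminus\widehat{B}),
$$
which is a countable union of $\nu$-null sets, so $\nu(E)=0$. I also need to discard points outside $\text{supp}(\nu)$: by separability, the complement of $\text{supp}(\nu)$ is a countable union of open balls of $\nu$-measure zero, and therefore itself of $\nu$-measure zero. Set $G:=\text{supp}(\nu)\setminus E$; then $\nu(G)=1$, and the claim is that the desired property holds at every $x\in G$.

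Given $x\in G$ and $\e>0$, pick $k\in\na$ with $2/k<\e$ and choose $x_n$ with $d(x,x_n)<1/k$, so $x\in B(x_n,1/k)=:B\in\cB$. Since $x\in\text{supp}(\nu)$ and $B$ is an open neighborhood of $x$, $\nu(B)>0$, so $\widehat{B}$ is defined; because $x\notin E$, $x\in\widehat{B}$. Applying the defining property of $\widehat{B}$ with tolerance $\e$ yields $N>0$ with $f^N(x)\in B$ and $\bigl|\sum_{j=0}^{N-1}F(f^j(x))-N\int F\,d\nu\bigr|<\e$. Since both $x$ and $f^N(x)$ lie in the radius-$1/k$ ball $B$, we conclude $d(f^N(x),x)<2/k<\e$, as required.

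The only delicate point is guaranteeing that some basis ball around $x$ of small diameter has positive measure, which is precisely why the restriction to $\text{supp}(\nu)$ is needed; the remainder is a routine Lindel\"of-type reduction, turning the ``for all $\e$'' statement into countably many positive-measure conditions whose intersection still has full measure.
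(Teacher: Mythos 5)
Your proof is correct and follows essentially the same Lindel\"of-type reduction as the paper: apply Lemma~\ref{FancyBirk} to each member of a countable basis of small-diameter sets and intersect the resulting full-measure sets. The only cosmetic difference is that you restrict to $\text{supp}(\nu)$ to guarantee positive measure of the small ball containing $x$, whereas the paper instead takes the union $\bigcup_n \hat V_n(\tfrac1m)$, which is automatically of full measure and sidesteps that issue; both devices achieve the same end.
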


 \begin{proof}
  Given $\e>0$ let $\{ V_n(\e)\}$ be a countable basis of
 neighborhoods with
 diameter $<\e$ and let $\hat{V}_n$ be associated to $V_n$ as in
 lemma~\ref{FancyBirk}.
 Then the full measure subset $\underset m\cap \underset n\cup
 \hat{V}_n(\frac1m)$
 satisfies the required property.
 \end{proof}

    \begin{lemma}\label{PR}
     Let $\cR$ be as in Theorem~\ref{TR}.
     Then if $A\in\cR$, $F\in\cS(A)$  we have
     \begin{enumerate}
      \item\label{PR0}  If $a,\,b\in\A(A)$ then $S_A(a,b)+S_A(b,a)=0$.
      \item\label{PR1}  If $a\in\A(A)=\text{supp}(\mu)$ then $F(x)=F(a)+S_A(a,x)$ for all $x\in\Sigma$.
      \end{enumerate}

    \end{lemma}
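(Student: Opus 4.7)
The plan is to prove both parts simultaneously by analyzing how the calibrated sub-action $F$ behaves along forward orbits in $\A(A)$ and along a backward calibrated orbit from an arbitrary $x\in\Sigma$.

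First I introduce the error $R := F\circ T - F - A + m_A \ge 0$ of the calibrated sub-action. Since $\mu$ is $T$-invariant and $\int A\,d\mu = m_A$, one has $\int R\,d\mu = 0$; being continuous and non-negative, $R$ must vanish on $\text{supp}(\mu)=\A(A)$. As $\A(A)$ is $T$-invariant (it is the support of an invariant measure), this yields the cohomological identity
\[
F(T^n a) - F(a) = \sum_{i=0}^{n-1}[A(T^i a) - m_A], \qquad a\in\A(A),\; n\ge 1.
\]
The uniqueness in $\cM(A)$ makes $\mu$ ergodic, so for $\mu$-a.e.\ $a\in\A(A)$ the forward orbit is dense in $\A(A)$. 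Fix such an $a$ and any $b\in\A(A)$, and pick $n_k\to\infty$ with $T^{n_k}a\to b$. The identity's left-hand side tends to $F(b)-F(a)$, while its right-hand side is the Birkhoff sum along the explicit orbit from $a$ to $T^{n_k}a$, hence $\le S_A(a,T^{n_k}a)$, which tends to $S_A(a,b)$ by Lemma~\ref{LAS}.\eqref{SC1}. Combined with the general sub-action bound $F(b)-F(a)\ge S_A(a,b)$, this forces
\begin{equation*}
F(b)=F(a)+S_A(a,b) \qquad\text{for $\mu$-a.e.\ } a\in\A(A) \text{ and every } b\in\A(A). \tag{$\star$}
\end{equation*}

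For part (1), I first upgrade $(\star)$ to every $a\in\A(A)$: take $\mu$-generic $a_n\to a$, so $S_A(a_n,b)=F(b)-F(a_n)\to F(b)-F(a)$ by continuity of $F$; combining with the general bound $F(b)-F(a)\ge S_A(a,b)$ and the upper semi-continuity of $S_A(\cdot,b)$ (inherited from the $\inf_\varepsilon\sup$ definition) forces $F(b)-F(a)=S_A(a,b)$ for \emph{all} $a\in\A(A)$. Applying the same argument with the roles of $a$ and $b$ reversed yields $F(a)-F(b)=S_A(b,a)$, and adding the two identities gives $S_A(a,b)+S_A(b,a)=0$ at once.

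For part (2), fix $x\in\Sigma$ and construct a calibrated backward orbit $x=z_0,z_1,z_2,\ldots$ with $T(z_{n+1})=z_n$ and $F(z_n)=F(z_{n+1})+A(z_{n+1})-m_A$ (possible because $F$ is calibrated). Let $b=\lim_k z_{n_k}$ be an accumulation point. I first show $b\in\A(A)$: the backward telescoping gives
\[
\sum_{i=0}^{n_k-n_j-1}[A-m_A](T^i z_{n_k}) = F(z_{n_j}) - F(z_{n_k}) \xrightarrow{j,k\to\infty} 0,
\]
and using the expanding property of the shift to shadow $z_{n_k}$ by a nearby orbit ending exactly at $b$, with the H\"older norm of $A$ controlling the Birkhoff sum correction, produces orbits from arbitrarily close to $b$ back exactly to $b$ with sum $\ge -O(\varepsilon^\alpha)$; hence $S_A(b,b)\ge 0$, so $b\in\A(A)$ by Lemma~\ref{LAS}.\eqref{Sm0}. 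Passing to the limit in the same telescoping identity also gives $F(x)-F(b)=S_A(b,x)$. Combining with the already extended $(\star)$, namely $F(b)-F(a)=S_A(a,b)$, and the triangle inequality $S_A(a,x)\ge S_A(a,b)+S_A(b,x)$ (provable by shadowing concatenation via shift expansion), together with the general lower bound $F(x)\ge F(a)+S_A(a,x)$, every inequality in the chain
\[
F(a)+S_A(a,x) \ge F(a)+S_A(a,b)+S_A(b,x) = F(b)+S_A(b,x) = F(x) \ge F(a)+S_A(a,x)
\]
must be equality, proving $F(x)=F(a)+S_A(a,x)$ for every $a\in\A(A)$.

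The main technical obstacle is verifying $b\in\A(A)$: the shadowing argument must correct the shadowing orbit's endpoint to land exactly at $b$, simultaneously keeping its starting point close to $b$ and absorbing the Birkhoff sum correction into the H\"older estimate, so that the ``return to $b$ with small sum'' conclusion survives the double limit $j,k\to\infty$. The upper/lower semi-continuity extension used in part (1) and the triangle inequality for $S_A$ invoked in part (2) are standard but rely on the same expanding-shift shadowing principle.
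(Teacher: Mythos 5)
Your proof is correct, but it reaches part (1) by a genuinely different route from the paper. The paper proves $S_A(a,b)+S_A(b,a)=0$ directly from Atkinson--Ma\~n\'e recurrence (Corollary~\ref{C.FancyBirk}): it produces $\alpha_k\to a$ returning to near $a$ in time $m_k$ with Birkhoff sum close to zero, extracts an intermediate time $n_k$ with $T^{n_k}(\alpha_k)\to b$, and then constructs shadowing orbits $a\leadsto b$ and $b\leadsto a$ whose sums split the nearly-zero quantity; this argument never touches the sub-action $F$. You instead observe that $\int R_F\,d\mu=0$ forces $R_F\equiv 0$ on $\text{supp}(\mu)=\A(A)$, so along forward orbits in $\A(A)$ the Birkhoff sum equals $F(T^n a)-F(a)$ exactly; combining this with density of a generic forward orbit, the continuity of $S_A(a,\cdot)$ for $a\in\A(A)$ (Lemma~\ref{LAS}.\eqref{SC1}), the sub-action inequality $F(b)-F(a)\ge S_A(a,b)$, and the upper semi-continuity of $S_A(\cdot,b)$ in the first argument (which does follow from the $\inf_\varepsilon\sup$ form of the definition, though the paper never states it), you get $F(b)=F(a)+S_A(a,b)$ for all $a,b\in\A(A)$ and then part (1) by symmetry. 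Your approach is more ``cohomological'' and avoids Atkinson--Ma\~n\'e altogether, at the cost of introducing (and needing to verify) the semi-continuity of $S_A$ in the first variable; the paper's approach is somewhat more technical but establishes part (1) purely as a property of the Ma\~n\'e potential. For part (2) the two arguments are essentially the same: both build a backward calibrated orbit, identify an accumulation point in $\A(A)$ via the same expanding-shift shadowing estimate, and then propagate the equality to all $a\in\A(A)$; the paper does the propagation first (``if it holds for one $a$ it holds for all'') and you do it last via a chain of inequalities using the triangle inequality for $S_A$, which is an equivalent reorganization.
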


    \begin{proof}\quad

      {\bf (\ref{PR0}).} Let $a,\,b\in\A(A)=\text{supp}(\mu)$. Since $\mu$ is ergodic, by Corollary~\ref{C.FancyBirk}
       there are sequences $\a_k\in\Sigma$, $m_k\in\na$ such that $\lim_k m_k=\infty$, $\lim_k \a_k=a$,
       $\lim_k d(T^{m_k}(\a_k),\a_k) = 0$,
       $$
       \sum_{j=0}^{m_k-1} A(T^j(\a_k))\ge \frac 1k,
       \qquad\text{ and  writing }\qquad
        \mu_k:=\frac 1{m_k} \sum_{j=1}^{m_k-1} \de_{T^{m_k}(\a_k)},
       \quad \lim_k\mu_k =\mu.
       $$
       Since $b\in\text{supp}(\mu)$ there are $n_k\le m_k$ such that $\lim_k T^{n_k}(\a_k)=b$.

       Let $\tau_k$ be the branch of the inverse of $T^{n_k}$ such that $\tau_k(T^{n_k}(\a_k))=\a_k$.
       Let $b_k:=\tau_k(b)$. Then $T^{n_k}(b_k)=b$ and
       \begin{align*}
        d(b_k,a) &\le d(b_k,\a_k)+d(\a_k,a)   \\
                 &\le \la^{n_k}\,d(b,T^{n_k}(\a_k))+ d(\a_k,a)   \\
                 &\le d(b,T^{n_k}(\a_k))+d(\a_k,a) \overset{k}\longto 0.
       \end{align*}
       We have that
       $$
       \lv\sum_{j=0}^{n_k-1}A(T^j(b_k))-\sum_{j=0}^{n_k-1}A(T^j(\a_k))\rv
       \le \frac{\lV A\rV_\a}{1-\la^\a}\;d(T^{n_k}(\a_k),b)^\a.
       $$
       \begin{align*}
        S(a,b) &\ge \limsup_k \sum_{j=0}^{n_k-1} A(T^j(b_k)) \\
               &\ge \limsup_k \sum_{j=0}^{n_k-1} A(T^j(\a_k)) - Q\, d(T^{n_k}(\a_k),b)^\a.
       \end{align*}

       Let $\tau_k$ be the branch of the inverse of $T^{m_k-n_k}$ such that
       $\tau_k(T^{m_k}(\a_k))=T^{n_k}(\a_k)$. Let $a_k:=\tau_k(a)$ Then
       $T^{m_k-n_k}(a_k)=a$ and
       \begin{align*}
        d(b,a_k) &\le d(b,T^{n_k}(\a_k))+d(T^{n_k}(\a_k),a_k)  \\
                 &\le d(b,T^{n_k}(\a_k)) +\la^{m_k-n_k}\, d(T^{m_k}(a_k),a) \\
                 &\le d(b,T^{n_k}(\a_k)) +                d(T^{m_k}(a_k),a)
                 \overset{k}\longto 0.
       \end{align*}
       Also
       $$
       \lv\sum_{j=0}^{m_k-n_k-1}A(T^j(a_k))-\sum_{j=n_k}^{m_k-1}A(T^j(\a_k))\rv
       \le \frac{\lV A\rV_\a}{1-\la^\a}\, d(a,T^{m_k}(\a_k)).
       $$
       \begin{align*}
        S(a,b) &\ge \limsup_k \sum_{j=0}^{m_k-n_k-1}A(T^j(a_k)) \\
               &\ge \limsup_k \sum_{j=n_k}^{m_k-1} A(T^j(\a_k) -Q\; d(a,T^{n_k}(\a_k))
       \end{align*}

       Therefore
        \begin{align*}
         0\ge S(a,a) &\ge S(a,b)+S(b,a) \\
                     &\ge \limsup_k \sum_{j=0}^{n_k-1}A(T^j(\a_k))
                      +\limsup_k\sum_{j=n_k}^{m_k-1}A(T^j(\a_k))  \\
                     &\ge \limsup_k \Big[\sum_{j=0}^{n_k-1}A(T^j(\a_k)) + \sum_{j=n_k}^{m_k-1}A(T^j(\a_k))\Big]\\
                     &\ge \limsup_k \frac 1k \\
                     &\ge 0.
        \end{align*}

      {\bf (\ref{PR1}).} We first prove that if for some $x_0\in\Sigma$ and $a\in\A(A)$ we have
      \begin{equation}\label{somea}
       F(x_0)=F(a)+S_A(a,x_0),
      \end{equation}
      then equation~\eqref{somea} holds for every $a\in\A(A)$.
       If $b\in\A(A)$, using item~\ref{PR0} we have that
      \begin{align*}
       F(x_0) &\ge F(b)+S(b,x_0) \\
       &\ge F(a)+S_A(a,b)+S_A(b,x_0) \\
       &\ge F(a)+S_A(a,b)+S_A(b,a)+S_A(a,x_0)\\
       &=F(a)+S_A(a,x_0)\\
       &=F(x_0).
      \end{align*}
      Therefore $F(x_0) = F(b)+S(b,x_0)$.

      It is enough to prove that given any $x_0\in\Sigma$ there is $a\in\A(A)$ such that
      the equality~\eqref{somea} holds. since $F$ is calibrated there are
       $x_k\in\Sigma$ and  $n_k\in\na$ such that $T^{n_k}(x_k)=x_0$, $\exists\lim_k x_k=a$ and
       for every $k\in\na$,
       $$
       F(x_0) = F(x_k) + \sum_{j=0}^{n_k-1}A(T^j(x_k)).
       $$
       We have that
       \begin{align*}
        S_A(a,x_0)&\ge \limsup_k\sum_{j=0}^{n_k-1}A(T^j(x_k))\\
        &= \limsup_k F(x_0)-F(x_k) \\
        &= F(x_0)-F(a) \\
        &\ge S(a,x_0).
       \end{align*}
       Therefore equality~\eqref{somea} holds.

       It remains to prove that $a\in\A(A)$, i.e. that $S_A(a,a)=0$.
       We can assume that the sequence $n_k$ is increasing.
       Let $m_k=n_{k+1}-n_k$. Then $T^{m_k}(x_{k+1})=x_k$. Let $\tau_k$ be the branch of the inverse
       of $T^{m_k}$ such that $\tau_k(x_k)=x_{k+1}$ and $a_{k+1}:=\tau_k(a)$.
       We have that
       $$
       \lv \sum_{j=0}^{m_k-1}A(T^j(a_{k+1})) -\sum_{j=0}^{m_k-1}A(T^j(x_{k+1}))\rv
       \le \frac{\lV A\rV_\a}{1-\la^{\a}}\; d(a,x_k)^\a.
       $$
       Since $x_k\to a$ we have that
       \begin{align*}
        d(a_{k+1},a)&\le d(a_{k+1},x_{k+1})+d(x_{k+1},a) \\
        &\le \la^{m_k}\,d(x_k,a)+d(x_{k+1},a) \\
        &\le d(x_k,a)+d(x_{k+1},a) \overset{k}\longrightarrow 0.
       \end{align*}
       Therefore
       \begin{align*}
        0\ge S_A(a,a) &\ge \limsup_k \sum_{j=0}^{m_k-1}A(T^j(a_{k+1}))\\
        &\ge \limsup_k \sum_{j=0}^{m_k-1}A(T^j(x_{k+1})) - Q\; d(a,x_k)^\a\\
        &= \limsup_k F(x_k)-F(x_{k+1})- Q\; d(a,x_k)^\a
        \\
        &=0.
       \end{align*}

    \end{proof}

The above result (2) is true for $F$ only continuous.

\begin{corollary}\label{CPR}
     Let $\cR$ be as in Theorem~\ref{TR}.
     Then if $A\in\cR$, $F\in\cS(A)$  we have
     \begin{enumerate}
      \item\label{PR2}  If $x\notin \A(A)$ then $I_F(x)>0$.
      \item\label{PR3}  If $x\notin \A(A)$ and $T(x)\in\A(A)$ then $R_F(x)>0$.
     \end{enumerate}
    \end{corollary}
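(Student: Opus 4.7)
The plan is to exploit the fact that under the hypothesis $A\in\cR$, every H\"older calibrated subaction is determined up to an additive constant. Concretely, since $\cM(A)=\{\mu\}$, we have $\A(A)=\text{supp}(\mu)\ne\emptyset$, so one can fix once and for all some $a\in\A(A)$. Applying Lemma~\ref{PR}(\ref{PR1}) to any $F\in\cS(A)$ gives $F(x)=F(a)+S_A(a,x)$ for every $x\in\Sigma$. Consequently, if $F,F'\in\cS(A)$, then $F-F'\equiv F(a)-F'(a)$ is a constant. This constant is killed by the differences defining the error, so $R_F=R_{F'}$ and hence $I_F=I_{F'}$ for any two calibrated subactions.

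Once this rigidity is established, Proposition~\ref{AS} does almost all the work. That proposition identifies $\A(A)$ as the intersection $\bigcap_{F\in\cS(A)}I_F^{-1}\{0\}$, and because the indexing family $\{I_F\}$ has collapsed to a single function, the intersection reduces to $I_F^{-1}\{0\}$ for any fixed H\"older calibrated subaction $F$. In other words, for $A\in\cR$, membership in the Aubry set is detected by a single calibrated $F$.

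From this, part (\ref{PR2}) is immediate: if $x\notin\A(A)$, then $x\notin I_F^{-1}\{0\}$, and combining with the general inequality $I_F\ge0$ one gets $I_F(x)>0$. For part (\ref{PR3}) I use the telescoping identity coming from the definition of $I_F$, namely
$$
I_F(x) = R_F(x) + \sum_{i\ge 1} R_F(T^i(x)) = R_F(x) + I_F(T(x)).
$$
The hypothesis $T(x)\in\A(A)$, together with the reduction $\A(A)=I_F^{-1}\{0\}$ just established, yields $I_F(T(x))=0$. Applying (\ref{PR2}) to $x\notin\A(A)$ gives $I_F(x)>0$, whence $R_F(x)=I_F(x)>0$.

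The only conceptually nontrivial step is the first one: noticing that the genericity condition $A\in\cR$ forces all H\"older calibrated subactions to differ by constants, via Lemma~\ref{PR}(\ref{PR1}). Everything else is bookkeeping from the definitions of $R_F$ and $I_F$ and from Proposition~\ref{AS}; no compactness, approximation, or additional limiting argument is needed.
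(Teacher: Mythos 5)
Your proposal is correct and follows essentially the same route as the paper: both first invoke Lemma~\ref{PR}(\ref{PR1}) to conclude that the H\"older calibrated subaction is unique modulo constants (hence $I_F$ is independent of $F$), then use Proposition~\ref{AS} to identify $\A(A)$ with $[I_F=0]$, and finally telescope $I_F(x)=R_F(x)+I_F(T(x))$ for part~(\ref{PR3}). The only cosmetic difference is that you state the telescoping identity explicitly, whereas the paper writes the two series inline.
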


\begin{proof}\quad

     {\bf (\ref{PR2}).} By lemma~\ref{PR}.\eqref{PR1} modulo adding a constant there
     is only one H\"older calibrated sub-action $F$ in $\cS(A)$. Then by proposition~\ref{AS},
     $\A(A)=[I_F=0]$. Since $I_F\ge 0$, this proves item~\ref{PR2}.

     {\bf (\ref{PR3}).}
      Since $T(x)\in\A(A)$
      $$
       I_F(x) =\sum_{n\ge 1} R_F(T^n(x)) =0.
      $$
      Since $x\notin\A(A)$, by item~\ref{PR1} and proposition~\ref{AS},
      $\A(A)=[I_F=0]$. Then
      $$
      I_F(x) =\sum_{n\ge 0} R_F(T^n(x)) >0.
      $$
      Hence $R_F(x)>0$.
\end{proof}

   \bigskip
   \bigskip
   \bigskip

\begin{lemma}\label{LPL}\quad
\begin{enumerate}
\item\label{lpl1} $A\mapsto m_A$ has Lipschitz constant 1.

\item\label{lpl5} Fix $x_0 \in \Sigma$.
The set $\cS(A)$ of $\a$-H\"older calibrated sub-actions $F$ for $A$  with $F(x_0)=0$
is an equicontinuous family. In fact
$$
 \sup_{F\in\cS(A)}|F|_\alpha <\infty.
$$

\item\label{lpl55} The set $\cS(A)$ of $\a$-H\"older continuous calibrated sub-actions is closed under the $C^0$ topology.

\item\label{lpl6} If  $\#\cM(A)=1$, $A_n\overset{n}\to A$ uniformly, $\sup_n   |A_n|_\alpha<\infty$, and, $F_n\in\cS(A_n)$, \\
            then, $\lim_n F_n = F$ uniformly.

\item\label{lpl27} $A\le B \quad\&\quad m_A=m_B \quad \Longrightarrow \quad S_A\le S_B$.

\item\label{lpl2} $\limsup\limits_{B\to A}\N(B) \subseteq \N(A)$, where
$$
\limsup\limits_{B\to A}\N(B) =\big\{\, \lim_n x_n \;\big|\; x_n\in \N(B_n), \;B_n \overset{n}{\to} A, \,\exists\lim_n x_n\,\big\}
$$

\item\label{lpl3} If $A\in\cR$
                             then
                             $$
                             \lim\limits_{B\to A}d_H(\cA(B),\cA(A))=0,
                             $$ \\
where $d_H$ is the Hausdorff distance.

\item\label{lpl8}  If $A\in\cR$ with $\cM(A)=\{\,\mu\,\}$ and $\nu_B\in \cM(B)$ then
$$
\lim_{B\to A}d_H\big(\text{supp}(\nu_B),\text{supp}(\mu)\big)= 0.
$$
\item\label{lpl9} If $A\in\cR$ then
$$
\lim_{B\to A}d_H\big(\M(B),\A(A)\big)= 0.
$$

\end{enumerate}
\end{lemma}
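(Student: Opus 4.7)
The plan is to treat the nine items roughly in order, since later items depend on regularity and compactness information from the earlier ones. Item~(\ref{lpl1}) is immediate from $m_A=\sup_\nu\int A\,d\nu$ over invariant $\nu$, which yields $|m_A-m_B|\le\lV A-B\rV_0$. For the uniform H\"older bound in~(\ref{lpl5}) I would use the standard pre-orbit shadowing trick: given $x,y\in\Sigma$ and $n\in\na$, calibration provides $x_0$ with $T^n x_0=x$ and $F(x)=F(x_0)+\sum_{j=0}^{n-1}(A(T^j x_0)-m_A)$; on the corresponding inverse branch of $T^n$ pick $y_0$ with $T^n y_0=y$, so that $d(T^j x_0,T^j y_0)\le\lambda^{n-j}d(x,y)$ for $0\le j<n$. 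The sub-action inequality along $y_0\mapsto y$ gives $F(y)\ge F(y_0)+\sum_j(A(T^j y_0)-m_A)$; subtracting, using H\"older regularity of $A$ and letting $n\to\infty$ (so $d(x_0,y_0)\to 0$), yields $|F(x)-F(y)|\le\frac{|A|_\alpha}{1-\lambda^\alpha}d(x,y)^\alpha$, a bound depending only on $A$. Item~(\ref{lpl55}) then follows because the calibration equation $F(x)=\max_{T(y)=x}[F(y)+A(y)-m_A]$ involves only the $d$ preimages, hence passes to $C^0$ limits, and the H\"older constant from~(\ref{lpl5}) is preserved. For~(\ref{lpl27}), $A\le B$ and $m_A=m_B$ give $\sum_j(A(T^j z)-m_A)\le\sum_j(B(T^j z)-m_B)$ on every orbit segment, so taking the supremum in the definition of $S$ yields $S_A\le S_B$.

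For~(\ref{lpl6}) I normalize $F_n(x_0)=0$. The bound $\sup_n|A_n|_\alpha<\infty$ combined with~(\ref{lpl5}) gives $\sup_n|F_n|_\alpha<\infty$, so $\{F_n\}$ is $C^0$-precompact by Arzel\`a--Ascoli. Any subsequential limit $F$ lies in $\cS(A)$ by~(\ref{lpl55}), using uniform convergence $A_n\to A$ and $m_{A_n}\to m_A$ from~(\ref{lpl1}). The hypothesis $\#\cM(A)=1$ together with Lemma~\ref{PR}(\ref{PR1}) forces every element of $\cS(A)$ to have the form $x\mapsto F(a)+S_A(a,x)$ for any $a\in\A(A)$; Lemma~\ref{PR}(\ref{PR0}) makes this independent of $a$ up to an additive constant, so $\cS(A)$ is a single affine line, and the normalization makes it a singleton. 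Hence all subsequential limits coincide and $F_n\to F$ uniformly.

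For~(\ref{lpl2}), take $x_n\in\N(B_n)$ with $x_n\to x$ and $B_n\to A$; pick $F_n\in\cS(B_n)$ with $I_{F_n}(x_n)=0$ and normalize $F_n(x_0)=0$. Pre-compactness and closedness~((\ref{lpl5}),~(\ref{lpl55})) yield $F_{n_k}\to F\in\cS(A)$ uniformly, and the identity $R_{F_n}=F_n\circ T-F_n-B_n+m_{B_n}$ gives $R_{F_{n_k}}\to R_F$ uniformly on $\Sigma$. For each fixed $i$, $T^i x_{n_k}\to T^i x$ and uniform convergence give $R_F(T^i x)=\lim_k R_{F_{n_k}}(T^i x_{n_k})$; non-negativity and Fatou on counting measure on $\na$ then yield $I_F(x)=\sum_i R_F(T^i x)\le\liminf_k\sum_i R_{F_{n_k}}(T^i x_{n_k})=0$, so $x\in\N(A)$. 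For the three Hausdorff-convergence items, upper semicontinuity at $A\in\cR$ follows from~(\ref{lpl2}) via $\text{supp}(\nu_B)\subseteq\M(B)\subseteq\cA(B)\subseteq\N(B)$ together with the identity $\N(A)=\cA(A)$ at $A\in\cR$ (Proposition~\ref{AS} combined with the singleton structure of $\cS(A)$ proved in~(\ref{lpl6})). For lower semicontinuity, pick $\nu_B\in\cM(B)$; Prokhorov yields a weak subsequential limit $\nu_{B_n}\to\nu$, and $\int A\,d\nu=\lim\int B_n\,d\nu_{B_n}+\lim\int(A-B_n)\,d\nu_{B_n}=\lim m_{B_n}=m_A$ by~(\ref{lpl1}) and $B_n\to A$ uniformly, forcing $\nu=\mu$. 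Weak convergence then gives $\text{supp}(\mu)\subseteq\liminf\text{supp}(\nu_{B_n})\subseteq\liminf\M(B_n)\subseteq\liminf\cA(B_n)$, completing~(\ref{lpl3}),~(\ref{lpl8}), and~(\ref{lpl9}).

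The main obstacle is the interchange of limit and infinite sum in~(\ref{lpl2}): although $R_{F_n}\to R_F$ uniformly on $\Sigma$, the iterates $T^i x_n$ diverge from $T^i x$ at exponential rate in $i$, so termwise convergence cannot be strengthened to a uniform-in-$i$ statement. The correct substitute is Fatou for counting measure in the form $\sum_i\liminf\le\liminf\sum_i$, which only requires non-negativity $R_{F_n}(T^i x_n)\ge 0$ guaranteed by the sub-action inequality. Once this step is accepted, the remaining items become essentially bookkeeping around the equicontinuity of~(\ref{lpl5}) and the singleton structure of $\cS(A)$ when $\cM(A)$ is a singleton.
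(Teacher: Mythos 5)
Your argument is correct and follows the same overall architecture as the paper: the Lipschitz bound for $A\mapsto m_A$, the uniform H\"older estimate, Arzel\`a--Ascoli plus uniqueness of the normalized calibrated sub-action when $\cM(A)$ is a singleton, and the Hausdorff continuity obtained from $\limsup$ containment via~(\ref{lpl2}) together with a weak* lower bound. Two individual steps differ in method and are worth comparing.

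For~(\ref{lpl5}) you push the two points back along a long common pre-orbit, telescope, and let $n\to\infty$, using continuity of $F$ to kill the boundary term $F(x_0)-F(y_0)$. The paper instead does a single-step self-improvement: setting $K=|F|_\alpha$ and $a=|A|_\alpha$, the one-preimage calibration inequality gives $K\le\la^\a(K+a)$, hence $K\le\frac{\la^\a}{1-\la^\a}\,a$. Both give the same bound; the paper's version is a more compact fixed-point argument, while yours has the minor advantage of not needing to assume $|F|_\alpha<\infty$ at the outset (it derives H\"older continuity from mere continuity), which is what the paper's side remark alludes to.

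For~(\ref{lpl2}) you flag the interchange of limit and infinite sum as the central difficulty and resolve it by Fatou for counting measure. The paper's route is simpler and avoids Fatou entirely: since $I_{F_n}(x_n)=0$ and every summand $R_{F_n}(T^k x_n)\ge 0$, each term vanishes individually, $R_{F_n}(T^k x_n)=0$ for all $k,n$. Fixing $k$ and combining the uniform convergence $R_{F_n}\to R_F$ with the common H\"older constant gives $R_F(T^k x_0)=\lim_n R_{F_n}(T^k x_n)=0$, so $I_F(x_0)=0$ term by term, with no interchange to justify. Your Fatou argument is logically sound, but it is heavier than needed because the summands on the approximating sequence are identically zero, not merely summing to zero. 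The remaining items agree with the paper's proof in substance, and you correctly use the portmanteau-type inclusion $\text{supp}(\mu)\subseteq\liminf_n\text{supp}(\nu_{B_n})$ for the lower Hausdorff bound.
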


If $X, \, Y$ are two metric spaces and $\F:X\to 2^Y=\bP(Y)$ is a set valued function,
define
\begin{align*}
\limsup_{x\to x_0} \F(x) &= \bigcap_{\e>0}\;\;\bigcap_{\de>0}\;\;\bigcup_{d(x,x_0)<\de}V_\e(\F(x)),
\\
\liminf_{x\to x_0} \F(x) &= \bigcap_{\e>0}\;\;\bigcup_{\de<0}\;\;\bigcap_{d(x,x_0)<\de} V_\e(\F(x)),
\end{align*}
where
$$
V_\e(C)=\bigcup_{y\in C} \{\, z\in Y \;|\; d(z,y) < \e\,\}.
$$

\begin{proof}\quad

\noindent{\bf (\ref{lpl1}).} We have that $A\le B+\lV A-B\rV_0$, then
\begin{align*}
\int A\;d\mu &\le \int B\;d\mu+\lV A-B\rV_0, \qquad \forall \mu\in\cM(T),\\
\int A\;d\mu &\le \sup_{\mu\in\cM(T)}\int B\;d\mu+\lV A-B\rV_0 = m_B+\lV A-B\rV_0,\\
m_A&\le m_B+\lV A-B\rV_0.
 \end{align*}
Similarly $m_B\le m_A+\lV A-B\rV_0$ and then
$\lv m_A-m_B\rv\le \lV A-B\rV_0$.

See also \cite{Jenkinson1} and \cite{CLT} for a proof.

\bigskip

\noindent{\bf (\ref{lpl5}).} Let $\e>0$ and $0<\la<1$ be such that for any $x\in\Sigma$ there is an
inverse branch $\tau$ of $T$ which is defined on the ball
$B(T(x),\e):=\{\,z\in \Sigma\;|\; d(z,T(x))< \e\,\}$, has Lipschitz constant $\la$ and $\tau(T(x))=x$.

Let $F\in\cS(A)$. Let
$$
K:=|F|_\alpha:=\sup_{d(x,y)<\e}\frac{\lv F(x)-F(y)\rv}{d(x,y)^\a},
\qquad
a:=|A|_\alpha:=\sup_{d(x,y)<\e}\frac{\lv A(x)-A(y)\rv}{d(x,y)^\a}
$$
be H\"older constants for $F$ and $A$. Given $x, \; y \in \Sigma$ with $d(x,y)<\e$
let $\tau_i$, $i=1,\ldots,m(x)\le M$ be the inverse branches for $T$  about $x$ and let
$x_i=\tau_i(x)$, $y_i=\tau_i(y)$.
We have that
\begin{gather*}
 \lv F(x_i)-F(y_i)\rv K; \la^\a\; d(x,y)^\a, \qquad
 \lv A(x_i)-A(y_i)\rv \a\; \la^\a\; d(x,y)^\a. \\
F(x_i)+A(x_i)\le F(y_i)+A(y_i)+(K+a)\;\la^\a\;d(x,y)^\a,
\\
\max_i \big[F(x_i)+A(x_i)-m_A\big]\le \max_i \big[F(y_i)+A(y_i)-m_A\big]+(K+a)\;\la^\a\;d(x,y)^\a,
\\
F(x)\le F(y) + (K+a)\; \la^\a\; d(x,y)^\a,
\end{gather*}
Then $|F|_\alpha\le \la^\a\,(  |F|_\alpha+ |A|_\alpha)$
and hence
\begin{equation}\label{lpl5e}
|F|_\alpha \le \frac{\la^\a}{(1-\la^\a)}\, |A|_\alpha.
\end{equation}
This implies the equicontinuity of $\cS(A)$.

The proof of the above result could be also get if we just assume that $F$ is continuous.

\noindent{\bf (\ref{lpl55}).} It is easy to see that uniform limit of calibrated sub-actions is a sub-action,
and it is calibrated because the number of inverse branches of $T$ is finite, i.e.
$\sup_{y\in\Sigma} \#T^{-1}\{y\}<\infty$. By \eqref{lpl5} all $C^\a$ calibrated sub-actions have a common
H\"older constant, the uniform limits of them have the same H\"older constant.

\noindent{\bf (\ref{lpl6}).}  The family $\{ F_n\}$ satisfies $F_n(x_0)=0$ and by inequality \eqref{lpl5e}
$$
 |F_n|_\alpha<\frac{\la^\a}{(1-\la^\a)}\;\sup_n |A_n|_\alpha <\infty.
$$
Hence $\{ F_n\}$ is equicontinuous. By Arzel\'a-Ascoli theorem it is enough to prove
that there is a unique $F(x)=S_A(x_0,x)$ which is the limit of any convergent subsequence
of $\{ F_n\}$. Since $\sup  |A_n|_\alpha<\infty$, by inequality \eqref{lpl5e}, any such limit
is $\a$-H\"older. Since by lemma~\ref{PR}.\eqref{PR1},
$\cS(A)\cap[F(x_0)=0]=\{\,F(x)= S_A(x_0,x)\,\}$, it is enough to prove
that any limit of a subsequence of $\{ F_n\}$ is a calibrated sub-action. But this follows form the
continuity of $A\mapsto m_A$, the equality
$$
F_n (x) =\max_{T(y)=x} F_n(y)+A_n(y)-m_{A_n}
$$
and the fact $\sup\limits_{x\in\Sigma} \#(T^{-1}\{x\})<\infty$.

\bigskip

{\bf(\ref{lpl27}).}
The proof
follows from the expression
$$
S_A(x,y):=\lim_{\e\to 0}
\Big[\sup\Big\{\sum_{i=0}^{n-1}\big[A(T^i(z))-m_A\big]\;\Big|\;n\in\na,\;T^n(z)=y,\;d(z,x)<\e\Big\} \Big].
$$

{\bf(\ref{lpl2}).}  Let $x_n\in B_n\to A$ be such that $x_n\to x_0$. Let $F_n\in \cS(A)$ be such that
 $I_{F_n}(x_n)=0$. Adding a constant we can assume that $F_n(x_0)=0$ for all $n$. By \eqref{lpl5},
taking a subsequence we can assume that $\exists F = \lim_n F_n$ in the $C^0$ topology.
Then $F$ is a $C^\a$ calibrated sub-action for $A$. Also $R_{F_n}\to R_F$ uniformly and there is
a common H\"older constant $C$ for all the $R_{F_n}$. We have that
\begin{align*}
| R_{F_n}(T^k(x_n))-& R_F(T^k(x_0))|\le\\
     &\lv R_{F_n}(T^k(x_n))-R_{F_n}(T^k(x_0))\rv + \lv R_{F_n}(T^k(x_0))-R_F(T^k(x_0))\rv
     \\
     &\le C\; d(T^k(x_n),T^k(x_0))^\a + \lV R_{F_n}-R_F\rV \overset{n}\longrightarrow 0
\end{align*}
Since for all $n,\,k$, $R_{F_n}(T^k(x_n))=0$, we have that $R_F(T^k(x_0))=0$ for any $k$. Hence
$I_F(x_0)=0$ and then $x_0\in\N(A)$.

\bigskip

{\bf(\ref{lpl3}).} By Lemma~\ref{PR}.\eqref{PR1},
 there is only one calibrated sub-action modulo adding a constant.
Then by Proposition~\ref{AS},  $\A(A)=\N(A)$. Then by \eqref{lpl2} $\limsup_{B\to A}\A(B)\subset\A(A)$.
It is enough to prove that for any $x_0\in\A(A)$ and $B_n\to A$, there is $x_n\in\A(B_n)$
such that $\lim_n x_n = x_0$. Let $\mu_n\in\cM(B_n)$. Then $\lim_n\mu_n=\mu$ in the weak* topology.
Given $x_0\in\A(A)=\text{supp}(\mu)$ we have that
$$
\forall \e>0\quad\exists N=N(\e)>0\quad\forall n\ge N\;:\quad\mu_n(B(x_0,\e))>0.
$$
We can assume that for all $m\in\na$, $N(\frac 1m)<N(\frac 1{m+1})$.
For $N(\frac 1m)\le n<N(\frac 1{m+1})$ choose $x_n\in\text{supp}(\mu_n)\cap B(x_0,\frac 1m)$.
Then $x_n\in \A(B_n)$ and $\lim_n x_n=x_0$.
.

\bigskip

{\bf(\ref{lpl8}).}  For any $B\in\cF$ We have that
$$
\text{supp}(\nu_B)\subseteq \A(B)\subseteq\N(B).
$$
By item~\ref{lpl3},
$$
\limsup_{B\to A}\text{supp}(\nu_B)\subseteq\A(A)=\text{supp}(\mu).
$$
It remains to prove that
$$
\liminf_{B\to A}\text{supp}(\nu_B)\supseteq \text{supp}(\mu).
$$
 But this follows
from the convergence $\lim_{B\to A}\nu_B=\mu$ in the weak* topology.

\bigskip

{\bf(\ref{lpl9}).}  Write $\cM(A)=\{\,\mu\,\}$. By items \eqref{lpl3} and \eqref{lpl8} we have that
\begin{gather*}
\limsup_{B\to A}\M(B)\subseteq \limsup_{B\to A}\A(B)\subseteq \A(B),
\\
\A(A)=\text{supp}(\mu)\subseteq\liminf_{B\to A}\M(B).
\end{gather*}

\end{proof}

  \bigskip
 \bigskip

 {\bf Proof of Theorem~\ref{TR}.}

 The set
 $$
 \cD:=\{\, A\in\cF\,|\,\#\cM(A)=1\,\}
 $$
 is dense (c.f. \cite{CLT}). We first prove that $\cD\subset\ov{\cR}$, and hence that $\cR$ is dense.

 Given $A\in\cD$ with $\cM(A)=\{\mu\}$ and $\e>0$, let $\psi\in\cF$ be such that $\lV\psi\rV_0+\lV\psi\rV_\a< \e$
$\psi\le 0$, $[\psi=0]=\text{supp}(\mu)$.
It is easy to see that $\cM(A+\psi)=\{\mu\}=\cM(A)$.
 Let $x_0\notin \text{supp}(\mu)$.
Given $\de>0$, write
 $$
 S_A(x_0,x_0;\de):=\sup\Big\{\, \sum_{k=0}^{n-1} A(T^k(x_n))\;|\; T^n(x_n)=x_0,\; d(x_n,x_0)<\de\,\Big\}.
 $$

If  $T^n(x_n) = x_0$ is such that $d(x_n,x_0)<\de$ then
\begin{align*}
 \sum_{k=0}^{n-1} (A+\psi)(T^k(x_n)) &\le S_A(x_0,x_0;\de) +\sum_{k=0}^{n-1}\psi(T^k(x_n)) \\
 &\le S_A(x_0,x_0;\de) + \psi(x_n).
\end{align*}
Taking $\limsup_{\de\to 0}$,
$$
S_{A+\psi}(x_0,x_0)\le S_A(x_0,x_0)+\psi(x_0)\le\psi(x_0)<0.
$$
Hence $x_0\notin\A(A+\psi)$. Since by lemma~\ref{LAS}.\eqref{mMA}, $\text{supp}(\mu)\subset\A(A+\psi)$,
then $\A(A+\psi)=\text{supp}(\mu)$  and hence $A+\psi\in\cR$.

Let
$$
\cU(\e):=\{\, A\in\cF\;|\; d_H(\A(A),\M(A))<\e\,\}.
$$
From the triangle inequality
$$
d_H(\A(B),\M(B)) \le d_H(\A(B),\A(A))+d_H(\A(A),\M(B))
$$
and items \eqref{lpl3} and \eqref{lpl9} of lemma~\ref{LPL},
we obtain  that  $\cU(\e)$ contains a neighborhood of $\cD$.
Then the set
$$
\cR = \bigcap_{n\in\na}\cU(\tfrac 1n)
$$
contains a residual set.

\qed

\bigskip
\bigskip

\section{Duality.}\label{SSD}

In this section we have to consider properties for $A^*$ which depends of the initial potential $A$.

We will consider now the specific example described before.
We point out that the results presented bellow should hold in general for natural extensions.

We will assume that $T$ and $\si$ are topologically mixing.

Remember that
$\T:\hat{\Sigma}\to\hat{\Sigma}$,
\begin{align*}
 \T (x,\om)&=(T(x),\tau_x(\om))\;, &
\T^{-1}(x,\om)&=(\tau_\om(x),\si(\om))
&&
\end{align*}

 Given $A\in\cF$ define $\De_A:\Sigma\times\Sigma\times\Sigma\to \re$
 as
 $$
 \De_A(x,y,\om):=\sum_{n\ge 0}A(\tau_{n,\om}(x))-A(\tau_{n,\om}(y))
 $$
 where
 $$
 \tau_{n,\om}(x)=\tau_{\si^n\om}\circ\tau_{\si^{n-1}\om}\circ\cdots\circ\tau_\om(x).
 $$

 Fix $\ox\in\Sigma$ and $\ow\in\Si$.

The involution $W$-kernel can be defined as
 $W_A:\Sigma\times\Si\to\re$, $W_A(x,\om)=\De_A(x,\ox,\om)$.
Writing $\A:=A\circ\pi_1:\T\to\re$, we have that
$$
W(x,\om)=\sum_{n\ge 0}\A(\T^{-n}(x,\om))-\A(\T^{-n}(\ox,\om))
$$

We can get the {\it dual function}  $A^*:\Si\to\re$ as
$$
A^*(\om):=(W_A\circ \T^{-1}- W_A+ A\circ\pi_1)(x,\om).
$$

 Remember that we consider here the metric on $\Si$ defined by
 $$
 d(\om,\nu):= \la^N,\qquad N:=\min\{\,k\in\na\,|\,\om_k\ne \nu_k \,\}
 $$
 Then $\la$ is a Lipschitz constant for both $\tau_x$, and $\tau_\om$
 and also for $\T\vert_{\{x\}\times\Si}$ and $\T^{-1}\vert_{\Sigma\times\{\om\}}$


Write $\cF:=\CS$ and $\cF^*:=\CS$.
Let $\cB$ and $\cB^*$ be the set of coboundaries
 \begin{align*}
\cB:&=\{\,u\circ T - u \;|\; u\in\CS\,\}, \\
\cB^*:&=\{\,u\circ \si - u \;|\; u\in\CS\,\}.
 \end{align*}

Remember that
 $$
 ||z||_\alpha= \lV z\rV_0+ |z|_\alpha.
 $$

 We also use the notation $[z]_\alpha = ||z||_\alpha$.

 \begin{lemma}\label{LCob}\quad
 \begin{enumerate}
 \item\label{lcob.1} $z\in\cB \qquad \iff \qquad z\in\CS \quad \&\quad \forall\mu\in\cM(T),\quad \int z\,d\mu=0$.
 \item\label{lcob.2} The linear subspace $\cB\subset\CS$ is closed.
 \item\label{lcob.3} The function
                     $$
             \lB z+\cB\rB_\a =\inf_{b\in\cB}\lB z+b\rB_\a
                     $$
                     is a norm in $\cF/\cB$.
 \end{enumerate}
 \end{lemma}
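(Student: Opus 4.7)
The plan is to establish the three items in order, with item (1) serving as the key tool for both (2) and (3).

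For item (1), I would first dispatch the easy forward implication: if $z = u\circ T - u$ with $u \in \CS$, then the Birkhoff sums $\sum_{k=0}^{n-1} z\circ T^k = u\circ T^n - u$ are uniformly bounded in $n$, so the Birkhoff ergodic theorem forces $\int z\,d\mu = 0$ for every $T$-invariant $\mu$. For the converse, the plan is to appeal to the classical Livsic rigidity theorem: since $T$ is the topologically mixing full shift (assumed at the start of this section) and periodic orbit measures are weak-$*$ dense in $\cM(T)$, the hypothesis reduces to the cohomological condition $\sum_{k=0}^{n-1} z(T^k p) = 0$ for every periodic point $p$ with $T^n p = p$. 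Livsic's theorem (together with the H\"older regularity of $z$ and the expansivity of $T$) then produces a H\"older transfer function $u \in \CS$ such that $z = u\circ T - u$.

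For item (2), closedness follows immediately from (1). Writing $\Phi_\mu(z) := \int z\,d\mu$, each $\Phi_\mu$ is a linear functional bounded with respect to $\lV\cdot\rV_0$, and a fortiori with respect to the stronger H\"older norm $\lV\cdot\rV_\a$. Then
$$\cB \;=\; \bigcap_{\mu \in \cM(T)} \ker \Phi_\mu$$
is an intersection of closed hyperplanes of $(\CS,\lV\cdot\rV_\a)$, hence closed.

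For item (3), the expression $\lB z+\cB\rB_\a$ is the standard quotient seminorm on $\CS/\cB$, so absolute homogeneity and the triangle inequality are automatic from the fact that $\cB$ is a linear subspace. The only nontrivial point is positive-definiteness: if $\lB z+\cB\rB_\a = 0$, there exist $b_n \in \cB$ with $\lV z+b_n\rV_\a \to 0$, i.e.\ $-b_n \to z$ in $(\CS,\lV\cdot\rV_\a)$, and the closedness of $\cB$ from item (2) then forces $z \in \cB$, i.e.\ $z+\cB$ is the zero element of the quotient. The expected main obstacle is the converse direction of item (1), namely Livsic rigidity, whose proof essentially exploits the topological mixing and the H\"older regularity to construct $u$ along dense backward orbits; items (2) and (3) are essentially formal once (1) is in hand.
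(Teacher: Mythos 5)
Your proof is correct and follows essentially the same route as the paper: the key converse direction of item (1) is exactly the Livsic rigidity theorem (the paper cites Bowen's Theorem 1.28, which is the same result), item (2) is the same continuity argument using (1) and boundedness of $\mu\mapsto\int z\,d\mu$ with respect to $\lV\cdot\rV_0$, and item (3) is the standard quotient-norm observation from (2). One small simplification: for the easy direction of (1) you need neither bounded Birkhoff sums nor the ergodic theorem, since $\int(u\circ T-u)\,d\mu=0$ is immediate from the $T$-invariance of $\mu$.
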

 \begin{proof}\quad

 {\bf  (\ref{lcob.1}).}  This follows\footnote{Theorem 1.28 of R. Bowen \cite{Bow} asks for $T$ to be topologically mixing.} from \cite{Bow},  Theorem 1.28  (ii) $\then$ (iii).

 {\bf  (\ref{lcob.2}).}  We prove that the complement $\cB^c$ is open. If $z\in\CS\setminus\cB$, by
 item~\eqref{lcob.1}, there is $\mu\in\cM(T)$ such that $\int z\,d\mu\ne 0$.
 If $u\in\CS$ is such that
 $$
 \lV u-z\rV_0 <\frac 12\; \lv \int z\;d\mu\rv
 $$
 then $\int u\,d\mu\ne 0$
 and  hence $u\notin\cB$.

 {\bf  (\ref{lcob.3}).}  This follows from item \eqref{lcob.2}.

 \end{proof}

\begin{lemma}\label{LLL}\quad
\begin{enumerate}
 \item\label{lll.1}  If $A$ is $C^\a$ then $A^*$ is $C^\a$.
 \item\label{lll.2} The linear map $L:C^\a(\Sigma,\re)\to C^\a(\Si,\re)$ given by $L(A)=A^*$ is continuous.
 \item\label{lll.3} $\cB\subset\ker L$.
 \item\label{lll.4} The induced linear map $L:\cF/\cB \to \cF^*/\cB^*$ is continuous.
 \item\label{lll.5} Fix one $\oom\in\Si$. Similarly the corresponding linear map
       $L^*:\cF^*\to\cF$, given by
       \begin{align*}
        L^*(\psi) &= W_\psi^*\circ\T-W_\psi+\psi\circ\pi_2\\
         &=\sum_{n\ge 0}\Psi(\T^n(x,\oom))-\Psi(\T^n(Tx,\oom)) \\
        &=\Psi(x,\oom) + \sum_{n\ge 0}\Psi(\T^n(Tx,\tau_x\oom))-\Psi(\T^n(Tx,\oom)),
       \end{align*}
 with $\Psi= \psi\circ\pi_2$, is continuous and induces a continuous linear map \linebreak
 $L^*:\cF^*/\cB^*\to\cF/\cB$, which is the inverse of $L:\cF/\cB \to \cF^*/\cB^*$.

\end{enumerate}
\end{lemma}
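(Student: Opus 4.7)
\textbf{Proof plan for Lemma \ref{LLL}.} The plan is to verify the five parts in order, with most of the work being uniform H\"older estimates on the defining series and a symmetry/uniqueness argument for the inverse.

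First I would establish part (\ref{lll.1}) by analyzing the series
$W_A(x,\om)=\sum_{n\ge 0}[A(\tau_{n,\om}(x))-A(\tau_{n,\om}(\ox))]$.
Since each $\tau_\om$ is a $\la$-contraction on $\Sigma$, the composition $\tau_{n,\om}$ contracts by $\la^{n+1}$, so the $n$-th summand is bounded by $|A|_\a\,\la^{(n+1)\a}\,d(x,\ox)^\a$, giving absolute convergence and $\a$-H\"older dependence on $x$. For H\"older dependence in $\om$, I would compare $\tau_{n,\om}$ and $\tau_{n,\om'}$ when $d(\om,\om')=\la^N$: the two compositions agree on the outermost maps, and only the inner $\max(0,n-N)$ factors differ, which after the remaining contractions contributes $O(\la^{\min(n,N)\a})$. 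Summing over $n$ gives joint $\a$-H\"older continuity of $W_A$. Then $A^*(\om)=A(\tau_\om(x))+W_A(\si\om,\tau_\om(x))-W_A(\om,x)$ is independent of $x$ and, evaluated at $x=\ox$, is visibly H\"older in $\om$.

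For parts (\ref{lll.2})--(\ref{lll.4}), all the estimates in part (\ref{lll.1}) are linear in $\lV A\rV_\a$, so $\lV A^*\rV_\a\le C\,\lV A\rV_\a$ for a constant $C=C(\la,\a)$; this gives continuity of $L$. For part (\ref{lll.3}), if $A=u\circ T-u\in\cB$, then $A(\tau_{n,\om}(x))=u(\tau_{n-1,\om}(x))-u(\tau_{n,\om}(x))$ for $n\ge 1$, so the series defining $W_A$ telescopes; a direct substitution into the formula for $A^*$ shows $A^*=u^*\circ\si-u^*$ for an explicit $u^*\in\CS$, hence $A^*\in\cB^*$. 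Part (\ref{lll.4}) then follows from (\ref{lll.2}) and (\ref{lll.3}) together with the definition of the quotient norm $\lB\,\cdot\,+\cB\rB_\a$ from Lemma~\ref{LCob}.

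For part (\ref{lll.5}), the map $L^*$ is the symmetric dual construction, replacing $(T,\tau_\om,\ox)$ by $(\si,\tau_x,\oom)$ throughout. Continuity of $L^*$ and the inclusion $\cB^*\subset\ker L^*$ follow by the same estimates and telescoping argument as above, so $L^*:\cF^*/\cB^*\to\cF/\cB$ is well-defined and continuous. The substantive content is that $L^*\circ L$ and $L\circ L^*$ induce the identity on the respective quotients. To prove $L^*(L(A))-A\in\cB$, I would start from the defining relation $A^*\circ\T=A+W_A\circ\T-W_A\circ\T\circ\T^{-1}$ and observe that $(A,W_A)$ witnesses $A$ as a dual of $A^*$ with involution kernel $W_A$ (read with the variables in the opposite role). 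Computing $L^*(A^*)$ via its series expansion then yields $A$ plus the difference of two involution kernels for $A^*$, one built from $\ox$ and one from $\oom$. By the uniqueness of involution kernels up to additive terms of the form $u(x)+v(\om)$ (Proposition in \cite{BLT}), this difference is such an additive function, and after substitution back into the dual construction it produces a coboundary in $\cB$.

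The main obstacle will be part (\ref{lll.5}): verifying that the two telescoping series defining $W_A$ and $W^*_{A^*}$, built from different anchor points $\ox$ and $\oom$, combine to produce a term of the form $u(x)+v(\om)$, so that $L^*\circ L$ reduces to the identity on $\cF/\cB$. This requires a careful reindexing of both series under $\T$ and $\T^{-1}$ and an appeal to the uniqueness of involution kernels modulo such additive functions.
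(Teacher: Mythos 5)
Your plan for parts (\ref{lll.1})--(\ref{lll.4}) is in essence the paper's: estimate the defining series using the uniform $\la$-contraction of the inverse branches, telescope for coboundaries, and then pass to quotients. One point deserves flagging in (\ref{lll.3}): you claim the telescoping gives $A^*=u^*\circ\si-u^*\in\cB^*$, but the lemma asserts $\cB\subset\ker L$, i.e. $L(u\circ T-u)=0$ on the nose. In fact the telescoping collapses $W_{u\circ T-u}(x,\om)$ to $u(x)-u(\ox)$ (the inner limits agree because the compositions $\tau_{n,\om}$ of $x$ and of $\ox$ converge to the same point), and feeding this into the formula for $A^*$ gives $A^*\equiv 0$, not merely a coboundary. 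As stated, your conclusion is weaker than the claim. It would be enough for the well-definedness of $L$ on quotients (part (\ref{lll.4})), so the downstream argument is unaffected, but the item as phrased requires the exact cancellation.

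Part (\ref{lll.5}) is where you depart from the paper, and this is also where I see the most substantive gap. The paper writes $L^*(L(A))-A$ as the restriction to $\{\om=\oom\}$ of the explicit $\T$-coboundary $W^*_{A^*}\circ\T - W^*_{A^*} + W_A\circ\T^{-1} - W_A$, and then reduces the assertion $L^*(L(A))-A\in\cB$ to verifying that this function integrates to zero against every $\mu\in\cM(T)$ (using Lemma~\ref{LCob}.(\ref{lcob.1})). That verification uses Bowen's construction of a $\T$-invariant measure $\nu$ from $\mu$ via $\nu(z)=\lim_n\mu((z\circ\T^n)^\sharp)$, together with the fact that a $\T$-coboundary integrates to zero against any $\T$-invariant measure. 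Your plan instead invokes a "uniqueness of involution kernels up to $u(x)+v(\om)$." First, your written form of the involution relation has the coboundary with the wrong sign (the paper's definition gives $\A^*\circ\T=\A+W_A-W_A\circ\T$, not $\A+W_A\circ\T-W_A$). Second, the uniqueness you need is not quite of the form you cite: when two kernels for a fixed potential produce the same dual, their difference is $\T$-invariant, hence constant, but when the duals differ (as they do here, since $L^*(A^*)$ a priori equals $A$ only modulo $\cB$), the relation between the kernels is not simply an additive separable function. Third, your closing step, "after substitution back into the dual construction it produces a coboundary in $\cB$," is the whole content of the lemma and is left unexplained, whereas the paper discharges exactly this burden through the measure-theoretic reduction to Lemma~\ref{LCob}. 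As a proof plan, this part would need a complete reworking; the concrete route is to reduce, as in the paper, to showing $\int(L^*(L(A))-A)\,d\mu=0$ for all $\mu\in\cM(T)$ and then apply Lemma~\ref{LCob}.(\ref{lcob.1}).
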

\begin{proof}\quad

  {\bf (\ref{lll.1}) and (\ref{lll.2}).} We have that
 \begin{align*}
 A^*(\om)&=\sum_{n\ge 0}\A(\T^{-n}(\ox,\om))-\A(\T^{-n}(\ox,\si\,\om))
 \\
 &= A(\ox) +\sum_{n\ge 0}\A(\T^{-n}(\tau_\om\,\ox,\si\,\om))-\A(\T^{-n}(\ox,\si\,\om))
 \end{align*}
Since $d(\T^{-n}(\tau_\om\,\ox,\si\,\om),\T^{-n}(\ox,\si\,\om))\le\la^{n}\,d(\tau_\om\,\ox,\,\ox)\le \la^n$ and
$\lV \A\rV_\a=\lV A\circ\pi_1\rV_\a=\lV A\rV_\a$, we have that
\begin{align*}
\lV A^*\rV_0&\le \lV A \rV_0+\frac{\lV A\rV_\a}{1-\la^\a}.
\end{align*}
Also if $m:=\min\{\,k\ge 0 \,|\, w_k\ne\nu_k\,\}$
\begin{align*}
 A^*(\om)-A^*(\nu) =&\phantom{-}\sum_{n\ge m-1}\A(\T^{-n}(\tau_\om\,\ox,\si\,\om))-\A(\T^{-n}(\ox,\si\,\om)) \\
    &-\sum_{n\ge m-1}\A(\T^{-n}(\tau_\nu\,\ox,\si\,\nu))-\A(\T^{-n}(\ox,\si\,\nu))\\
\lv A^*(\om)-A^*(\nu)\rv \le &\;2\,\lV A\rV_\a\,\frac{\la^{(m-1)\a}}{1-\la^\a}
= \frac{ 2 \lV A\rV_\a\la^{-\a}}{1-\la^\a}\, d(\om,\nu)^\a.\\
\lV A^*\rV_\a \le &\;\frac{ 2 \lV A\rV_\a}{\la^\a(1-\la^{\a})}.
\end{align*}

{\bf (\ref{lll.3}).} If $u\in\cF$ and $\U:=u\circ \pi_1$ from the formula for $L$ (in the proof of item [\ref{lll.1}])
we have that
\begin{align*}
 L(u\circ T-u) &= \U(\T(\ox,\om))-\U(\T(\ox,\si \om)) \\
               &= u(T\, \ox) - u(T\,\ox) = 0.
\end{align*}

{\bf (\ref{lll.4}).} Item \eqref{lll.4} follows from items \eqref{lll.2} and \eqref{lll.3}.

{\bf (\ref{lll.5}).} We only prove that for any $A\in\cF$, $L^*( L(A)) \in A+\cB$. Write
\begin{align*}
  L^*(L(A)) &= \big(W^*_{A^*}\circ\T-W^*_{A^*} + \A^*\big) (\,\cdot\,, \oom) \\
            &= (W^*_{A^*}\circ\T-W^*_{A^*} + W_{A}\circ\T^{-1}-W_{A}) + A
\end{align*}
Write
\begin{equation}\label{eqB}
B:= W^*_{A^*}\circ\T-W^*_{A^*} + W_{A}\circ\T^{-1}-W_{A}.
\end{equation}
 Since $A, L^*(L(A))\in\cF=\CS$, then
$B\in\CS$.

 Following Bowen, given any $\mu\in\cM(T)$ we construct an associated measure $\nu\in\cM(\T)$.
 Given $z\in C^0(\Sigma\times\Si,\re)$ define $z^\sharp\in C^0(\Sigma,\re)$ as $z^\sharp(x):=z(x,\oom)$.
 We have that
 $$
 \lV (z\circ\T^n)^\sharp\circ T^m-(z\circ\T^{n+m})^\sharp\rv_0\le \text{var}_n z \overset{n}\longto 0,
 $$
 where
\begin{align*}
 \text{var}_n z &= \sup\{\,\lv z(a)-z(b)\rv\;|\;\exists x\in \Sigma,\; a,b\in\T^n(\{x\}\times\Si)\,\}\\
 &\le \sup\{\,| z(a)-z(b)|\;|\;d_{\Sigma\times\Si}(a,b)\le \la^n\;\}\overset{n}\longto 0,
\end{align*}
$$
d_{\Sigma\times\Si}=d_\Sigma\circ(\pi_1,\pi_1) + d_\Si\circ(\pi_2,\pi_2).
$$

Then
$$
\lv \mu((z\circ\T^n)^\sharp)-\mu((z\circ \T^{n+m})^\sharp\rv
=
\lv\mu((z\circ\T^n)^\sharp\circ T^m)-\mu((z\circ \T^{n+m})^\sharp\rv
\le
\text{var}_n z.
$$
Therefore $\mu((z\circ\T^n)^\sharp)$ is a Cauchy sequence in $\re$ and hence the limit
$$
\nu(z):=\lim_n \mu((z\circ\T^n)^\sharp)
$$
exists. By the Riesz representation theorem $\nu$ defines a
Borel probability measure in $\Sigma\times \Si$, and it is invariant because
$$
\nu(z\circ \T) =\lim_n\mu((z\circ \T^{n+1})^\sharp) = \nu(z).
$$

Now let $B:=L^*(L(A))-A$ and $\B:=B\circ\pi_1$. By formula~\eqref{eqB} we have that
$\B$ is a coboundary in $\Sigma\times\Si$.
Since $\pi_1\circ T^n= T^n$ we have that
\begin{align*}
0=\nu(\B) &= \lim_n\mu((\B\circ\T^n)^\sharp) \\
          &= \lim_n\mu(B\circ T^n) \\
          &= \mu(B).
\end{align*}
Since this holds for every $\mu\in\cM(T)$, by lemma~\ref{LCob}.\eqref{lcob.1},
$B\in\cB$ and then \linebreak
$(L^+\circ L)(A+\cB) \subset A+\cB$.

\end{proof}

\bigskip
\bigskip

\begin{thm}\label{MTP}\quad

 There is a residual subset $\cQ\subset\CS$ such that
 if $A\in\cQ$ and $A^*=L(A)$ then
\begin{equation}\label{eTT}
\begin{aligned}
\cM(A)=\{\mu\}\,&, \qquad \A(A)=\text{supp}(\mu), \\
\cM(A^*)=\{\mu^*\}\,&,\qquad  \A(A^*)=\text{supp}(\mu^*).
 \end{aligned}
\end{equation}
In particular
\begin{align*}
I_A(x)>0 \qquad&\text{ if}\qquad x\notin\text{supp}(\mu),\\
I_A(\om)>0 \qquad&\text{ if}\qquad \om\notin\text{supp}(\mu^*),
\end{align*}
and
\begin{align*}
R_A(x)>0 \qquad&\text{ if}\qquad x\notin\text{supp}(\mu) \quad\text{ and }\quad T(x)\in\text{supp}(\mu),\\
R_A(\om)>0 \qquad&\text{ if}\qquad \om\notin\text{supp}(\mu^*)\quad\text{ and }\quad \si(\om)\in\text{supp}(\mu).
\end{align*}

\end{thm}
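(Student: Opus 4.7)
The strategy is to apply Theorem~\ref{TR} on both sides of the involution duality and to intersect the resulting residual sets, using Lemma~\ref{LLL}.\eqref{lll.5} which asserts that $L$ descends to a topological linear isomorphism $\tilde L\colon \cF/\cB\to\cF^*/\cB^*$. First I would take $\cR\subset\cF$ to be the residual set from Theorem~\ref{TR}, and let $\cR^*\subset\cF^*$ be the analogous residual set obtained by running the same proof with $\sigma$ in place of $T$; the hypothesis~\eqref{noanal} holds for $C^\alpha(\Sigma,\re)$, and all the preliminary results (Lemmas~\ref{LAS}, \ref{PR}, \ref{LPL}, Proposition~\ref{AS}, Corollary~\ref{CPR}) transfer symmetrically to the shift $\sigma$. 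The candidate residual set is
$$
\cQ\;:=\;\cR\,\cap\,L^{-1}(\cR^*).
$$

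The main step will be to prove that $L^{-1}(\cR^*)$ is itself residual in $\cF$. My key observation is that both defining conditions of $\cR^*$ depend only on the cohomology class of the potential: any element of $\cB^*$ integrates to zero against every $\sigma$-invariant measure (Lemma~\ref{LCob}.\eqref{lcob.1}), and the Ma\~n\'e action potential is unchanged by coboundaries, so $\cM$ and $\cA$ factor through the quotient $q_2\colon\cF^*\to\cF^*/\cB^*$. Hence $\cR^*$ is $\cB^*$-saturated. Moreover, the approximating sets $\cU^*(\varepsilon):=\{\psi\in\cF^*\mid d_H(\cA(\psi),\cM(\psi))<\varepsilon\}$ whose intersection produces $\cR^*$ are themselves $\cB^*$-saturated, and each of them contains an open dense subset (namely, the neighborhood of the dense set of potentials with unique maximizing measure, furnished by the proof of Theorem~\ref{TR}); saturating this subset along $\cB^*$ gives an open dense $\cB^*$-saturated subset of $\cU^*(\varepsilon)$.

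Because $\cB^*$ is closed (Lemma~\ref{LCob}.\eqref{lcob.2}), the quotient map $q_2$ is continuous and open, so an open dense saturated subset of $\cF^*$ pushes forward to an open dense subset of $\cF^*/\cB^*$; hence $q_2(\cR^*)$ contains a dense $G_\delta$. The homeomorphism $\tilde L^{-1}$ from Lemma~\ref{LLL}.\eqref{lll.5} sends this dense $G_\delta$ to a dense $G_\delta$ of $\cF/\cB$, and pulling back through the continuous open quotient $q_1\colon\cF\to\cF/\cB$ produces a residual subset of $\cF$ which, by $\cB^*$-saturation of $\cR^*$, is exactly $L^{-1}(\cR^*)$. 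Consequently $\cQ=\cR\cap L^{-1}(\cR^*)$ is residual as an intersection of two residual sets. For $A\in\cQ$, the identities~\eqref{eTT} are immediate from $A\in\cR$ and $A^*=L(A)\in\cR^*$, and the four positivity assertions for $I_A,\,I_{A^*},\,R_A,\,R_{A^*}$ follow directly from Corollary~\ref{CPR} applied to $A$ and to $A^*$.

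The step I expect to be most delicate is the passage across the coboundary-quotients: one must verify carefully that each $\cU^*(\varepsilon)$ is indeed $\cB^*$-saturated, that saturated open dense sets of $\cF^*$ push forward to open dense sets in $\cF^*/\cB^*$, and that residuality in the quotient lifts back to the total space (openness of $q_1,q_2$, for which closedness of $\cB,\cB^*$ is essential, is the crucial point here). Once these topological ingredients are in place, Lemma~\ref{LLL}.\eqref{lll.5} automatically converts generic good behavior on the dual side into generic good behavior on the original side, and the theorem follows by intersection.
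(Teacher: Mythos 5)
Your proof follows the same approach as the paper: both observe that $\cR$ and $\cR^*$ are saturated under coboundary translations, invoke Lemma~\ref{LLL}.\eqref{lll.5} to descend $L$ to a homeomorphism $\cF/\cB\to\cF^*/\cB^*$, and conclude that $\cQ=\cR\cap L^{-1}(\cR^*)$ is residual, with the positivity assertions then following from Corollary~\ref{CPR}. You supply more explicit justification for the residuality of $L^{-1}(\cR^*)$ (saturation of the $\cU^*(\e)$, openness of the quotient maps, transport of dense $G_\delta$ sets across the homeomorphism), details the paper compresses into a single sentence, but the underlying argument is the same.
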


\bigskip
\bigskip

\begin{proof}
 Observe that the subset $\cR$ defined in \eqref{eTR} in theorem~\ref{TR}
 is invariant under translations by coboundaries,
 i.e. $\cR=\cR+\cB$. Indeed  if $B=u\circ T-u\in\cB$, we have that
 \begin{gather*}
  \int (A+B)\,d\mu = \int A\,d\mu,\qquad \forall \mu\in\cB,\\
  S_{A+B}(x,y) = S_A(x,y) + B(x)-B(y), \qquad \forall x, y\in\Sigma.
  \end{gather*}
  Then the Aubry set  and the set of minimizing measures are
  unchanged: \linebreak
 $\cM(A+B)=\cM(A)$,\quad $\A(A+B)=\A(A)$.

  For the dynamical system $(\Si,\si)$ let
  $$
  \cR^*=\{\,\psi\in\CS\;|\;\cM(\psi)=\{\mu\},\;\A(\psi)=\text{supp}(\nu)\,\}
  $$
  By theorem~\ref{TR}, the subset $\cR^*$ contains a residual set in $\cF^*=\CS$ and it
  is invariant under translations by coboundaries: $\cR^*=\cR^*+\cB^*$

  By lemma~\ref{LLL} the linear map $L:\cF/\cB\to\cF^*/\cB^*$ is a homeomorphism
  with inverse $L^*$. Then the set $\cQ:=\cR\cap L^{-1}(\cR^*)=\cR\cap(L^*(\cR^*)+\cB)$
  contains a residual subset and satisfies~\eqref{eTT}.

  By Corollary~\ref{CPR} the other properties are automatically satisfied.

\end{proof}

\bigskip

From this last theorem it follows our main result about the generic potential $A$ to be in $\mathbb{G}$.
\bigskip

\bigskip

\end{document}